\newtheorem{theorem}{Theorem}[section]
\newtheorem{lemma}[theorem]{Lemma}
\newtheorem{proposition}[theorem]{Proposition}
\newtheorem{corollary}[theorem]{Corollary}
\newtheorem{definition}[theorem]{Definition}
\title{Quadratic Equations in Three Variables over Gaussian Integers }
\author{Felix Sidokhine}
\begin{document}
\maketitle

\begin{abstract}
We develop an algebraic method of studying of Diophantine quadratic equations in three variables over the ring of Gaussian integers. 
\end{abstract}

\section{Introduction}
The ring of Gaussian integers is a unique factorization domain and even Euclidean, however, it is extremely difficult to reproduce any of the arguments used in the case of rational integers. Many of laminations come at least from two places: the fundamental theorem of arithmetic and the lack of well - ordering of $\mathbb{Z}[i]$. Hence that for Diophantine equations over Gaussian integers should develop its own unique methods of the study maximizing algebraic properties of the ring $\mathbb{Z}[i]$.

\section{The Ring of Gaussian Integers }

\subsection{The Fundamental Domain}

\begin{definition}
A set $D=\{z \in \mathbb{Z}[i] | \frac{-\pi}{4} < Arg(z) \leq \frac{\pi}{4} \}$ of $\mathbb{Z}[i]$  will call a fundamental domain.
\end{definition}

\begin{proposition}
Any element z $\in$ $\mathbb{Z}[i]$   can be represented as $z = i^n z'$, where $z' \in D$, $0 \leq n \leq 3$.
\end{proposition}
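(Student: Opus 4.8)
The plan is to exploit the fact that the four units of $\mathbb{Z}[i]$ are exactly $\{1, i, i^2, i^3\} = \{1, i, -1, -i\}$ and that multiplication by $i^n$ rotates the complex plane counterclockwise by $n\pi/2$. The domain $D$ consists of the Gaussian integers whose argument lies in the half-open arc $(-\pi/4, \pi/4]$ of angular width $\pi/2$; rotating this arc by successive multiples of $\pi/2$ tiles the whole circle, and this tiling is the geometric content behind the representation.

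First I would dispose of the case $z = 0$: since $0 \in D$ and $0 = i^0 \cdot 0$, the claim holds trivially (here one adopts the convention that $0 \in D$, its argument being undefined). For $z \neq 0$, set $\theta = Arg(z) \in (-\pi, \pi]$ and consider the four half-open arcs $I_n = (-\pi/4 + n\pi/2,\, \pi/4 + n\pi/2]$ for $n = 0, 1, 2, 3$, each reduced modulo $2\pi$ into $(-\pi, \pi]$. The key verification is that these four arcs form a partition of $(-\pi, \pi]$: they are pairwise disjoint and their union is all of $(-\pi, \pi]$, with the half-open endpoints guaranteeing that each angle belongs to exactly one arc.

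Granting this partition, there is a unique $n \in \{0, 1, 2, 3\}$ with $\theta \in I_n$. I would then set $z' = i^{-n} z = i^{4-n} z$; since $\mathbb{Z}[i]$ is closed under multiplication and $i^{4-n} \in \mathbb{Z}[i]$, we have $z' \in \mathbb{Z}[i]$. Because multiplication by $i^{-n}$ subtracts $n\pi/2$ from the argument modulo $2\pi$, we get $Arg(z') \equiv \theta - n\pi/2 \pmod{2\pi}$, which lies in $(-\pi/4, \pi/4]$, so $z' \in D$. Rearranging yields $z = i^n z'$ with $0 \leq n \leq 3$, as required; the same argument shows $n$ is unique for $z \neq 0$.

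The main obstacle is the careful bookkeeping at the boundaries. The arc $I_2 = (3\pi/4, 5\pi/4]$ straddles the branch cut at $\pm\pi$, so after reduction it splits as $(3\pi/4, \pi] \cup (-\pi, -3\pi/4]$, and I must confirm that the half-open conventions make the four arcs a genuine partition — in particular that the single angle $\pi$ is included while $-\pi$ is excluded, so that the index $n$ is well-defined. This is the only delicate point; everything else reduces to the rotation geometry of multiplication by $i$ together with the closure of $\mathbb{Z}[i]$ under multiplication by units.
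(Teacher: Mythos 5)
Your proof is correct, and it is precisely the ``direct verification'' the paper alludes to but never writes out: the paper states this proposition with no proof at all, remarking only that one can verify it directly. Your partition of $(-\pi,\pi]$ into the four rotated arcs, the careful handling of the half-open endpoints and the branch cut at $\pm\pi$, and the convention for $z=0$ supply exactly the bookkeeping that the paper omits.
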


\subsection{The Fundamental Theorem of Arithmetic}

\begin{theorem}
$p = R(p) + iI(p)$ is Gaussian prime if and only if it satisfies one of conditions: 
\begin{itemize}
\item $N(p) = R(p)^2  + I(p)^2$  is a prime in  $\mathbb{Z}$
\item $p$  is a prime in $\mathbb{Z}$ and $p \equiv 3 \mod 4$.
\end{itemize}
\end{theorem}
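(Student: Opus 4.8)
The plan is to base the entire argument on the norm $N(z)=z\overline{z}=R(z)^2+I(z)^2$, which is multiplicative, $N(zw)=N(z)N(w)$, and on the fact recorded in the introduction that $\mathbb{Z}[i]$ is a unique factorization domain; in a UFD an element is prime exactly when it is irreducible, so throughout I may check irreducibility in place of primality. The first routine step is to identify the units as precisely the elements of norm $1$, namely $\pm 1,\pm i$: if $uv=1$ then $N(u)N(v)=1$ forces $N(u)=1$, and conversely $N(u)=u\overline{u}=1$ exhibits $\overline{u}$ as an inverse. I would also use the proposition on the fundamental domain to normalize any Gaussian prime up to multiplication by a unit $i^{n}$, so that when such a prime turns out to be an associate of a rational integer the phrase ``prime in $\mathbb{Z}$'' is unambiguous.

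For the \emph{sufficiency} of the two conditions I would argue directly. If $N(p)$ is a rational prime and $p=\alpha\beta$, then $N(\alpha)N(\beta)=N(p)$ is prime in $\mathbb{Z}$, so one of $N(\alpha),N(\beta)$ equals $1$ and the corresponding factor is a unit; hence $p$ is irreducible. If instead $p$ is a rational prime with $p\equiv 3\pmod 4$ and $p=\alpha\beta$ is a factorization into non-units, then $N(\alpha)N(\beta)=N(p)=p^{2}$ with both norms exceeding $1$ forces $N(\alpha)=N(\beta)=p$; writing $\alpha=a+bi$ this says $p=a^{2}+b^{2}$, which is impossible because a square is $\equiv 0$ or $1\pmod 4$, so a sum of two squares is never $\equiv 3\pmod 4$. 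Thus $p$ is irreducible in this case as well.

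For the \emph{necessity}, suppose $p$ is a Gaussian prime. Then $N(p)=p\overline{p}$ is a rational integer greater than $1$; factoring it into rational primes and using that the Gaussian prime $p$ divides the product, I obtain a rational prime $q$ with $p\mid q$, say $q=p\gamma$. Taking norms gives $N(p)N(\gamma)=N(q)=q^{2}$. Since $p$ is not a unit, $N(p)\neq 1$, leaving two possibilities: either $N(p)=q$, which is condition (1), or $N(p)=q^{2}$ and $N(\gamma)=1$, so that $\gamma$ is a unit and $p$ is an associate of the rational prime $q$. In the latter case it remains only to pin down $q$ modulo $4$.

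The decisive and hardest step is therefore to show that the rational primes $q=2$ and $q\equiv 1\pmod 4$ are \emph{not} Gaussian primes, which forces $q\equiv 3\pmod 4$ in the case just isolated and yields condition (2). For $q=2$ I would exhibit the factorization $2=-i\,(1+i)^{2}$, showing $2$ is reducible. For $q\equiv 1\pmod 4$ the real content of the theorem lies in producing a factorization, and here I would invoke the fact that $-1$ is a quadratic residue modulo such a $q$ (via Euler's criterion, or via Wilson's theorem applied to $\left(\tfrac{q-1}{2}\right)!$): choosing $n$ with $q\mid n^{2}+1$ gives $q\mid(n+i)(n-i)$ while $q\nmid n\pm i$, since $\tfrac{n}{q}\pm\tfrac{1}{q}i\notin\mathbb{Z}[i]$, so $q$ divides a product of two factors without dividing either and is reducible. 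This quadratic-residue input is the one place where genuine number theory beyond the formal properties of the norm is needed, and it is the step I expect to be the main obstacle.
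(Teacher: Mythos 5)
Your proof is correct, but there is nothing in the paper to compare it against: the paper states this theorem without any proof, treating it as the classical classification of Gaussian primes (Fermat--Gauss) and using it only as background for the later material on canonical primes and factorization. Your argument is the standard one: norms to get sufficiency (a factor of an element of prime norm must have norm $1$; a rational prime $\equiv 3 \pmod 4$ cannot be a sum of two squares), the divisibility $p \mid N(p)$ to reduce necessity to the two cases $N(p)=q$ or $p$ an associate of a rational prime $q$, and the quadratic-residue fact that $-1$ is a square modulo $q \equiv 1 \pmod 4$ (plus $2 = -i(1+i)^2$) to rule out the primes that split or ramify. You correctly identify the quadratic-residue step as the one piece of genuine number-theoretic input; everything else is formal norm manipulation, and your handling of the associate ambiguity in condition (2) via the paper's fundamental domain (normalizing $p = i^n p'$ with $-\pi/4 < \mathrm{Arg}(p') \leq \pi/4$) is a reasonable way to make the statement literally true, since as written the theorem ignores units (e.g.\ $3i$ is a Gaussian prime satisfying neither condition verbatim). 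No gaps.
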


\begin{definition}
Let $\pi([i])$  denote a set of all primes of the ring of Gaussian integers.
\end{definition}

Our first challenge is to eliminate the associated with the presence of the unit group $U$ of the ring Gaussian integers. To do so, we first claim that any $z$ can be expressed as: $z = i^n z'$ where the argument of $z'$ satisfies a condition $\frac{-\pi}{4} < Arg(z') \leq \frac{\pi}{4}$. One can verify this statement directly. In turn we can claim that the zone $(\frac{-\pi}{4}, \frac{\pi}{4}]$ contains the set of primes $\pi' \subset \pi([i])$ which express uniquely any $z=i^n p_1^{\alpha_1} p_2^{\alpha_2}...p_m^{\alpha_m}$ where $p_i \in \pi'$ and all $p_i$ are distinct primes. This ``uniqueness'' is no longer the uniqueness as understood in algebraic number theory \cite{Neukirch:1999aa} but perfect copy of the uniqueness of factorization that hold in $\mathbb{Z}$ (see \cite{Sidokhine:2013aa}).

\begin{definition}
The set $\pi'$ we shall call the set of canonical prime elements of Gaussian integers.
\end{definition}

\begin{theorem}
Let an element z be a nonzero no unit of Gaussian integers.Then an irreducible factorization $z$, $z=i^n p_1^{\alpha_1} p_2^{\alpha_2}...p_m^{\alpha_m}$, $0 \leq n \leq 3$,  $p_i \in \pi'$, and all $p_i$ are distinct, and this representation is unique up to permutations.
\end{theorem}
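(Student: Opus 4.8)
The plan is to establish the unique factorization theorem for Gaussian integers in the canonical form by reducing it to two separate tasks: first securing \emph{existence} of such a factorization, and then securing \emph{uniqueness up to permutation}. Throughout I would lean on the fact, stated earlier, that $\mathbb{Z}[i]$ is a unique factorization domain in the usual algebraic sense, together with Proposition~(the fundamental domain normalization) which lets me write any nonzero nonunit $z$ as $z = i^n z'$ with $z' \in D$ and $0 \le n \le 3$. The canonical primes $\pi'$ are precisely those irreducibles lying in the fundamental domain $D$, so the whole point is to convert the classical ``unique up to units and order'' statement into a genuinely unique normal form by absorbing all unit ambiguity into the single prefactor $i^n$.

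For existence, I would first invoke the standard UFD property: any nonzero nonunit $z$ factors as a product of irreducibles $z = u \, q_1 q_2 \cdots q_k$ with $u$ a unit. Then I would normalize each irreducible $q_j$ using the Proposition, writing $q_j = i^{m_j} p_j$ where $p_j \in D$; since multiplying by the unit $i^{m_j}$ preserves irreducibility, each $p_j$ is an irreducible lying in $D$, hence $p_j \in \pi'$. Collecting the unit $u$ together with all the factors $i^{m_j}$ gives a single unit, which is again some power of $i$, so the product rewrites as $z = i^n \prod_j p_j$ with $0 \le n \le 3$. Grouping equal canonical primes into powers $p_i^{\alpha_i}$ with distinct $p_i$ yields the claimed form. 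The only subtlety here is checking that the normalized $p_j$ genuinely fall in $\pi'$, i.e. that an associate of an irreducible is again irreducible and that exactly one associate of each irreducible lies in $D$ — this follows because $D$ is a fundamental domain for the action of the unit group $U = \{1, i, -1, -i\}$ on $\mathbb{Z}[i] \setminus \{0\}$, so each nonzero element has a unique associate in $D$.

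For uniqueness, I would suppose two canonical factorizations
\begin{equation}
i^n p_1^{\alpha_1} \cdots p_m^{\alpha_m} = i^{n'} q_1^{\beta_1} \cdots q_s^{\beta_s}
\end{equation}
with all $p_i, q_j \in \pi'$ and each side in the prescribed normal form. Applying classical uniqueness in the UFD gives a bijection between the multisets of irreducible factors on the two sides, matching each $p_i$ with some $q_j$ up to a unit. The key reduction is that two \emph{canonical} primes that are associates must in fact be equal: since each lies in the single fundamental domain $D$ and $D$ contains exactly one representative of each associate class, $p_i$ and its partner $q_j$ coincide, forcing $\alpha_i = \beta_j$ after re-indexing. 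Once the prime parts agree as multisets, the two products $\prod p_i^{\alpha_i}$ and $\prod q_j^{\beta_j}$ are literally equal, and cancelling them leaves $i^n = i^{n'}$; since $0 \le n, n' \le 3$ and $i$ has order $4$, this forces $n = n'$. The factorizations therefore agree up to the permutation matching $p_i$ to $q_j$.

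The main obstacle I expect is entirely concentrated in the normalization bookkeeping rather than in any deep arithmetic: I must verify carefully that $D$ is a strict fundamental domain for the $U$-action, meaning each associate class meets $D$ in exactly one point, with no boundary overlaps. The half-open angular condition $-\pi/4 < \mathrm{Arg}(z) \le \pi/4$ is designed precisely to break ties on the rays at angles $\pm\pi/4$, but because we are in $\mathbb{Z}[i]$ rather than the full complex plane, I would want to confirm that no nonzero lattice point is sent to another lattice point in $D$ by a nontrivial power of $i$, and that the rotation by multiples of $\pi/2$ together with this $90^\circ$-wide half-open sector tiles the punctured plane without gaps. Granting that the representative in $D$ is unique — which is the content of the stated Proposition — the remainder of the argument is a routine transcription of the classical UFD proof into the normalized language.
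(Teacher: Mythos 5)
Your proof is correct and takes essentially the approach the paper intends: the paper states this theorem without proof (deferring to its cited factorization reference), but the surrounding discussion outlines exactly your strategy of combining the classical UFD property of $\mathbb{Z}[i]$ with the fundamental-domain normalization $z = i^n z'$ so that all unit ambiguity is absorbed into the single prefactor $i^n$. Your key verification --- that the half-open sector $-\pi/4 < \mathrm{Arg}(z) \le \pi/4$ meets each associate class in exactly one point, because every nontrivial power of $i$ rotates this sector entirely off itself --- is precisely the detail needed to make both the existence and the uniqueness halves go through.
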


\subsection{Multiplicative Properties}

\begin{definition}
Gaussian integer $\alpha$ is called even if $\alpha \equiv 0 \mod (1+ i)$. 
\end{definition}

\begin{proposition}
Gaussian integer $\alpha$ is even if and only if the real $R(\alpha)$ and imaginary $I(\alpha)$ parts of $\alpha$ satisfy the condition  $R(\alpha) + I(\alpha) \equiv 0 \mod 2$.
\end{proposition}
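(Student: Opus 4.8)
The plan is to work directly from the definition of evenness, namely that $\alpha$ is even precisely when $(1+i) \mid \alpha$, and to translate this divisibility into a congruence on the real and imaginary parts by explicit computation. Throughout I write $a = R(\alpha)$ and $b = I(\alpha)$, so that $\alpha = a + bi$ with $a, b \in \mathbb{Z}$.

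For the forward implication, I would assume $(1+i) \mid \alpha$ and write $\alpha = (1+i)(c + di)$ for some $c + di \in \mathbb{Z}[i]$. Expanding the product gives
\begin{equation}
(1+i)(c+di) = (c - d) + (c + d)i,
\end{equation}
so that $a = c - d$ and $b = c + d$. Adding these relations yields $a + b = 2c$, which is manifestly even; hence $R(\alpha) + I(\alpha) \equiv 0 \pmod{2}$.

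For the reverse implication, I would start from the hypothesis $a + b \equiv 0 \pmod{2}$, which is equivalent to $a$ and $b$ having the same parity, and then produce the quotient explicitly. Motivated by the relations above, set $c = (a+b)/2$ and $d = (b-a)/2$. The hypothesis guarantees $a + b$ is even, and since $a \equiv b \pmod{2}$ forces $b - a$ to be even as well, both $c$ and $d$ lie in $\mathbb{Z}$, so $c + di \in \mathbb{Z}[i]$. A direct check then shows $(1+i)(c+di) = (c-d)+(c+d)i = a + bi = \alpha$, whence $(1+i) \mid \alpha$ and $\alpha$ is even.

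I expect no serious obstacle, since the statement reduces to a one-line computation once the correct quotient is written down; the only point requiring a moment's care is verifying in the reverse direction that the proposed $c$ and $d$ are genuinely integers, which is exactly where the parity hypothesis $R(\alpha) + I(\alpha) \equiv 0 \pmod{2}$ is consumed. As an alternative I could argue through the norm: because $N(\alpha) = a^2 + b^2 \equiv a + b \pmod{2}$ and $1+i$ is a Gaussian prime with $N(1+i) = 2$ (by the factorization theorem above), evenness of $\alpha$ is equivalent to $2 \mid N(\alpha)$, hence to $a + b$ being even. This route is heavier, however, and the direct computation above is preferable.
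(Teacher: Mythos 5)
Your proof is correct: the forward direction via expanding $(1+i)(c+di)=(c-d)+(c+d)i$ and the converse via the explicit quotient $c=(a+b)/2$, $d=(b-a)/2$ (whose integrality is exactly where the parity hypothesis is used) together establish the equivalence cleanly. The paper states this proposition without any proof at all, so your argument supplies the missing verification in the most natural and elementary way; the norm-based alternative you sketch is also valid but, as you note, unnecessary here.
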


Let $E = \{\alpha \in \mathbb{Z}[i] | R(\alpha) + I(\alpha) \equiv 0 \mod 2\}$ be the set of all even Gaussian integers.

\begin{theorem}
Let subsets $E_0=\{\alpha \in E | R(\alpha) \equiv 0 \mod 2\}$ and $E_I=\{\alpha \in E | R(\alpha)\equiv 1 \mod 2\}$ then $E=E_0 \cup E_I$  and $E_0 \cap E_I= \emptyset$.
\end{theorem}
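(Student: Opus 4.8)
The plan is to treat this as a partition statement driven entirely by the parity dichotomy of rational integers. Since both $E_0$ and $E_I$ are defined as subsets of $E$ carved out by a condition on $R(\alpha)$ modulo $2$, the whole argument reduces to showing that this condition is both exhaustive (every element of $E$ lands in one of the two pieces) and mutually exclusive (no element lands in both). I would carry out these two verifications in order.

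For the union $E = E_0 \cup E_I$: the inclusion $E_0 \cup E_I \subseteq E$ is immediate from the definitions, as each of $E_0$ and $E_I$ is specified as a subset of $E$. For the reverse inclusion, I would take any $\alpha \in E$. Its real part $R(\alpha)$ is a rational integer, and every rational integer satisfies exactly one of $R(\alpha) \equiv 0 \mod 2$ or $R(\alpha) \equiv 1 \mod 2$. The first case places $\alpha \in E_0$, the second places $\alpha \in E_I$; either way $\alpha \in E_0 \cup E_I$. Hence $E \subseteq E_0 \cup E_I$, and equality follows.

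For the disjointness $E_0 \cap E_I = \emptyset$: I would suppose for contradiction that some $\alpha$ lies in both. Then $R(\alpha) \equiv 0 \mod 2$ and $R(\alpha) \equiv 1 \mod 2$ hold simultaneously, forcing $0 \equiv 1 \mod 2$, which is false. So no such $\alpha$ exists and the intersection is empty.

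I expect no genuine obstacle here; the statement is a direct consequence of the fact that reduction modulo $2$ partitions $\mathbb{Z}$ into the two residue classes $0$ and $1$. The only point worth recording---and which could be folded in as a remark---is the refined description of the two pieces: since membership in $E$ forces $R(\alpha) + I(\alpha) \equiv 0 \mod 2$, within $E$ the parts $R(\alpha)$ and $I(\alpha)$ always share the same parity. Thus $E_0$ consists exactly of those even Gaussian integers with both parts even, while $E_I$ consists of those with both parts odd, which makes the dichotomy transparent and confirms that the split is indeed a partition of $E$.
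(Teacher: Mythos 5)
Your proof is correct. The paper states this theorem without any proof, treating it as immediate; your argument---the parity dichotomy of $R(\alpha)$ in $\mathbb{Z}$ for exhaustiveness, and the impossibility of $0 \equiv 1 \bmod 2$ for disjointness---is exactly the evident verification one would supply, and your closing remark that within $E$ the parts $R(\alpha)$ and $I(\alpha)$ share the same parity (so $E_0$ is ``both parts even'' and $E_I$ is ``both parts odd'') is accurate and consistent with how the paper uses these sets later.
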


\begin{theorem}
The subsets $E_0$ and $E_I$ of the set $E$ have the following multiplicative properties
$E_0  \cdot E_0   \subset E_0$  ; $E_0 \cdot E_I  \subset E_0$ ;  $E_I \cdot E_I  \subset E_0$.
\end{theorem}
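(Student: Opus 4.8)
The plan is to reduce all three inclusions to a single elementary parity computation on the coordinates of a product. First I would translate the defining congruences into explicit conditions on real and imaginary parts. Writing $\alpha = a + bi$, membership $\alpha \in E$ means $a + b \equiv 0 \pmod 2$, i.e. $a \equiv b \pmod 2$; combined with the split defining $E_0$ and $E_I$ this gives $\alpha \in E_0 \iff a \equiv b \equiv 0 \pmod 2$ and $\alpha \in E_I \iff a \equiv b \equiv 1 \pmod 2$. So the two classes are exactly ``both parts even'' and ``both parts odd,'' which is the observation that makes the case analysis uniform.

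Next, for $\alpha = a+bi$ and $\beta = c+di$ I would invoke the multiplication rule $\alpha\beta = (ac - bd) + (ad + bc)i$, so that $R(\alpha\beta) = ac - bd$. Since a product of two even elements again lies in $E$ (the set $E$ is closed under multiplication, as $1+i$ is prime and $\alpha,\beta \equiv 0 \pmod{1+i}$ forces $\alpha\beta \equiv 0 \pmod{1+i}$), it suffices to determine the parity of $R(\alpha\beta) = ac - bd$ in order to decide between $E_0$ and $E_I$.

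Then I would run the three cases using the characterization above. For $E_0 \cdot E_0$ all of $a,b,c,d$ are even, so $ac - bd$ is even. For $E_0 \cdot E_I$ we have $a,b$ even while $c,d$ are odd, so both $ac$ and $bd$ are even and their difference is even. For $E_I \cdot E_I$ all four coordinates are odd, so $ac$ and $bd$ are each odd and $ac - bd$ is again even. In every case $R(\alpha\beta)$ is even, hence $\alpha\beta \in E_0$, which yields all three inclusions simultaneously.

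As for the main obstacle: there is essentially no deep difficulty here, and I expect the only point requiring genuine care to be the preliminary step of passing from the abstract divisibility definition of $E_0$ and $E_I$ to the concrete parity statements on $a,b,c,d$ (via the Proposition characterizing evenness by $R(\alpha)+I(\alpha)\equiv 0 \pmod 2$), together with the accompanying remark that $E$ is multiplicatively closed so that the product is guaranteed to lie in $E$ before one even inspects its real part. Once those two facts are in place, the three inclusions follow immediately from the identity $R(\alpha\beta) = ac - bd$ and elementary parity of integer products.
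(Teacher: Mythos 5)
Your proof is correct. Note that the paper actually states this theorem without any proof (like most of the structural results in Section 2), so there is nothing to compare against; your reduction to coordinate parities --- $E_0$ is ``both parts even,'' $E_I$ is ``both parts odd,'' then $R(\alpha\beta) = ac - bd$ is even in all three cases --- is the natural elementary verification and it is complete. Two small remarks: the primality of $1+i$ plays no role in the closure step, since $(1+i) \mid \alpha$ alone already gives $(1+i) \mid \alpha\beta$; and you could bypass the closure-in-$E$ step entirely by also computing $I(\alpha\beta) = ad + bc$, which is even in all three cases, so that $\alpha\beta$ lands in $E_0$ directly from the characterization ``both parts even.''
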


\begin{lemma}\label{lem1}
Let Gaussian integer $\alpha$ belong to E. Then
\begin{itemize}
\item if $\alpha \in E_0$ then $R(\alpha^2) \equiv 0 \mod 4$ and $I(\alpha^2) \equiv 0 \mod 4$
\item if $\alpha \in E_I$  then $R(\alpha^2) \equiv 0 \mod 4$ and $I(\alpha^2) \equiv 2 \mod 4$
 \end{itemize}
\end{lemma}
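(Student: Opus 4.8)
The plan is to reduce the statement to an elementary parity computation on the real and imaginary parts. Writing $\alpha = a + bi$ with $a = R(\alpha)$ and $b = I(\alpha)$, I would first record the explicit formula $\alpha^2 = (a^2 - b^2) + 2ab\,i$, so that $R(\alpha^2) = a^2 - b^2$ and $I(\alpha^2) = 2ab$. Everything then follows from tracking these two integers modulo $4$.

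Next I would invoke the characterization of even Gaussian integers from the Proposition above: $\alpha \in E$ exactly when $a + b \equiv 0 \pmod 2$, i.e.\ $a$ and $b$ have the same parity. The partition $E = E_0 \cup E_I$ is precisely the split according to whether this common parity is even or odd, so the two bullet points correspond to the two cases $a \equiv b \equiv 0 \pmod 2$ and $a \equiv b \equiv 1 \pmod 2$, and I would organize the argument around exactly these two cases.

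In the case $\alpha \in E_0$, both $a$ and $b$ are even, say $a = 2s$ and $b = 2t$; substituting gives $R(\alpha^2) = 4(s^2 - t^2)$ and $I(\alpha^2) = 8st$, so both are divisible by $4$ (indeed the imaginary part is divisible by $8$). In the case $\alpha \in E_I$, both $a$ and $b$ are odd, hence $ab$ is odd, whence $I(\alpha^2) = 2ab \equiv 2 \pmod 4$, as required.

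The only part requiring a touch more than direct substitution is $R(\alpha^2) \equiv 0 \pmod 4$ in the $E_I$ case. Here I would use the standard fact that the square of any odd integer is $\equiv 1 \pmod 8$; consequently $a^2 - b^2 \equiv 1 - 1 \equiv 0 \pmod 8$, which in particular yields $R(\alpha^2) \equiv 0 \pmod 4$. This odd-square-mod-$8$ step is the one genuinely non-formal ingredient, but it is itself a one-line parity argument, so I do not anticipate any real obstacle. The lemma is essentially a quantitative refinement of the multiplicative relation $E_I \cdot E_I \subset E_0$ already established, pinned down now to specific residues modulo $4$.
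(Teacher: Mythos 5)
Your proof is correct. The paper actually states this lemma without any proof, and your argument---writing $\alpha = a+bi$, using the parity characterization to split into the cases $a\equiv b\equiv 0$ and $a\equiv b\equiv 1 \pmod 2$, and computing $R(\alpha^2)=a^2-b^2$, $I(\alpha^2)=2ab$ modulo $4$ (the odd-square-mod-$8$ step is even stronger than needed, since odd squares are already $\equiv 1 \pmod 4$)---is precisely the direct verification the paper leaves to the reader.
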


\begin{definition}
Gaussian integer $\alpha$ is called odd if  $\alpha \nequiv 0 \mod (1 + i)$.
\end{definition}

\begin{proposition}
Gaussian integer $\alpha$ is odd if and only if $R(\alpha)+I(\alpha)\equiv1 \mod 2$. 
\end{proposition}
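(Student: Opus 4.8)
The plan is to derive this proposition as the direct logical complement of the earlier characterization of even Gaussian integers. By the definition immediately preceding this statement, $\alpha$ is odd precisely when $\alpha \nequiv 0 \mod (1+i)$, which is by definition the negation of the condition that $\alpha$ is even. So the entire content of the proposition amounts to transporting this negation through the biconditional already established in the even case.

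First I would invoke the earlier proposition, which asserts that $\alpha$ is even if and only if $R(\alpha) + I(\alpha) \equiv 0 \mod 2$. Negating both sides of that biconditional yields: $\alpha$ is odd if and only if $R(\alpha) + I(\alpha) \nequiv 0 \mod 2$. The second step is to observe that modulo $2$ there are exactly two residue classes, namely $0$ and $1$, so the condition $R(\alpha) + I(\alpha) \nequiv 0 \mod 2$ is equivalent to $R(\alpha) + I(\alpha) \equiv 1 \mod 2$. Chaining the two equivalences gives exactly the claimed statement.

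Alternatively, for a self-contained argument I would proceed directly from divisibility by $1+i$. Writing $\alpha = a + bi$ with $a = R(\alpha)$ and $b = I(\alpha)$, I would compute the quotient $\alpha/(1+i) = \tfrac{1}{2}\big((a+b) + (b-a)i\big)$. This is a Gaussian integer exactly when both $a+b$ and $b-a$ are even; since $a+b$ and $b-a$ always share the same parity, this reduces to the single condition $a + b \equiv 0 \mod 2$. Hence $\alpha$ fails to be divisible by $1+i$, that is, $\alpha$ is odd, precisely when $a + b \equiv 1 \mod 2$.

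I expect no genuine obstacle here; the only point requiring a moment of care is the trivial but essential remark that modulo $2$ the negation of $\equiv 0$ is $\equiv 1$, which is what legitimizes rewriting the complemented condition in the clean form stated in the proposition.
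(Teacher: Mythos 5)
Your proposal is correct, but note that the paper itself offers no proof of this proposition at all: it is stated bare, immediately after the definition of oddness, just as the parallel proposition characterizing even Gaussian integers is stated without proof. So there is no argument of the paper's to compare against; your write-up actually fills a gap the author left open. Of your two routes, the first (negating the biconditional for evenness and using that mod $2$ the negation of $\equiv 0$ is $\equiv 1$) is logically sound but inherits its force from the earlier even-characterization, which the paper also never proves; on its own it would be circular to cite it as the foundation. Your second, self-contained argument is therefore the one that does the real work: from $\alpha = a+bi$ you get
\begin{equation*}
\frac{\alpha}{1+i} \;=\; \frac{(a+b)+(b-a)i}{2},
\end{equation*}
which lies in $\mathbb{Z}[i]$ precisely when $a+b$ and $b-a$ are both even, and since $(a+b)-(b-a)=2a$ forces these two integers to have the same parity, divisibility of $\alpha$ by $1+i$ is equivalent to $a+b\equiv 0 \mod 2$; taking the contrapositive gives the proposition, and as a byproduct it also proves the paper's unproved even-characterization. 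This direct computation is exactly the right level of argument here, and I see no gap in it.
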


Let $O=\{\alpha\in \mathbb{Z}[i] | R(\alpha)+I(\alpha)\equiv1 \mod 2\}$ is the set of all odd Gaussian integers.

\begin{theorem}
Let subsets $O_0=\{ \alpha \in O | R(\alpha) \equiv 0 \mod 2\}$ and $O_I=\{ \alpha \in O | R(\alpha) \equiv 1 \mod 2\}$ then  $O=O_0\cup O_I$  and $O_0 \cap O_I = \emptyset$.
\end{theorem}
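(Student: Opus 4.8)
The plan is to mirror the argument already used for the even integers, namely the theorem asserting $E = E_0 \cup E_I$ with $E_0 \cap E_I = \emptyset$, since the present statement is its exact analogue for the odd set $O$. The decomposition rests on nothing more than the fact that the real part $R(\alpha)$ of any Gaussian integer is an ordinary rational integer, and every rational integer is congruent to exactly one of $0$ or $1$ modulo $2$. Accordingly I would organize the proof as two inclusions establishing the union, followed by a short contradiction for disjointness.

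First I would establish the inclusion $O_0 \cup O_I \subseteq O$. This is immediate from the definitions: both $O_0$ and $O_I$ are defined as subsets of $O$, so their union is contained in $O$. No use of the odd condition $R(\alpha) + I(\alpha) \equiv 1 \mod 2$ is needed here beyond recalling that it is the defining membership condition of $O$. Next I would prove the reverse inclusion $O \subseteq O_0 \cup O_I$: taking an arbitrary $\alpha \in O$, its real part $R(\alpha)$ lies in $\mathbb{Z}$, so the parity dichotomy gives either $R(\alpha) \equiv 0 \mod 2$ or $R(\alpha) \equiv 1 \mod 2$. In the first case $\alpha$ satisfies the defining condition of $O_0$, in the second that of $O_I$; in either case $\alpha \in O_0 \cup O_I$. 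Combining the two inclusions yields $O = O_0 \cup O_I$.

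Finally, for disjointness, I would suppose toward a contradiction that some $\alpha \in O_0 \cap O_I$. Then $R(\alpha) \equiv 0 \mod 2$ and $R(\alpha) \equiv 1 \mod 2$ hold simultaneously, forcing $0 \equiv 1 \mod 2$, which is false. Hence $O_0 \cap O_I = \emptyset$.

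I do not expect any genuine obstacle here: the only point requiring the slightest care is to confirm that the splitting condition is imposed on $R(\alpha)$ alone, and not on the odd condition $R(\alpha) + I(\alpha)$, so that the argument is literally the parity classification of an integer applied fiberwise over $O$. The identical reasoning would classify any subset of $\mathbb{Z}[i]$ by the parity of its real part, with membership in $O$ simply carried along unchanged; this is why the proof can be expected to read verbatim like that of the corresponding theorem for $E$.
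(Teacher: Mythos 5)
Your proof is correct: the paper states this theorem without giving any proof at all, and your argument (the parity dichotomy of the rational integer $R(\alpha)$, applied fiberwise over $O$, plus the trivial observation that the two congruence conditions are mutually exclusive) is exactly the reasoning the statement requires. Nothing is missing, and there is no alternative route to compare against.
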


\begin{theorem}
The subsets $O_0$ and $O_I$ of the set $O$ have the following multiplicative properties
$O_0 \cdot O_0  = O_I$; $O_0 \cdot O_I  = O_0$; $O_I \cdot O_I  = O_I$.
\end{theorem}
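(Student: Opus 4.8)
The plan is to first translate membership in $O_0$ and $O_I$ into explicit parity conditions on the two coordinates. Since every $\alpha \in O$ satisfies $R(\alpha)+I(\alpha)\equiv 1 \bmod 2$, the real and imaginary parts have opposite parities; writing $\alpha = a+bi$, this gives $O_0 = \{a+bi : a\equiv 0,\ b\equiv 1 \bmod 2\}$ and $O_I = \{a+bi : a\equiv 1,\ b\equiv 0 \bmod 2\}$. Everything then reduces to tracking parities through the product formula $\alpha\beta = (ac-bd) + (ad+bc)i$, where $\beta = c+di$.

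Next I would dispatch the three forward inclusions $O_0\cdot O_0 \subseteq O_I$, $O_0\cdot O_I \subseteq O_0$, and $O_I\cdot O_I \subseteq O_I$ by direct parity checks on $R(\alpha\beta)=ac-bd$ and $I(\alpha\beta)=ad+bc$. I expect the bookkeeping to be cleanest if I first record the observation that the map $\phi(\alpha) := I(\alpha)\bmod 2$ is a homomorphism from $(O,\cdot)$ to $(\mathbb{Z}/2,+)$: using oddness in the form $a\equiv b+1$ and $c\equiv d+1$, one computes $ad+bc \equiv (b+1)d + b(d+1) \equiv b+d \bmod 2$, which is exactly $\phi(\alpha)+\phi(\beta)$. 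Since $O_0 = \phi^{-1}(1)$ and $O_I = \phi^{-1}(0)$, the three forward inclusions become the identities $1+1=0$, $1+0=1$, and $0+0=0$ in $\mathbb{Z}/2$.

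The real content is that the statement asserts \emph{equalities}, not merely the inclusions found in the even case, so I must also prove the reverse inclusions by exhibiting factorizations realizing each class. For $O_0\cdot O_I \supseteq O_0$ and $O_I\cdot O_I \supseteq O_I$ this is immediate, because $1\in O_I$: any $\gamma$ in the target factors trivially as $1\cdot\gamma$. The one genuinely non-obvious direction is $O_I \subseteq O_0\cdot O_0$, which cannot be handled this way since $O_0$ contains no multiplicative identity. Here I would use that the units $i$ and $-i$ both lie in $O_0$ and satisfy $i\cdot(-i)=1$: for any $\gamma\in O_I$ write $\gamma = i\cdot(-i\gamma)$, where $i\in O_0$, and where $-i\gamma \in O_0$ by the already-established inclusion $O_0\cdot O_I \subseteq O_0$ applied to $-i\in O_0$ and $\gamma\in O_I$.

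I expect this last reverse inclusion to be the main obstacle, while the forward inclusions are routine parity arithmetic. For this reason I would present the forward inclusions first, so that the factorization argument for $O_I \subseteq O_0\cdot O_0$ may legitimately invoke $-i\gamma\in O_0$ from an already-proved step.
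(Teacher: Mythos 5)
Your proof is correct, and in fact there is nothing in the paper to compare it against: the theorem is stated there without any proof, as are the other multiplicative-property results in Section 2. On its own merits your argument is complete. The translation of $O_0$ and $O_I$ into coordinate parities ($a$ even, $b$ odd versus $a$ odd, $b$ even) is right, the verification that $\phi(\alpha)=I(\alpha)\bmod 2$ is a monoid homomorphism $(O,\cdot)\to(\mathbb{Z}/2,+)$ is a clean way to get all three forward inclusions at once, and --- most importantly --- you correctly identified the point that separates this theorem from the preceding one about $E_0$ and $E_I$: here the paper claims \emph{equalities}, not inclusions, so the reverse containments must be exhibited. Your handling of them is sound: $1\in O_I$ disposes of $O_0\subseteq O_0\cdot O_I$ and $O_I\subseteq O_I\cdot O_I$, and the factorization $\gamma=i\cdot(-i\gamma)$ with $i,-i\in O_0$, together with the already-proved inclusion $O_0\cdot O_I\subseteq O_0$, disposes of $O_I\subseteq O_0\cdot O_0$. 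The ordering of steps you propose (forward inclusions first, so the last factorization may cite them) is logically necessary and you respected it. If anything, your write-up supplies a proof the paper owes its reader, and it also implicitly flags a stylistic inconsistency in the paper, which writes $\subset$ for the $E$-theorem but $=$ here without comment; your argument shows the equalities are genuinely attainable because $O$ contains units ($1$, $i$, $-i$) in both classes' reach, whereas no analogous argument is available for $E_0$ and $E_I$, which contain no units at all.
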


\begin{corollary}
The set $O_I=\{ \alpha \in O | R(\alpha) \equiv 1 \mod 2\}$  is a multiplicative monoid. 
\end{corollary}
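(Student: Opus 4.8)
The plan is to verify directly the three defining properties of a multiplicative monoid for the set $O_I$: closure under the ring multiplication, associativity of that operation, and the presence of a two-sided identity element inside $O_I$. Since a monoid demands no inverses, these are the only conditions to address.

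First I would establish closure, which is exactly the content of the immediately preceding theorem: it asserts $O_I \cdot O_I = O_I$, and in particular $O_I \cdot O_I \subset O_I$. Hence the multiplication of $\mathbb{Z}[i]$ restricts to a well-defined binary operation on $O_I$, with no product escaping the set.

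Next, associativity requires no fresh argument. Multiplication in $\mathbb{Z}[i]$ is associative, and the restriction of an associative operation to a subset closed under that operation is again associative; the closure just established is precisely what licenses this step.

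The only remaining point, and the one genuinely worth checking, is that the multiplicative identity $1$ of $\mathbb{Z}[i]$ lies in $O_I$. I would confirm this from its real and imaginary parts: $R(1) = 1$ and $I(1) = 0$ give $R(1) + I(1) = 1 \equiv 1 \pmod{2}$, so $1 \in O$, and moreover $R(1) = 1 \equiv 1 \pmod{2}$, so in fact $1 \in O_I$. As $1 \cdot \alpha = \alpha \cdot 1 = \alpha$ for every $\alpha$, this element serves as the two-sided identity, completing the monoid axioms. The hard part, such as it is, is merely recognizing that the substantive work lies in the closure theorem cited above; once that is invoked, only these routine verifications remain, and no deeper obstacle arises.
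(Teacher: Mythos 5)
Your proof is correct and matches the paper's intended argument: the paper states this as a corollary of the preceding theorem precisely because $O_I \cdot O_I = O_I$ supplies closure, with associativity inherited from $\mathbb{Z}[i]$ and the identity check $R(1)=1$, $I(1)=0$ showing $1 \in O_I$. No gaps; your verification of the identity element is the only detail the paper leaves implicit, and you handle it correctly.
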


\begin{theorem}
The set $O^I=\{\alpha \in O | R(\alpha) \equiv 1 \mod 4\}$  is a multiplicative submonoid of $O_I$.
\end{theorem}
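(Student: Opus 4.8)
The plan is to verify directly the three defining properties of a submonoid: that $O^I$ sits inside $O_I$, that it contains the multiplicative identity, and that it is closed under multiplication. Associativity need not be checked, since it is inherited from the ambient monoid $O_I$ furnished by the preceding corollary.

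First I would establish the inclusion $O^I \subseteq O_I$. If $\alpha \in O^I$ then by definition $\alpha \in O$ and $R(\alpha) \equiv 1 \pmod 4$; since any rational integer congruent to $1$ modulo $4$ is in particular congruent to $1$ modulo $2$, we get $R(\alpha) \equiv 1 \pmod 2$, which is exactly the condition placing $\alpha$ in $O_I$. The identity is even cheaper: $1 = 1 + 0 \cdot i$ satisfies $R(1) + I(1) = 1 \equiv 1 \pmod 2$ so $1 \in O$, and $R(1) = 1 \equiv 1 \pmod 4$, so $1 \in O^I$.

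The substantive step is closure. Take $\alpha = a_1 + b_1 i$ and $\beta = a_2 + b_2 i$ in $O^I$. The key preliminary observation is that both imaginary parts are even: since $\alpha \in O_I$ we have $R(\alpha) + I(\alpha) \equiv 1 \pmod 2$ with $R(\alpha) = a_1$ odd, which forces $b_1 = I(\alpha)$ to be even, and likewise $b_2$ is even. Writing out the product gives $R(\alpha\beta) = a_1 a_2 - b_1 b_2$. Because $a_1 \equiv a_2 \equiv 1 \pmod 4$ we have $a_1 a_2 \equiv 1 \pmod 4$, while $b_1$ and $b_2$ both even gives $b_1 b_2 \equiv 0 \pmod 4$. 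Hence $R(\alpha\beta) \equiv 1 - 0 \equiv 1 \pmod 4$. Finally, the theorem $O_I \cdot O_I = O_I$ already places $\alpha\beta$ in $O$, so the refined congruence on its real part yields $\alpha\beta \in O^I$, completing closure.

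I expect no serious obstacle here; the only point requiring care is the parity argument that the imaginary part of an element of $O_I$ is even, which is what converts the term $b_1 b_2$ into a multiple of $4$ and thereby preserves the finer congruence $R \equiv 1 \pmod 4$. The multiplicative table for $O_0$ and $O_I$ established earlier does the bookkeeping that keeps the product odd with odd real part; the refinement modulo $4$ is the genuinely new content of this statement.
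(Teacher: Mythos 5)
Your proof is correct: the inclusion $O^I \subseteq O_I$, the identity $1 \in O^I$, and the closure computation (with $a_1 a_2 \equiv 1 \bmod 4$ and $b_1 b_2 \equiv 0 \bmod 4$ since the imaginary parts of elements of $O_I$ are forced to be even) are all sound, and invoking $O_I \cdot O_I = O_I$ to place the product back in $O$ closes the argument. The paper states this theorem without any proof, so there is nothing to compare against; your direct verification is precisely the argument the paper leaves implicit.
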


\begin{corollary}
Any element $\alpha \in O$ can be represented a unique way $\alpha=i^n \beta$ where $\beta\in O^I$.
\end{corollary}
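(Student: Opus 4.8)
\section*{Proof proposal}

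The plan is to treat this as a statement about the free action of the unit group $U=\{1,i,i^2,i^3\}$ on the set $O$ of odd Gaussian integers, and to show that every orbit meets $O^I$ in exactly one point. First I would record how multiplication by $i$ acts on coordinates: writing $\alpha=a+bi$, one has $i\alpha=-b+ai$, so passing from $\alpha$ to $i^m\alpha$ for $m=0,1,2,3$ produces real parts $a,\,-b,\,-a,\,b$ respectively. Since $\alpha\neq 0$ (note $0\notin O$), these four rotations $i^0\alpha,i^1\alpha,i^2\alpha,i^3\alpha$ are pairwise distinct, so each orbit has exactly four elements. Hence representing $\alpha=i^n\beta$ with $\beta\in O^I$ amounts to locating the elements of $O^I$ inside the orbit $\{i^m\alpha : 0\le m\le 3\}$, because $\beta=i^{-n}\alpha$ ranges over precisely this orbit as $n$ ranges over $\{0,1,2,3\}$.

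For existence, I would use that $\alpha\in O$ forces exactly one of $a,b$ to be odd. If $a$ is odd (so $\alpha\in O_I$) the four real parts $a,-b,-a,b$ contain exactly two odd values, namely $a$ and $-a$; if $b$ is odd (so $\alpha\in O_0$) the two odd real parts are $-b$ and $b$. In either case the two odd real parts form a pair $\{c,-c\}$ with $c$ an odd rational integer, and I would invoke the elementary fact that for odd $c$ exactly one of $c,-c$ is congruent to $1\pmod 4$. The remaining two real parts are even and so cannot be $\equiv 1\pmod 4$. Therefore exactly one rotation $i^{m_0}\alpha$ has real part $\equiv 1\pmod 4$; by definition this $\beta:=i^{m_0}\alpha$ lies in $O^I$ (it is odd as a rotation of an odd element, and has $R(\beta)\equiv 1\pmod 4$), and setting $n\equiv -m_0\pmod 4$ gives $\alpha=i^n\beta$.

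For uniqueness, suppose $\alpha=i^{n_1}\beta_1=i^{n_2}\beta_2$ with $\beta_1,\beta_2\in O^I$. Then $\beta_1=i^{\,n_2-n_1}\beta_2$, so both $\beta_1$ and $\beta_2$ lie in the single orbit of $\alpha$ and both belong to $O^I$. The existence argument already showed this orbit contains a unique element with real part $\equiv 1\pmod 4$, forcing $\beta_1=\beta_2$ and hence $i^{n_1}=i^{n_2}$, i.e. $n_1\equiv n_2\pmod 4$. The only genuinely delicate point is the mod-$4$ bookkeeping of the previous paragraph, namely ensuring that among the four rotations precisely one real part hits the residue $1\pmod 4$; this reduces to the clean parity observation that $\{c,-c\}$ covers exactly one of the residues $1,3\pmod 4$ when $c$ is odd, and everything else is formal.
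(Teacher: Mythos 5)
Your proof is correct. The paper states this corollary without giving any proof, and your orbit argument---tracking the real parts $a,\,-b,\,-a,\,b$ of the four rotations $i^m\alpha$, noting that the two odd ones form a pair $\{c,-c\}$ of which exactly one is $\equiv 1 \pmod 4$---is precisely the direct verification that the paper leaves implicit, so it fills the gap rather than diverging from the paper's route.
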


\begin{lemma}\label{lem2}
Let Gaussian integer $\alpha$ belong to $O$. Then
\begin{itemize} 
\item if  $\alpha \in O_0$ then $R(\alpha^2)\equiv -1 \mod 4$ and $I(\alpha^2) \equiv 0 \mod 4$
\item if  $\alpha \in O_I$  then $R(\alpha^2)\equiv   1 \mod 4$ and $I(\alpha^2) \equiv 0 \mod 4$
\end{itemize}
\end{lemma}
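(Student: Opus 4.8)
The plan is to reduce everything to a direct parity computation on the real and imaginary parts, exactly as for Lemma~\ref{lem1}. Writing $\alpha = R(\alpha) + i\,I(\alpha)$ and expanding the square gives $R(\alpha^2) = R(\alpha)^2 - I(\alpha)^2$ and $I(\alpha^2) = 2\,R(\alpha)\,I(\alpha)$, so the whole statement becomes a question about these two integer-valued expressions modulo $4$. Since $\alpha \in O$, the defining congruence $R(\alpha) + I(\alpha) \equiv 1 \mod 2$ forces exactly one of $R(\alpha), I(\alpha)$ to be even and the other odd; the subsets $O_0$ and $O_I$ simply record which of the two is the even one. I would therefore split into these two cases, which by the earlier theorem asserting $O = O_0 \cup O_I$ with $O_0 \cap O_I = \emptyset$ are exhaustive and disjoint.

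First I would treat $\alpha \in O_0$, where $R(\alpha)$ is even and hence $I(\alpha)$ is odd. Writing $R(\alpha) = 2k$ and $I(\alpha) = 2\ell + 1$, the imaginary part $2\,R(\alpha)\,I(\alpha) = 4k\,I(\alpha)$ is visibly divisible by $4$, while the real part $4k^2 - (2\ell+1)^2 = 4(k^2 - \ell^2 - \ell) - 1$ is congruent to $-1$ modulo $4$. This yields both claimed congruences for $O_0$. The case $\alpha \in O_I$ is the mirror image: now $R(\alpha)$ is odd and $I(\alpha)$ is even, so with $R(\alpha) = 2k+1$ and $I(\alpha) = 2\ell$ the imaginary part $2\,R(\alpha)\,I(\alpha) = 4\,R(\alpha)\,\ell$ is again a multiple of $4$, whereas the real part $(2k+1)^2 - 4\ell^2 = 4(k^2 + k - \ell^2) + 1$ is congruent to $1$ modulo $4$.

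Honestly I expect no genuine obstacle here; the entire content is the identity $\alpha^2 = \bigl(R(\alpha)^2 - I(\alpha)^2\bigr) + 2\,R(\alpha)\,I(\alpha)\,i$ together with the parity split supplied by the $O_0/O_I$ dichotomy. The only point demanding a moment's care is the very first step: one must correctly deduce the parity of \emph{both} components from a single membership condition, using that $\alpha \in O$ already fixes the parity of the sum. Once that is in place, each case is a one-line expansion of a square of an even or odd integer, and the two congruences for $R(\alpha^2)$ and $I(\alpha^2)$ fall out immediately.
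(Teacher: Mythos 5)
Your proof is correct. The paper itself states Lemma~\ref{lem2} without any proof, treating it as a routine verification, and your argument --- expanding $\alpha^2$ to get $R(\alpha^2)=R(\alpha)^2-I(\alpha)^2$, $I(\alpha^2)=2R(\alpha)I(\alpha)$, then splitting on the $O_0/O_I$ parity dichotomy --- is exactly the standard computation the authors evidently had in mind.
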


\begin{theorem}\label{thm9}
The set $O$ and the subsets $E_0, E_I \subset E$ have the following multiplicative properties  
$O \cdot E_0 = E_0$; $O \cdot E_I  = E_I$.
\end{theorem}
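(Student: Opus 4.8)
The plan is to prove each of the two set equalities by establishing two inclusions separately. The forward inclusion (that the product lands in the claimed set) will be carried out by a direct parity computation on real and imaginary parts, while the reverse inclusion will follow immediately from the observation that the multiplicative identity lies in $O$.

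First I would fix notation: write an arbitrary odd element as $\omega = a + bi \in O$, so that by the Proposition characterizing odd integers we have $a + b \equiv 1 \pmod 2$. To handle $O \cdot E_0 \subset E_0$, I would take $\epsilon = c + di \in E_0$, so that $c \equiv 0$ and hence $d \equiv 0 \pmod 2$. Expanding $\omega\epsilon = (ac - bd) + (ad + bc)i$ and reducing modulo $2$, both $ac - bd$ and $ad + bc$ are even, since every term contains a factor of $c$ or $d$; thus $R(\omega\epsilon) \equiv 0 \pmod 2$ and $\omega\epsilon \in E_0$. For $O \cdot E_I \subset E_I$ I would take $\eta = c + di \in E_I$, so that $c \equiv d \equiv 1 \pmod 2$. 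Reducing the same product formula modulo $2$ and using $-b \equiv b \pmod 2$, one finds $R(\omega\eta) = ac - bd \equiv a + b \equiv 1$ and $I(\omega\eta) = ad + bc \equiv a + b \equiv 1 \pmod 2$; since the real part is odd, $\omega\eta \in E_I$.

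For the reverse inclusions, the key observation is that $1 = 1 + 0\cdot i$ satisfies $R(1) + I(1) = 1 \equiv 1 \pmod 2$, so $1 \in O$. Hence any $\epsilon \in E_0$ equals $1\cdot\epsilon \in O \cdot E_0$, and any $\eta \in E_I$ equals $1\cdot\eta \in O \cdot E_I$, giving $E_0 \subset O\cdot E_0$ and $E_I \subset O\cdot E_I$. Combining these with the forward inclusions yields both equalities.

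As for obstacles, there is essentially no deep difficulty here: the entire argument is parity bookkeeping once the defining congruences of $O$, $E_0$, and $E_I$ are substituted into the multiplication formula. The only point requiring care is the reverse inclusion, which is why one must exhibit a concrete odd multiplier; the element $1 \in O$ supplies this at once. An alternative route would decompose $O = O_0 \cup O_I$ and invoke the already-established products such as $O_0 \cdot O_0 = O_I$, but the direct computation is shorter and avoids the case analysis.
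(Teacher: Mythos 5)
Your proof is correct. The paper states Theorem \ref{thm9} without any proof, so there is no argument of the author's to compare against; your direct parity computation on real and imaginary parts is the natural argument and completely verifies the forward inclusions $O \cdot E_0 \subset E_0$ and $O \cdot E_I \subset E_I$. You were also right to notice that the theorem asserts set \emph{equality}, not mere inclusion, and that the reverse containments need their own (one-line) justification; exhibiting $1 \in O$ as a multiplier settles this, a point the paper glosses over entirely.
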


Let $E_0'=\{\alpha \in E_0|\gcd(R(\alpha), I(\alpha))=2\}$, $E_0''=\{\alpha \in E_0 | \alpha=(1+i)^2 \beta, \beta \in O\}$ be subset of $E_0$ then

\begin{theorem}
The subsets $E_I$, $E_0''$, $E_0'$ of the set $E$ have the following multiplicative properties 
$E_I \cdot E_0'' \subset E_0'$ ; $E_I \cdot E_I  \subset E_0'$.
\end{theorem}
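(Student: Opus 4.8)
The plan is to reduce both inclusions to a single structural observation about the prime $1+i$: membership in $E_I$ forces the $(1+i)$-adic valuation to equal $1$, while membership in $E_0''$ forces it to equal $2$, because $(1+i)^2 = 2i$. First I would record the factorizations the definitions supply. If $\alpha \in E_I$ then $R(\alpha)$ and $I(\alpha)$ are both odd, so $\alpha = (1+i)\alpha'$ with $\alpha' \in O$: indeed $\alpha \equiv 0 \bmod (1+i)$ since $\alpha \in E$, but $\alpha \not\equiv 0 \bmod 2$, for otherwise both coordinates would be even and $\alpha$ would lie in $E_0$, contradicting $E_0 \cap E_I = \emptyset$. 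By definition a $\gamma \in E_0''$ has the shape $\gamma = (1+i)^2 \beta = 2i\beta$ with $\beta \in O$.

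For the inclusion $E_I \cdot E_0'' \subset E_0'$, I would take $\alpha \in E_I$ and $\gamma = 2i\beta \in E_0''$ and write $\alpha\gamma = 2i\,(\alpha\beta)$. By Theorem \ref{thm9} we have $O \cdot E_I = E_I$, so $\alpha\beta \in E_I$; writing $\alpha\beta = d_1 + d_2 i$ with $d_1, d_2$ both odd gives $\alpha\gamma = -2d_2 + 2d_1 i$. For the inclusion $E_I \cdot E_I \subset E_0'$, I would write $\alpha = (1+i)\alpha'$ and $\gamma = (1+i)\gamma'$ with $\alpha', \gamma' \in O$, whence $\alpha\gamma = (1+i)^2 \alpha'\gamma' = 2i\,(\alpha'\gamma')$. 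Since $O$ is closed under multiplication, $\alpha'\gamma' = e_1 + e_2 i \in O$, so exactly one of $e_1, e_2$ is even and the other odd, and $\alpha\gamma = -2e_2 + 2e_1 i$. In both cases the product lies in $E_0$ — which also follows from the earlier multiplicative theorems, since $E_I \cdot E_I \subset E_0$ and $E_I \cdot E_0'' \subset E_I \cdot E_0 \subset E_0$ — and in both cases its coordinate gcd equals $2$ times an odd number: $\gcd(-2d_2, 2d_1) = 2\gcd(d_1,d_2)$ with $\gcd(d_1,d_2)$ odd, and $\gcd(-2e_2, 2e_1) = 2\gcd(e_1,e_2)$ with $\gcd(e_1,e_2)$ odd.

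The hard part, and the place where the statement needs care, is the final comparison with the definition of $E_0'$. Read literally, $\gcd(R(\alpha),I(\alpha)) = 2$ is too strong, because the residual odd factors $\gcd(d_1,d_2)$ and $\gcd(e_1,e_2)$ can exceed $1$: for instance $(3+3i)(1+i) = 6i$ has coordinate gcd $6$, with both factors in $E_I$. What the computation above actually establishes, and what the inclusion should assert, is that the $2$-adic part of the gcd is exactly $2$, i.e.\ $\gcd(R(\alpha\gamma),I(\alpha\gamma)) \equiv 2 \bmod 4$: the product is divisible by $2$ while its two coordinates are not both divisible by $4$.

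I would therefore prove the statement in this corrected form, the crux being the single observation that after dividing the product by $2$ one is left with a Gaussian integer from which no further factor of $2$ can be pulled out of the coordinate gcd — its coordinates are both odd in the first case and one odd, one even in the second, so the remaining gcd is odd. An alternative, more computational route that avoids Theorem \ref{thm9} is to expand $R$ and $I$ of the product directly and reduce modulo $4$, using that each odd coordinate is $\equiv \pm 1 \bmod 4$; one then checks that $R$ and $I$ cannot both be $\equiv 0 \bmod 4$ simultaneously, which is exactly the assertion that $2 \,\|\, \gcd(R,I)$.
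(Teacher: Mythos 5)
The paper states this theorem without any proof, so there is nothing of the author's to compare your argument against; your proposal must be judged on its own, and on its own it is correct --- including, crucially, the part where you decline to prove the statement as written. The literal inclusions are false, and your counterexample checks out: $3+3i$ and $1+i$ both lie in $E_I$, and their product $6i$ has $\gcd(R,I)=\gcd(0,6)=6\neq 2$, so $E_I\cdot E_I\not\subset E_0'$; likewise, with $2i=(1+i)^2\cdot 1\in E_0''$, the product $(3+3i)\cdot 2i=-6+6i$ has coordinate gcd $6$, so $E_I\cdot E_0''\not\subset E_0'$. The obstruction you isolate is exactly right: the hypotheses control only the $2$-part of the coordinate gcd, never its odd part, so no statement of this shape can pin the gcd down to exactly $2$.

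Your positive claims are sound as well. The factorizations $E_I=(1+i)\cdot O$ and $E_0''=(1+i)^2\cdot O$ are correct (if the cofactor of $(1+i)$ were even then $2\mid\alpha$, forcing $\alpha\in E_0$), the appeal to Theorem \ref{thm9} to get $\alpha\beta\in E_I$ is legitimate, and your two computations, $2i(d_1+d_2i)=-2d_2+2d_1i$ with $d_1,d_2$ both odd and $2i(e_1+e_2i)=-2e_2+2e_1i$ with exactly one of $e_1,e_2$ odd, show in both cases that $\gcd(R,I)$ is twice an odd number, i.e. $\gcd(R,I)\equiv 2\bmod 4$. Two remarks. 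First, your computation actually proves something sharper: since $O\cdot O=O$ and $(1+i)$-adic valuations add, one gets the exact identities $E_I\cdot E_I=(1+i)^2O=E_0''$ and $E_I\cdot E_0''=(1+i)^3O$, of which your ``$\gcd\equiv 2\bmod 4$'' version is the common corollary; stating the repair this way makes the role of $1+i$ transparent. Second, the repair is harmless for the rest of the paper: the later arguments in this circle of ideas (the lemmas for $X^2+(1\pm i)Y^2+Z^2=0$) only ever use mod-$4$ information supplied by Lemmas \ref{lem1}, \ref{lem2} and Theorem \ref{thm9}, never the literal ``$\gcd=2$'' property, so substituting your corrected statement loses nothing.
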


\subsection{The Fundamental Theorem of Arithmetic, a Special Form}

\begin{theorem}\label{thm11}
Let $z$ be a nonzero non-unit of $\mathbb{Z}[i]$. The irreducible factorization $z$ can be wrote as  $z=i^\alpha (1+i)^{\alpha_1} p_2^{\alpha_2}...p_m^{\alpha_m}$, where $p_i \in O^I (\pi') = \{ q \in \pi' |  p=i^k q, 0 \leq k \leq 3 \text{ and }  R(p)\equiv 1 \mod 4\}$.
\end{theorem}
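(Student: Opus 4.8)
The plan is to start from the unique factorization already guaranteed by the general Fundamental Theorem for $\pi'$ and then renormalize each factor, using the even/odd decomposition and the monoid $O^I$ developed in this section. Write $z = i^n p_1^{\alpha_1} \cdots p_m^{\alpha_m}$ with the $p_j \in \pi'$ distinct. First I would separate the even prime from the odd ones: by the characterization of Gaussian primes stated earlier, the only prime whose norm equals $2$ is $1+i$, and it is the unique even element of $\pi'$, since every other canonical prime is either a split prime of odd norm or an inert rational prime $\equiv 3 \bmod 4$, both of which lie in $O$ by the parity proposition. Hence at most one of the $p_j$ is even, and that one must be $1+i$; relabel its exponent as $\alpha_1$ (and set $\alpha_1 = 0$ if $1+i$ does not occur).

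Next, for each remaining odd prime $p_j$, I would apply the Corollary that every element of $O$ admits a unique representation $p_j = i^{k_j} \beta_j$ with $\beta_j \in O^I$. Since $i^{k_j}$ is a unit, $\beta_j$ is an associate of $p_j$ and therefore itself a prime; it is precisely the $O^I$-normalized representative, i.e. $\beta_j \in O^I(\pi')$. Substituting $p_j^{\alpha_j} = i^{k_j \alpha_j}\beta_j^{\alpha_j}$ and gathering every stray power of $i$, including the leading $i^n$, into a single exponent $\alpha \equiv n + \sum_j k_j \alpha_j \pmod 4$ yields $z = i^\alpha (1+i)^{\alpha_1}\beta_2^{\alpha_2}\cdots\beta_m^{\alpha_m}$, which is the asserted form.

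For uniqueness I would argue that the representation is forced at every stage. The multiset of primes $\{p_j\}$ up to associates, together with the exponents $\{\alpha_j\}$, is determined by $z$ through the general Fundamental Theorem; the even prime among them is pinned to $1+i$ as above; each odd $\beta_j$ is the unique $O^I$-representative of its associate class by the Corollary, where I would invoke that $O^I$ is a submonoid of $O_I$ so that the normalization is multiplicatively coherent; and once all primes and exponents are fixed, the unit $i^\alpha$ is determined by dividing $z$ by $(1+i)^{\alpha_1}\prod_j \beta_j^{\alpha_j}$.

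The main obstacle I anticipate is not the existence of the form but the bookkeeping behind uniqueness: I must confirm that the $O^I$-normalization interacts correctly with the separation of $1+i$, so that no unit ambiguity leaks between the even factor and the odd factors. Because $1+i$ is held fixed rather than also normalized into $O^I$, while each odd prime is sent to its unique $O^I$ associate, the collapse of the entire unit-group contribution into the single exponent $\alpha$ is the delicate point; I would verify it by checking that the map sending an odd prime to its $O^I$ representative is well defined and injective on the associate classes in $\pi'$, which is exactly the content of the Corollary following the $O^I$ submonoid theorem.
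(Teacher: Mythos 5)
Your proof is correct and follows exactly the route the paper intends: the paper states this theorem without proof, as an immediate consequence of the canonical factorization over $\pi'$ together with the corollary that every odd Gaussian integer has a unique associate in $O^I$ (the even prime being pinned to $1+i$ since its other associates fall outside the fundamental domain), and these are precisely the ingredients you combine. Your extra care with uniqueness --- that distinct elements of $O^I$ are never associates, so the unit exponent $\alpha$ is forced once the $O^I$-representatives and $(1+i)$-exponent are fixed --- is a welcome sharpening, since the paper's later appeals to this theorem (``the presentation \dots is uniquely'') tacitly rely on it.
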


\begin{definition}[Great Common Divisor]
Let $z=i^\alpha (1+i)^{\alpha_1} p_2^{\alpha_2}...p_m^{\alpha_m}$, $z'=i^\beta (1+i)^{\beta_1} p_2^{\beta_2}...p_m^{\beta_m}$, where all $p_i$  are the distinct primes and belong to $O^I (\pi')$, then $\gcd(z, z')=i^\gamma (1+i)^{\gamma_1} p_2^{\gamma_2}...p_m^{\gamma_m}$, where  $\gamma=\min(\alpha, \beta)$, $\gamma_1=\min(\alpha_1, \beta_1),..., \gamma_m=\min(\alpha_m, \beta_m)$. 
\end{definition}

\section{Mordell's Lemma}

\begin{definition}
A set $G=\{ z \in \mathbb{Z}[i] | z = (1+i)^{\alpha_1} p_2^{\alpha_2}...p_m^{\alpha_m}$, where all $p_i$  are distinct primes and belong to $O^I (\pi')$ we will call $G$ - set.
\end{definition}

\begin{lemma}\label{lemMord}
Let us consider the following equation over $\mathbb{Z}[i]$, where $k$ and $V$ are constants subject to $kV \neq 0$, $k, V \in G$ and $\gcd(k,V) = 1$:

\begin{equation*}
\begin{cases}
XY=kV^2 \\
\gcd(X, Y) \in U
\end{cases}
\end{equation*}

Then every solution $X, Y$ of the equation over $\mathbb{Z}[i]$ has the form  $X=i^t k_1 P^2,  Y=i^{-t} k_2 Q^2$, where $k=k_1 k_2$, $V=PQ$, $\gcd (k_1 P, k_2 Q)=1$  and  $k_1, k_2, P, Q \in G$,  $t$ ( $0 \leq t \leq 3$) is an integer.
\end{lemma}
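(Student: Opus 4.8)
The plan is to reduce the entire statement to the special-form unique factorization of Theorem~\ref{thm11}, exploiting the fact that the $G$-set is precisely the multiplicative monoid generated by $(1+i)$ together with the canonical primes of $O^I(\pi')$. Consequently every nonzero Gaussian integer has a unique expression $i^n g$ with $0 \le n \le 3$ and $g \in G$, and coprimality can be read off directly from disjointness of prime supports. With this in hand the lemma becomes an exercise in distributing exponents.

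First I would fix canonical factorizations of the data. Writing $k = (1+i)^{a_0}\prod_j p_j^{a_j}$ and $V = (1+i)^{b_0}\prod_j p_j^{b_j}$ over a common indexing by distinct canonical primes $p_j \in O^I(\pi')$, the hypothesis $\gcd(k,V)=1$ forces $\min(a_j,b_j)=0$ for every index (including the $(1+i)$ slot); that is, $k$ and $V$ have disjoint supports. Then $kV^2 = (1+i)^{a_0+2b_0}\prod_j p_j^{a_j+2b_j}$, and since $k,V \in G$ and $G$ is a monoid, this element again lies in $G$ with trivial unit part. Next, since $X,Y$ are nonzero I would write $X = i^{s} g_X$ and $Y = i^{s'} g_Y$ with $g_X,g_Y \in G$. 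From $XY = kV^2$ and uniqueness, the unit parts satisfy $s+s' \equiv 0 \pmod 4$, so setting $t=s$ gives $i^{s'} = i^{-t}$, while the $G$-parts satisfy $g_X g_Y = kV^2$. The condition $\gcd(X,Y)\in U$ is equivalent to $g_X$ and $g_Y$ having disjoint supports, i.e.\ each prime occurs with nonzero exponent in at most one of them.

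Then I would match exponents prime by prime. Letting $c_j,d_j$ be the exponents of $p_j$ in $g_X,g_Y$, we have $c_j+d_j = a_j+2b_j$ with $\min(c_j,d_j)=0$. Because $\gcd(k,V)=1$, at most one of $a_j,b_j$ is nonzero, so there are two cases: if $b_j=0$ the full exponent $a_j$ (a ``$k$-prime'') lands entirely in $g_X$ or entirely in $g_Y$; if $a_j=0$ the full exponent $2b_j$ (a ``$V$-prime'') lands entirely in one factor, and being even it contributes a genuine square $(p_j^{b_j})^2$. The identical dichotomy applies to the $(1+i)$ slot. I would then let $k_1,k_2$ collect the $k$-primes routed into $X$, resp.\ $Y$, and $P,Q$ collect the $V$-prime bases $p_j^{b_j}$ routed into $X$, resp.\ $Y$. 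This yields $k=k_1k_2$ and $V=PQ$ as support partitions, all four factors in $G$ since $G$ is a monoid, and $g_X = k_1 P^2$, $g_Y = k_2 Q^2$, hence $X = i^t k_1 P^2$ and $Y = i^{-t} k_2 Q^2$; disjointness of supports gives $\gcd(k_1 P, k_2 Q)=1$.

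I expect the only genuinely delicate point to be the bookkeeping of the unit factor together with the special prime $(1+i)$. One must check that the power of $i$ is truly forced to distribute as $i^t$ against $i^{-t}$, rather than being silently absorbable into the $G$-parts (this is exactly where the uniqueness of the $i^n g$ normalization, and the triviality of the unit part of $kV^2 \in G$, do the work), and that $(1+i)$, although the unique even prime, obeys the same disjoint-support and even-exponent behavior as the odd canonical primes. Once Theorem~\ref{thm11} is invoked to legitimize these normal forms, the remainder is routine exponent arithmetic.
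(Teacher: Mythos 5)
Your proposal is correct and takes essentially the same route as the paper's proof: both rest on the special-form unique factorization of Theorem~\ref{thm11}, use $\gcd(k,V)=1$ together with $\gcd(X,Y)\in U$ to route each prime power (the $V$-primes appearing doubled, hence as genuine squares) entirely into one of the two factors, and then compare unit parts of $XY=kV^2$ to force $s\equiv -t \pmod 4$. The only difference is one of detail: you spell out the prime-by-prime exponent bookkeeping and the disjoint-support reading of the gcd conditions, which the paper compresses into the single assertion that Theorem~\ref{thm11} yields $X_0=i^t k_1 P^2$, $Y_0=i^s k_2 Q^2$.
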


\begin{proof}
Let $X=i^t k_1 P^2$,  $Y=i^{-t} k_2 Q^2$ then $X, Y$ is a solution of the equation.

Let $X_0, Y_0$ be a solution of the equation and $\gcd(X_0,  Y_0) \in U$. Since $\gcd(k, V) = 1$ and $X_0 Y_0=kV^2$ then by theorem \ref{thm11}, $X_0=i^t k_1 P^2$,  $Y_0=i^s k_2 Q^2$, and since $X_0 Y_0=i^{t+s} k_1 P^2 k_2 Q^2 =i^{t+s} kV^2$  then we have $t + s = 0$ and $s = - t$.
\end{proof}

\begin{corollary}
Let $k, V$ belong to $G$ and $\gcd(k, V) = 1$. Let $X_0,Y_0$ be a solution of the equation 
$XY=kV^2, \gcd(X, Y) \in U$.
Then there is such an integer $n$, $0 \leq n \leq 3$  that  $i^n X_0, i^{-n} Y_0 \in G$  is a solution of given equation.
\end{corollary}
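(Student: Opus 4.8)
The plan is to obtain the statement as an immediate consequence of Mordell's Lemma (Lemma~\ref{lemMord}) together with the fact that $G$ is closed under multiplication. First I would apply Lemma~\ref{lemMord} to the given data: since $k, V \in G$, $\gcd(k,V)=1$, and $X_0, Y_0$ solves $XY = kV^2$ with $\gcd(X_0, Y_0)\in U$, the lemma yields $X_0 = i^t k_1 P^2$ and $Y_0 = i^{-t} k_2 Q^2$ with $k = k_1 k_2$, $V = PQ$, $\gcd(k_1 P, k_2 Q)=1$, and $k_1, k_2, P, Q \in G$, for some integer $t$ with $0 \le t \le 3$.

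Next I would record that $G$ is a multiplicative monoid. Indeed, a product of elements of $G$ is again a product of the prime $(1+i)$ with primes drawn from $O^I(\pi')$; since $O^I$ is a multiplicative submonoid of $O_I$ (by the earlier theorem), such a product carries no residual unit factor and therefore again lies in $G$. In particular both $k_1 P^2$ and $k_2 Q^2$ belong to $G$.

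With this in hand the choice of $n$ is essentially forced. I would take $n \in \{0,1,2,3\}$ to be the representative of $-t \bmod 4$, so that $n + t \equiv 0 \pmod 4$ and hence $i^{n+t}=1$. Then $i^n X_0 = i^{n+t} k_1 P^2 = k_1 P^2 \in G$ and $i^{-n} Y_0 = i^{-(n+t)} k_2 Q^2 = k_2 Q^2 \in G$. To close, I would check that this pair still solves the system: $(i^n X_0)(i^{-n} Y_0) = X_0 Y_0 = kV^2$, and since multiplication by a unit does not change a gcd, $\gcd(i^n X_0, i^{-n} Y_0) = \gcd(X_0, Y_0) \in U$.

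There is no substantive obstacle here; the corollary is really just a normalization of Lemma~\ref{lemMord}. The only points deserving care are the verification that $G$ is multiplicatively closed --- so that squaring $P$ and $Q$ and multiplying by $k_1, k_2$ cannot introduce a unit prefactor that pushes the result out of $G$ --- and the elementary bookkeeping of the unit exponents modulo $4$, which is what guarantees that a \emph{single} $n$ simultaneously normalizes both $X_0$ and $Y_0$.
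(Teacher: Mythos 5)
Your proposal is correct and follows essentially the same route the paper intends: the paper states this corollary without proof, treating it as an immediate normalization consequence of Lemma~\ref{lemMord}. Your write-up supplies exactly that normalization---apply the lemma, choose $n \equiv -t \pmod 4$, and use the multiplicative closure of $G$ to get $i^n X_0 = k_1 P^2 \in G$ and $i^{-n} Y_0 = k_2 Q^2 \in G$ while the product and the gcd condition are unchanged---so it merely fills in the details the paper leaves implicit.
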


\section{Quadratic Equations in Three Variables over Gaussian Integers}

\subsection{The Equation $X^2+Y^2+Z^2=0$}

Let $S$ be a set of solutions of the equation $X^2+Y^2+Z^2=0$, $(X, Y, Z) \in \mathbb{Z}[i]^3$, $XYZ\neq 0$ and $\gcd(X, Y, Z) \in U$.

 \subsubsection{A canonical form of the equation $X^2+Y^2+Z^2=0$ in special variables}

\begin{lemma}
Let $(\alpha, \beta, \gamma)$ belong to $S$ then the product $\alpha\beta\gamma \equiv 0 \mod (1+ i)^2$. 
\end{lemma}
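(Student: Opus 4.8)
The plan is to translate the divisibility claim into a statement about the $(1+i)$-adic valuation and then run a short case analysis driven by the two squaring lemmas. Since $(1+i)^2 = 2i$ is an associate of $2$, the congruence $\alpha\beta\gamma \equiv 0 \pmod{(1+i)^2}$ is equivalent to $v_{1+i}(\alpha) + v_{1+i}(\beta) + v_{1+i}(\gamma) \geq 2$, where $v_{1+i}$ denotes the $(1+i)$-adic valuation. First I would record the dictionary between parity classes and this valuation: an element of $O$ has valuation $0$, an element of $E_I$ has valuation exactly $1$ (even but not divisible by $2$, since its real and imaginary parts are both odd), and an element of $E_0$ has valuation at least $2$ (divisible by $2$). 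The condition $\gcd(\alpha,\beta,\gamma) \in U$ guarantees the three are not all even, so at least one valuation is $0$; the whole task is then to show the three valuations nonetheless sum to at least $2$, which I will organize according to how many of $\alpha,\beta,\gamma$ are odd.

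The engine is reduction modulo $4$ using Lemmas \ref{lem1} and \ref{lem2}. By Lemma \ref{lem2}, the square of any odd Gaussian integer has $I(\cdot) \equiv 0 \pmod 4$ and $R(\cdot) \equiv \pm 1 \pmod 4$, so it is ``purely real'' mod $4$; by Lemma \ref{lem1}, a square coming from $E_0$ is $\equiv 0 \pmod 4$, while one coming from $E_I$ has $R(\cdot) \equiv 0$ and $I(\cdot) \equiv 2 \pmod 4$. With these residues in hand I first rule out the case of all three odd: the real part of $\alpha^2+\beta^2+\gamma^2$ would then be a sum $\pm 1 \pm 1 \pm 1 \pmod 4$, which is always odd and hence nonzero, contradicting $\alpha^2+\beta^2+\gamma^2 = 0$. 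Thus at least one of the three must be even.

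Two cases remain. If two of them are even, then two of the valuations are at least $1$ and the sum is already at least $2$, with no further work needed. The decisive case is exactly one even, say $\gamma$, with $\alpha,\beta$ odd. Here the two odd squares contribute $I(\cdot) \equiv 0 \pmod 4$, so for $\alpha^2+\beta^2+\gamma^2 = 0$ to hold the even square $\gamma^2$ must also satisfy $I(\gamma^2) \equiv 0 \pmod 4$. By Lemma \ref{lem1} this excludes $\gamma \in E_I$ (whose square has $I \equiv 2 \pmod 4$), forcing $\gamma \in E_0$ and hence $v_{1+i}(\gamma) \geq 2$. In every case the valuations sum to at least $2$, which gives $(1+i)^2 \mid \alpha\beta\gamma$.

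I expect the single-even case to be the main obstacle, since it is the only one where the conclusion is not immediate from counting even factors: one has to extract an obstruction from the \emph{imaginary} part mod $4$, which is precisely the information the squaring lemmas were set up to supply. I would also take care to justify the valuation--membership dictionary ($E_0 \leftrightarrow 2 \mid z$, $E_I \leftrightarrow v_{1+i} = 1$, $O \leftrightarrow v_{1+i}=0$), as the entire reduction rests on it.
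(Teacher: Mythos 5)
Your proof is correct and follows essentially the same route as the paper's: both rule out the all-odd case by reducing the real part of $\alpha^2+\beta^2+\gamma^2$ modulo $4$ via Lemma \ref{lem2}, and then force the single even entry out of $E_I$ (valuation exactly $1$) into $E_0$ using the residue $I(\cdot)\equiv 2 \pmod 4$ from Lemma \ref{lem1}. The only differences are presentational: you phrase the conclusion via the $(1+i)$-adic valuation and dispose of the two-even case directly, whereas the paper leaves that case implicit in the $\gcd$ condition.
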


\begin{proof}
Let $\alpha^2+\beta^2+\gamma^2=0$ and $\alpha\beta\gamma \nequiv 0 \mod (1 + i)$ so $\alpha, \beta, \gamma \in O$. Then $R(\alpha^2+\beta^2+\gamma^2) = R(\alpha^2) + R (\beta^2) + R (\gamma^2) = 0$.
According to lemma \ref{lem2}, $R(\alpha^2) + R (\beta^2) + R (\gamma^2)\equiv \pm 1 \mod 4$.That is a contradiction. 

Let $\beta$ be even and $\beta = (1+ i)\beta'$ where $\beta' \in O$. Then $I(\alpha^2+\beta^2+\gamma^2) = I(\alpha^2)+I(\beta^2)+I(\gamma^2)=0$. According to theorem 2.20 and lemmas \ref{lem1}, \ref{lem2} $I(\alpha^2)+I(\beta^2)+I(\gamma^2) \equiv 2 \mod 4$. That is a contradiction
\end{proof}

\begin{lemma}
Let $(\alpha, \beta, \gamma)$ belong to $S$. Since $\alpha= i^m \alpha'$, $\beta = i^n (1+ i)^{2+k}\beta'$, $\gamma = i^l \gamma'$ where $m, n, l$ take values $0,1$ and $\alpha', \beta', \gamma'$ belong to $O^I$ then $m + l \equiv 1 \mod 2$.
\end{lemma}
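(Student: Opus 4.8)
The plan is to take the real part of $\alpha^2+\beta^2+\gamma^2=0$, reduce it modulo $4$, and deduce $m+l\equiv 1\mod 2$ from a short sign count. First I would rewrite the three squares using the given normal form. Since $m,l\in\{0,1\}$ we have $i^{2m}=(-1)^m$ and $i^{2l}=(-1)^l$, so $\alpha^2=(-1)^m\alpha'^2$ and $\gamma^2=(-1)^l\gamma'^2$ with $\alpha',\gamma'\in O^I$; and since $(1+i)^2=2i$, we get $\beta^2=(-1)^n(1+i)^{2(2+k)}\beta'^2=(-1)^n 2^{2+k} i^{2+k}\beta'^2$.

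Next I would compute the three real parts modulo $4$. Because $O^I\subset O_I$ (any element with $R\equiv 1\mod 4$ has odd real part), Lemma~\ref{lem2} gives $R(\alpha'^2)\equiv 1\mod 4$ and $R(\gamma'^2)\equiv 1\mod 4$. Multiplication by the real unit $(-1)^m$ commutes with taking the real part, so $R(\alpha^2)\equiv(-1)^m\mod 4$ and likewise $R(\gamma^2)\equiv(-1)^l\mod 4$. For the even variable the exponent $2+k\geq 2$ makes $2^{2+k}$ divisible by $4$, hence $R(\beta^2)\equiv 0\mod 4$.

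Finally, taking the real part of the equation yields $(-1)^m+(-1)^l\equiv 0\mod 4$. This sum can only equal $2$, $0$, or $-2$, and of these only $0$ is divisible by $4$; therefore $(-1)^m=-(-1)^l$, which forces $m$ and $l$ to have opposite parity, i.e. $m+l\equiv 1\mod 2$.

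There is no deep obstacle: the whole argument lives on the real part, so no information about the imaginary parts is needed, and the only point to verify carefully is that $R(\beta^2)\equiv 0\mod 4$, which is immediate once the even variable is known to carry at least the factor $(1+i)^2$ supplied by the normal form of the preceding lemma. The genuine content is the concluding sign count, where the congruence $(-1)^m+(-1)^l\equiv 0\mod 4$ admits the single possibility in which $m$ and $l$ differ in parity.
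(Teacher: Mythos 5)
Your proof is correct and follows essentially the same route as the paper: both reduce the real part of $\alpha^2+\beta^2+\gamma^2=0$ modulo $4$, using that the two odd squares contribute $\pm 1$ (via Lemma~\ref{lem2}) while the even square contributes $0$, so the signs must be opposite. The only cosmetic differences are that the paper organizes this as a contradiction (assuming $m+l$ even yields $1+1+0\equiv 2 \bmod 4$, contradicting $0$) and cites Lemma~\ref{lem1} for $R(\beta^2)\equiv 0 \bmod 4$, whereas you argue directly and obtain that congruence from the computation $(1+i)^{2(2+k)}=(2i)^{2+k}$.
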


\begin{proof}
According to theorem \ref{thm11}, the presentation $\alpha= i^m \alpha'$, $\beta = i^n (1+ i)^{2+k} \beta'$, $\gamma = i^l \gamma'$, where $\alpha', \beta', \gamma'$ belong to $O^I$, is uniquely. Let $m+l \equiv 0 \mod 2$ then we have $\alpha'^2\pm\beta^2+\gamma'^2=0$. Further $R(\alpha'^2\pm\beta^2+\gamma'^2)= R(\alpha'^2) \pm R (\beta^2) + R (\gamma'^2) = 0$. According to lemmas \ref{lem1}, \ref{lem2} and $R(\alpha'^2) \pm R (\beta^2) + R (\gamma'^2)\equiv 2 \mod 4$. We have contradiction.
\end{proof}

\begin{theorem}
The canonical form of the equation $X^2+Y^2+Z^2=0$ has the form $X^2+Y^2=Z^2$ if we take the values $X, Z$ from $O^I$ and $Y=(1+i)^{2+\gamma_1} p_2^{\gamma_2}...p_m^{\gamma_m}$, and all $p_i \in O^I$ are distinct primes. Any solution of the equation $X^2+Y^2+Z^2=0$  with conditions $\gcd(X, Y) = 1$, $XYZ\neq 0$ can be reduced to a solution of the equation $X^2+Y^2=Z^2$ with the same conditions and inversely.
\end{theorem}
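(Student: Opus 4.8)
The plan is to extract the canonical form \emph{directly} from the structural information already obtained in the two lemmas immediately preceding this theorem, and then to exhibit the correspondence between the two equations in both directions. First I would take an arbitrary solution $(\alpha,\beta,\gamma)\in S$ and apply the first of those lemmas to conclude $(1+i)^2\mid\alpha\beta\gamma$. Combined with $\gcd(\alpha,\beta,\gamma)\in U$, this pins down the parity structure completely: not all three coordinates can be odd (the first lemma), and no two can be even, since if, say, $\alpha$ and $\gamma$ were even then $\beta^2=-(\alpha^2+\gamma^2)$ would be even and so would $\beta$, contradicting the unit gcd. Hence exactly one coordinate is even, and because the product is divisible by $(1+i)^2$ while the two odd coordinates contribute no factor of $1+i$, that coordinate is divisible by $(1+i)^2$. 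Using the permutation symmetry of $X^2+Y^2+Z^2=0$ I may name it $\beta$, and by Theorem \ref{thm11} and its corollary write $\alpha=i^m\alpha'$, $\beta=i^n(1+i)^{2+k}\beta'$, $\gamma=i^l\gamma'$ with $\alpha',\beta',\gamma'\in O^I$ and $m,n,l\in\{0,1\}$.

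The key step is the sign bookkeeping supplied by the second lemma, namely $m+l\equiv1\bmod2$. Setting $Y_0=(1+i)^{2+k}\beta'\in G$ and squaring gives $\alpha^2=(-1)^m\alpha'^2$, $\gamma^2=(-1)^l\gamma'^2$ and $\beta^2=(-1)^nY_0^2$, so the identity $\alpha^2+\beta^2+\gamma^2=0$ becomes $(-1)^m\alpha'^2+(-1)^nY_0^2+(-1)^l\gamma'^2=0$. Multiplying through by the unit $(-1)^n$ normalises the coefficient of $Y_0^2$ to $+1$, and since $(m+n)+(l+n)\equiv m+l\equiv1\bmod2$ the remaining coefficients $(-1)^{m+n}$, $(-1)^{l+n}$ have opposite signs. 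After relabelling the two odd coordinates this reads exactly $X^2+Y^2=Z^2$ with $X,Z\in O^I$ and $Y=Y_0=(1+i)^{2+\gamma_1}p_2^{\gamma_2}\cdots p_m^{\gamma_m}$ of the prescribed shape. The delicate point is to verify that these three powers of $i$ organise so that the single surviving minus sign always lands on an odd square (which then plays the role of $Z^2$) and never on $Y_0^2$; the computation above shows this is forced precisely by $m+l$ being odd.

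Finally I would check that the side conditions transfer. Multiplication by units preserves $XYZ\neq0$, so it remains to handle gcd's. I would first observe that for either equation the hypothesis $\gcd(X,Y)=1$ already forces pairwise coprimality and hence a unit total gcd: if a prime $p$ divided $X$ and $Z$ then $p^2\mid X^2+Z^2=-Y^2$ would give $p\mid Y$, contradicting $\gcd(X,Y)=1$, and symmetrically for $Y,Z$. Thus the forward solution lies in $S$, and since $\gcd$ is unchanged under multiplication by $i^m,i^{-n}$ we get $\gcd(X,Y)=\gcd(\alpha,\beta)=1$. For the converse, given a solution of $X^2+Y^2=Z^2$ meeting the stated conditions, the triple $(X,Y,iZ)$ satisfies $X^2+Y^2+(iZ)^2=X^2+Y^2-Z^2=0$ with the same nonvanishing and coprimality, which is the inverse reduction. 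The hard part will be the middle step: making the bookkeeping of the powers of $i$ on all three coordinates genuinely consistent, so that one lands simultaneously with $X,Z\in O^I$ and $Y$ in canonical $G$-form, rather than only up to an unaccounted unit.
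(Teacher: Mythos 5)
Your proof is correct and follows essentially the same route as the paper's: both rest on the lemma giving $\alpha\beta\gamma\equiv 0 \bmod (1+i)^2$, the decomposition $\alpha=i^m\alpha'$, $\beta=i^n(1+i)^{2+k}\beta'$, $\gamma=i^l\gamma'$ from Theorem \ref{thm11}, and the parity relation $m+l\equiv 1 \bmod 2$ to force the sign split yielding $X^2+Y^2=Z^2$ (your multiplication by $(-1)^n$ at the level of squares is just the paper's multiplication of the triple by the unit $i^{-n}$ in different notation). Your added verifications --- that $\gcd(X,Y)=1$ forces pairwise coprimality, and the explicit inverse map $(X,Y,Z)\mapsto(X,Y,iZ)$ --- only make explicit what the paper leaves as ``inversely it is true.''
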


\begin{proof}
Let $(\alpha, \beta, \gamma)$ belong to $S$, $\alpha=i^m \alpha'$, $\beta=i^n (1+i)^{2 +k} \beta'$, $\gamma=i^l \gamma'$, where $m, n, l$ take values $0, 1$ and $\alpha', \beta', \gamma' \in O^I$. Since $(\alpha, \beta, \gamma)$ is a solution so $(i^{m-n} \alpha', (1+ i)^{2+k} \beta', i^{l-n} \gamma')$ where $m - n$, $l - n$ take values $0, 1$ is also a solution of the equation $X^2+Y^2+Z^2=0$. According lemma 4.2, $m+l \equiv 1 \mod 2$ so either $(\alpha', (1+i)^{2+k} \beta', i\gamma')$ or $(i\alpha', (1+i)^{2+k} \beta', \gamma')$ is a solution of the equation $X^2+Y^2+Z^2=0$  and $(\alpha', (1+i)^{2+k} \beta', \gamma')$  or $(\gamma', (1+i)^{2+k} \beta', \alpha')$ is a solution of the equation $X^2+Y^2=Z^2$. Thus $X = \alpha', Z = \gamma'$ and  $Y=(1+i)^{2+k} \beta'$. Inversely it is true.
\end{proof}

\subsubsection{The solutions of the equation $X^2+Y^2+Z^2=0$ in special variables}

\begin{theorem}
A solution of the equation $X^2+Y^2=Z^2$, where $\gcd(X, Y) \in U$ and $XYZ \neq 0$, can be represented in the parametric form: $X = i^{t+1} (P^2-(-1)^t Q^2), Y=(1+i)^2 PQ, Z=i^{t+1} (P^2+(-1)^t Q^2)$
where $\gcd(P, Q) = 1$, $PQ \equiv 0 \mod (1+i)$, $0 \leq t \leq 3$. $P, Q$ are the products of the factors $(1+i)^{\gamma_1}, p_2^{\gamma_2},...,p_m^{\gamma_m}$, and all $p_i \in O^I$ are distinct primes.  
\end{theorem}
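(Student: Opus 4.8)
The plan is to treat the two implications separately, with essentially all of the content living in the converse.

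For the forward direction I would simply verify that the stated triple solves the equation. Computing $Z+X = 2i^{t+1}P^2$ and $Z-X = 2i^{t+1}(-1)^tQ^2 = 2i^{3t+1}Q^2$, one gets $(Z-X)(Z+X) = 4i^{4t+2}P^2Q^2 = -4P^2Q^2$, while $Y^2 = (1+i)^4P^2Q^2 = -4P^2Q^2$; hence $Z^2-X^2=Y^2$. The condition $PQ\equiv 0 \bmod (1+i)$ together with $\gcd(P,Q)\in U$ forces exactly one of $P,Q$ to be even, so $P^2\pm(-1)^tQ^2$ is odd and $X,Z$ are odd, consistent with the canonical variables.

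For the converse, let $(X,Y,Z)$ be a solution in the canonical form, so $X,Z\in O^I$ and $Y=(1+i)^{2+\gamma_1}p_2^{\gamma_2}\ldots p_m^{\gamma_m}$. First I would rewrite the equation as $Y^2=(Z-X)(Z+X)$ and compute $\gcd(Z-X,Z+X)$. Any common divisor divides $2Z$ and $2X$, hence $2\gcd(X,Z)$; and a common prime of $X$ and $Z$ would divide $Z^2-X^2=Y^2$, hence $Y$, contradicting $\gcd(X,Y)\in U$, so $\gcd(X,Z)\in U$ and the common divisor divides $2$. On the other hand $X,Z\in O^I\subset O_I$ have odd real and even imaginary parts, so $Z\pm X$ have even real and even imaginary parts and are therefore divisible by $2$. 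Thus $\gcd(Z-X,Z+X)$ is exactly $2$ up to a unit, which is equivalent to saying that $A:=(Z-X)/2$ and $B:=(Z+X)/2$ are coprime.

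Now $AB=(Y/2)^2$, and since $2=-i(1+i)^2$ I would write $Y/2=iV$ with $V=(1+i)^{\gamma_1}p_2^{\gamma_2}\ldots p_m^{\gamma_m}\in G$, so that $(-A)\,B=V^2$ with $\gcd(-A,B)\in U$, $k=1\in G$, $V\in G$. Applying Mordell's Lemma \ref{lemMord} then yields $-A=i^sP^2$, $B=i^{-s}Q^2$, $V=PQ$, $\gcd(P,Q)\in U$ and $P,Q\in G$. Reconstructing $Z=A+B$ and $X=B-A$ and setting $t=s+1$ collapses the two units $i^{s+2}=i^{t+1}$ and $i^{-s}=i^{3t+1}$ into a single parameter, giving $X=i^{t+1}(P^2-(-1)^tQ^2)$ and $Z=i^{t+1}(P^2+(-1)^tQ^2)$; then $Y^2=-4P^2Q^2=(1+i)^4P^2Q^2$ recovers $Y=(1+i)^2PQ$. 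Finally, since $Z=A+B$ is odd, $A$ and $B$ must have opposite parity, so exactly one of $P,Q$ is even and $PQ\equiv 0\bmod(1+i)$.

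The step I expect to be the main obstacle is establishing that $\gcd(Z-X,Z+X)$ is \emph{exactly} $(1+i)^2$: the upper bound ``divides $2$'' and the lower bound ``divisible by $2$'' each require the fine structure of $O^I\subset O_I$, and it is this that makes the division by $2$ legitimate and produces coprime factors fit for Mordell's Lemma. A secondary source of care is the bookkeeping of the powers of $i$ — Mordell delivers $-A$ and $B$ only up to the independent-looking units $i^{\pm s}$, and one must check (via $3s\equiv -s\bmod 4$) that the asymmetric output genuinely reassembles into the symmetric stated form with a common outer factor $i^{t+1}$ — together with confirming that the square roots $P,Q$ can be taken in $G$ by uniqueness of factorization, and that the unit on $Y$ is fixed by the normalization rather than floating.
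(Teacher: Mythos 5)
Your construction is, in substance, the paper's own proof: the paper likewise factors the canonical equation as $(\gamma-\alpha)(\gamma+\alpha)=\beta^2$, divides both factors by $(1+i)^2$ (your division by $2=-i(1+i)^2$ is the same normalization up to a unit, which is exactly why you pick up the extra $-1$ that you fold into $(-A)B=V^2$), treats the two quotients as coprime, and applies Lemma \ref{lemMord} to write them as units times squares in $G$ before reassembling. Where you differ is in rigor, to your advantage: the paper merely asserts that $x=\frac{\gamma-\alpha}{(1+i)^2}$, $y=\frac{\gamma+\alpha}{(1+i)^2}$ are integral and coprime, whereas you derive integrality from the parity structure of $O^I\subset O_I$ (odd real part, even imaginary part) and coprimality from $\gcd(X,Z)\in U$ via $\gcd(X,Y)\in U$; your explicit forward verification and the parity argument forcing $PQ\equiv 0 \bmod (1+i)$ are also correct.

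There is, however, one step that fails as written: the reassembly with $t=s+1$. From $-A=i^sP^2$, $B=i^{-s}Q^2$ you get $X=B-A=i^sP^2+i^{-s}Q^2$ and $Z=A+B=-i^sP^2+i^{-s}Q^2$. Your congruences $i^{s+2}=i^{t+1}$ and $i^{-s}=i^{3t+1}$ are correct, but substituting them gives $Z=i^{t+1}\bigl(P^2+(-1)^tQ^2\bigr)$ while $X=-i^{t+1}\bigl(P^2-(-1)^tQ^2\bigr)$: the two do not share the common outer unit $i^{t+1}$, so the stated form is missed by a sign. The repair is forced once you match against what the target form implies, namely $Z+X=2i^{t+1}P'^2$ and $Z-X=2i^{3t+1}Q'^2$: comparing with $Z+X=2i^{-s}Q^2$ and $Z-X=2i^{s+2}P^2$, both conditions give $t\equiv -s-1 \pmod 4$ with the roles interchanged, $P'=Q$ and $Q'=P$ (uniqueness of factorization in $G$ leaves no unit freedom in $P',Q'$, so this is the only choice). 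This is precisely the normalization the paper's assignment $\frac{\gamma-\alpha}{i^{-t}(1+i)^2}=Q^2$, $\frac{\gamma+\alpha}{i^{t}(1+i)^2}=P^2$ builds in from the start. So the defect is a repairable unit-bookkeeping slip --- the very point you flagged as delicate --- not a structural flaw in the argument.
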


\begin{proof}
Let $(\alpha, \beta, \gamma)$ be a solution of the equation. Thus we have the equality $\alpha^2+\beta^2=\gamma^2$ where $\alpha, \gamma \in O^I$  and $\beta=(1+i)^{2+\gamma_1} p_2^{\gamma_2}...p_m^{\gamma_m}$,   $p_i \in O^I$ and all $p_i$  are distinct. This leads to: $xy = z^2$, $\gcd(x, y)\in U$,
where $x=\frac{\gamma-\alpha}{(1+i)^2}$ , $y=\frac{\gamma+\alpha}{(1+i)^2}$, $z=\frac{\beta}{(1+i)^2}$. According to lemma \ref{lemMord} there exists an integer $t$ such that $x'=\frac{\gamma-\alpha}{i^{-t} (1+i)^2}$ , $y'=\frac{\gamma+\alpha}{i^t (1+i)^2}$ are faithful squares belonging to $G$, $x'y'= xy$, and $\frac{\gamma-\alpha}{i^{-t} (1+i)^2} =Q^2$, $\frac{\gamma+\alpha}{i^t (1+i)^2}  =P^2$.
$\alpha= i^{t+1} (P^2-(-1)^t Q^2)$, $\beta=(1+i)^2 PQ$, $\gamma=i^{t+1} (P^2+(-1)^t Q^2)$ 
where $\gcd(P, Q) =1$, $PQ \equiv 0 \mod(1+ i)$, $0\leq t \leq 3$. $P, Q$ are the products of the factors $(1+i)^{\gamma_1},p_2^{\gamma_2},...,p_m^{\gamma_m}$, and all $p_i \in O^I$ are distinct primes.
\end{proof}

\subsection{The Equation  $X^2+iY^2+Z^2=0$} 

Let $S$ be a set of solutions of the equation $X^2+iY^2+Z^2=0$, where $(X, Y, Z) \in \mathbb{Z}[i]^3$,  $XYZ \neq 0$ and $\gcd(X, Y, Z) \in U$. 

\subsubsection{Canonical forms of the equation   $X^2+iY^2+Z^2=0$ in special variables}

\begin{lemma}
Let $(\alpha, \beta, \gamma)$ belong to $S$ then $\alpha\beta\gamma \equiv 0 \mod (1+ i)$.
\end{lemma}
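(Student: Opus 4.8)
The plan is to mirror the strategy used for the corresponding lemma of the equation $X^2+Y^2+Z^2=0$, but to draw the contradiction from the \emph{imaginary} part of the equation rather than the real part. The reason for this shift is that multiplication by $i$ interchanges the real and imaginary parts up to sign: if $\beta^2=R(\beta^2)+iI(\beta^2)$, then $i\beta^2=-I(\beta^2)+iR(\beta^2)$, so the imaginary part of $i\beta^2$ is $R(\beta^2)$. Recognizing this is essentially the only structural difference from the previous case.

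First I would argue by contradiction, assuming $\alpha\beta\gamma \nequiv 0 \mod (1+i)$. Since $(1+i)$ is prime, this forces each factor to be odd, so by the proposition characterizing odd Gaussian integers we have $\alpha,\beta,\gamma \in O$. Next I would take the imaginary part of the defining equation $\alpha^2+i\beta^2+\gamma^2=0$, which by the observation above yields
\[
I(\alpha^2)+R(\beta^2)+I(\gamma^2)=0.
\]
Then I would invoke Lemma \ref{lem2}: for any odd $\delta\in O$ (whether in $O_0$ or $O_I$) one has $I(\delta^2)\equiv 0 \mod 4$ and $R(\delta^2)\equiv \pm 1 \mod 4$. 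Applying this to $\alpha,\beta,\gamma$ gives $I(\alpha^2)\equiv I(\gamma^2)\equiv 0 \mod 4$ and $R(\beta^2)\equiv \pm 1 \mod 4$, so the left-hand side above is congruent to $\pm 1 \mod 4$ and in particular nonzero. This contradicts its equality to $0$, and hence at least one of $\alpha,\beta,\gamma$ must be divisible by $(1+i)$.

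I expect no genuine obstacle here. Once the imaginary part is selected, the congruences from Lemma \ref{lem2} close the argument immediately, and no case distinction between $O_0$ and $O_I$ is needed, since both subcases contribute the same residues $I(\delta^2)\equiv 0$ and $R(\delta^2)\equiv\pm 1 \mod 4$. The only point requiring care is the bookkeeping of real and imaginary parts under multiplication by $i$, which is precisely where this proof diverges from the $X^2+Y^2+Z^2=0$ analogue; note that the real part alone would not suffice, since there $R(\alpha^2)-I(\beta^2)+R(\gamma^2)$ can vanish modulo $4$.
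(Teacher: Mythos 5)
Your proposal is correct and follows essentially the same route as the paper: assume $\alpha,\beta,\gamma \in O$, take the imaginary part to get $I(\alpha^2)+R(\beta^2)+I(\gamma^2)=0$, and contradict this modulo $4$ using Lemma \ref{lem2}, exactly as the paper does. The paper's proof contains a further step showing that $\beta$ in particular must be the even variable, but that goes beyond the literal statement of the lemma, so your argument fully suffices.
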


\begin{proof}
Let $\alpha^2+i\beta^2+\gamma^2=0$ and $\alpha\beta\gamma \nequiv 0 \mod (1 + i)$ so $\alpha, \beta, \gamma \in O$. Then $I(\alpha^2+i\beta^2+\gamma^2)=I(\alpha^2)+I(i\beta^2)+I(\gamma^2)=I(\alpha^2)+R(\beta^2)+I(\gamma^2)=0$. According to lemma \ref{lem2} we have $I(\alpha^2)+R(\beta^2)+I(\gamma^2)\equiv\pm1 \mod 4$  that is a contradiction.
Let us show that $\beta \equiv 0 \mod (1+ i)$. Let $\alpha\equiv 0 \mod (1+ i)$  and $\beta, \gamma \nequiv 0 \mod (1+ i)$ then according to lemmas \ref{lem1}, \ref{lem2} we have $I(\alpha^2)+R(\beta^2)+I(\gamma^2)\equiv1 \mod 2$. That is a contradiction.
\end{proof}

For $\beta$ there is two cases. 
\begin{itemize}
\item Case 1: $\beta \equiv 0 \mod (1+ i)$; $\beta \nequiv 0 \mod(1+i)^2$. 
\item Case 2: $\beta\equiv 0 \mod(1+i)^2$.
\end{itemize}

Case 1: Let $\beta \equiv 0 \mod (1+ i)$ and $\beta\nequiv 0 \mod(1+i)^2$ then

\begin{lemma}
Let $(\alpha, \beta, \gamma)$ belong to $S$. Since $\alpha=i^m \alpha'$, $\beta=i^n (1+i)\beta'$, $\gamma=i^l \gamma'$, where $m, n, l$ take values $0, 1$ and $\alpha', \beta', \gamma'$ belong to $O^I$ then $m + l \equiv 0 \mod 2$.
\end{lemma}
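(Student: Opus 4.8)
The plan is to mirror the strategy of the preceding parity lemma for $X^2+Y^2+Z^2=0$: substitute the canonical forms into the exact equation, pass to real parts, and read off the residue modulo $4$. By Theorem \ref{thm11} the representations $\alpha=i^m\alpha'$, $\beta=i^n(1+i)\beta'$, $\gamma=i^l\gamma'$ with $\alpha',\beta',\gamma'\in O^I$ are unique, so it suffices to argue about the fixed integers $m,n,l\in\{0,1\}$. I would argue by contradiction, assuming $m+l\equiv 1\mod 2$ and deriving that the real part of the left-hand side is nonzero modulo $4$.

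First I would rewrite each term of $\alpha^2+i\beta^2+\gamma^2=0$. Using $i^{2m}=(-1)^m$ gives $\alpha^2=(-1)^m\alpha'^2$ and $\gamma^2=(-1)^l\gamma'^2$. The key computation is for the middle term: since $(1+i)^2=2i$, we get $\beta^2=(-1)^n 2i\,\beta'^2$ and hence $i\beta^2=(-1)^{n+1}2\,\beta'^2$. This is where Case 1 (the hypothesis $\beta\equiv 0\mod(1+i)$ but $\beta\nequiv 0\mod(1+i)^2$) enters decisively: the single factor $(1+i)$ contributes exactly one factor of $2$, and combined with the coefficient $i$ it produces a term whose real part is $\equiv 2\mod 4$, rather than $\equiv 0\mod 4$. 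This is the feature that flips the parity conclusion relative to the $(1+i)^2$ case of the previous subsection.

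Next I would take real parts of the exact equation, obtaining $R(\alpha^2)+R(i\beta^2)+R(\gamma^2)=0$ in $\mathbb{Z}$. Since $\alpha',\beta',\gamma'\in O^I\subset O_I$, the second bullet of Lemma \ref{lem2} gives $R(\alpha'^2)\equiv R(\beta'^2)\equiv R(\gamma'^2)\equiv 1\mod 4$. Therefore modulo $4$ we have $R(\alpha^2)\equiv(-1)^m$, $R(\gamma^2)\equiv(-1)^l$, and $R(i\beta^2)=(-1)^{n+1}2R(\beta'^2)\equiv 2$, because $\pm 2\equiv 2\mod 4$ regardless of $n$. Summing, the equation forces $(-1)^m+(-1)^l+2\equiv 0\mod 4$, i.e. $(-1)^m+(-1)^l\equiv 2\mod 4$.

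Finally I would examine the two parities. If $m+l\equiv 1\mod 2$ then $m\not\equiv l\mod 2$, so $(-1)^m=-(-1)^l$ and $(-1)^m+(-1)^l=0\not\equiv 2\mod 4$, a contradiction; hence $m+l\equiv 0\mod 2$. The main difficulty is not computational but bookkeeping: one must verify that it is the \emph{real} part, not the imaginary part, that isolates the parity of $m+l$, since $I(\alpha'^2),I(\beta'^2),I(\gamma'^2)$ are all $\equiv 0\mod 4$ and the imaginary-part relation carries no parity information here. The other point requiring care is confirming that the sign $(-1)^{n+1}$ on the middle term is irrelevant, which holds precisely because $-2\equiv 2\mod 4$; note also that only the odd-square case of Lemma \ref{lem2} is actually needed, as every primed variable already lies in $O^I$.
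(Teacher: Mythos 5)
Your proof is correct and takes essentially the same route as the paper: assume $m+l\equiv 1 \bmod 4$ fails (i.e., $m+l$ odd), take real parts of $\alpha^2+i\beta^2+\gamma^2=0$, and derive a contradiction modulo $4$ using the fact that odd squares in $O_I$ have real part $\equiv 1 \bmod 4$ while the middle term contributes $\equiv 2 \bmod 4$. The only cosmetic difference is that you obtain the middle term's contribution by expanding $(1+i)^2=2i$ and applying Lemma \ref{lem2} to $\beta'$, whereas the paper invokes Lemma \ref{lem1} for $\beta\in E_I$ (together with Theorem \ref{thm9}) and keeps the term as $\pm I(\beta^2)$; both yield the same residue $2 \bmod 4$ and the same contradiction.
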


\begin{proof}
According to theorem \ref{thm11}, $\alpha=i^m \alpha'$, $\beta=i^n (1+i)\beta'$, $\gamma=i^l \gamma'$, where $\alpha', \beta', \gamma' \in O^I$, is uniquely. Let $n + m \equiv 1 \mod 2$ then $\alpha'^2\pm i\beta^2+\gamma'^2=0$  and $R(\alpha'^2\pm i \beta^2+\gamma'^2)=R(\alpha'^2) \pm I(\beta^2) - R(\gamma'^2)=0$.  According to theorem 2.20, lemmas \ref{lem1}, \ref{lem2}  $R(\alpha'^2) \pm I(\beta^2)-R(\gamma'^2)\equiv2 \mod 4$. That is a contradiction.
\end{proof}

\begin{theorem}
The canonical representation of the equation  $X^2+iY^2+Z^2=0$ has the form $X^2+Z^2=\pm iY^2$ if we take the values $X, Z$ from $O^I$ and $Y=(1+i)  p_2^{\gamma_2}...p_m^{\gamma_m}$ , where  $p_i \in O^I$ and all  $p_i$ are distinct primes. Any solution of the equation $X^2+iY^2+Z^2=0$, with conditions $\gcd(X, Y)=1$, $XYZ\neq 0$ can be reduced to a solution of the equation $X^2+Z^2=\pm iY^2$, with the same conditions and inversely.
\end{theorem}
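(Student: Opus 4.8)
The plan is to follow the template of the canonical-form theorem already established for $X^2+Y^2+Z^2=0$, adapting the bookkeeping of unit factors to the coefficient $i$ attached to $Y^2$ and to the Case~1 normalization of $\beta$. First I would take an arbitrary $(\alpha,\beta,\gamma)\in S$ and invoke Theorem~\ref{thm11} to write the unique factorizations $\alpha=i^m\alpha'$, $\beta=i^n(1+i)\beta'$, $\gamma=i^l\gamma'$ with $\alpha',\beta',\gamma'\in O^I$ and $m,n,l\in\{0,1\}$; the single factor $(1+i)$ is exactly the Case~1 hypothesis $\beta\equiv 0 \bmod (1+i)$, $\beta\nequiv 0 \bmod (1+i)^2$. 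Because the equation $X^2+iY^2+Z^2=0$ is homogeneous of degree two, multiplying a solution by the unit $i^{-n}$ again yields a solution, so $(i^{m-n}\alpha',(1+i)\beta',i^{l-n}\gamma')$ lies in $S$ as well.

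The decisive input is the preceding lemma, which gives $m+l\equiv 0\bmod 2$. Since $m,l\in\{0,1\}$ this forces $m=l$, so the $X$- and $Z$-coordinates of the normalized solution carry the identical unit factor $i^{m-n}$. Substituting into the equation and using $(1+i)^2=2i$, I would obtain $i^{2(m-n)}(\alpha'^2+\gamma'^2)=-i(1+i)^2\beta'^2$; as $i^{2(m-n)}=\pm 1$ according to the parity of $m-n$, this rearranges to $\alpha'^2+\gamma'^2=\pm i(1+i)^2\beta'^2=\pm iY^2$ with $X=\alpha'$, $Z=\gamma'\in O^I$ and $Y=(1+i)\beta'$, which has the stated shape $(1+i)p_2^{\gamma_2}\cdots p_m^{\gamma_m}$ because $\beta'\in O^I$ is a product of distinct odd canonical primes. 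This is precisely the canonical form $X^2+Z^2=\pm iY^2$.

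For the converse I would reverse the computation: given a solution of $X^2+Z^2=\pm iY^2$ in these special variables, the branch with coefficient $-i$ already reads $X^2+iY^2+Z^2=0$, while the branch with $+i$ becomes the required equation after replacing $Y$ by the unit multiple $iY$, so that $i(iY)^2=-iY^2$, followed by reabsorbing the unit into the $i^n$ part of the Case~1 factorization. Throughout I would verify that these multiplications by units preserve $XYZ\neq 0$ and do not alter $\gcd(X,Y)$, so that the coprimality condition transfers intact in both directions.

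The step I expect to be the main obstacle is the correct accounting of the sign $\pm$ in $\pm iY^2$. Unlike the $X^2+Y^2+Z^2=0$ case, where the lemma forced $m\neq l$ and the sign was absorbed by a choice of which coordinate to move across the equals sign, here $m=l$ keeps $\alpha'$ and $\gamma'$ symmetric, and the residual sign comes entirely from $i^{2(m-n)}=\pm 1$; isolating this parity carefully, and matching it to the two admissible coefficients on $Y^2$, is where the argument must be watched most closely.
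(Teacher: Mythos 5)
Your proposal is correct and follows essentially the same route as the paper's own proof: the unique factorization of Theorem~\ref{thm11}, normalization by the unit $i^{-n}$, the preceding lemma forcing $m+l\equiv 0 \pmod 2$ (hence $m=l$), and a parity split on $m-n$ yielding the two signs in $X^2+Z^2=\pm iY^2$. Your explicit handling of the converse (replacing $Y$ by $iY$ in the $+i$ branch) merely fills in what the paper dismisses with ``inversely it is also true,'' and your unified computation via $i^{2(m-n)}=\pm 1$ is just a compact rewriting of the paper's two-case enumeration.
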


\begin{proof}
Let $(\alpha, \beta, \gamma)$ belong to $S$ and $\alpha=i^m \alpha'$, $\beta=i^n (1+i)\beta'$, $\gamma=i^l \gamma'$, where $m, n, l$ take values $0, 1$ and $\alpha', \beta', \gamma' \in O^I$. Since $(\alpha, \beta, \gamma)$ is a solution so we can consider that $(i^{m-n} \alpha', (1+i)\beta', i^{l-n} \gamma')$ where $m - n, l - n$ take values $0, 1$ is also a solution of the equation $X^2+iY^2+Z^2=0$. According lemma 4.6, $m + l \equiv 0 \mod 2$ hence $(\alpha', (1+i)\beta',\gamma')$ or $(i\alpha', (1+i)\beta',i\gamma')$ is a solution of the equation $X^2+iY^2+Z^2=0$ and $(\alpha', (1+i)\beta',\gamma')$ is a solution of the equation $X^2+Z^2=- iY^2$ thus $X = \alpha', Z = \gamma'$ from $O^I$ and $Y = (1+ i)\beta'$ where $\beta' \in O^I$ or $(i\alpha', (1+i)\beta', i\gamma')$ is a solution of the equation $X^2 + iY^2+ Z^2 = 0$ and $(\alpha', (1+ i)\beta', \gamma')$ is a solution of the equation $X^2 + Z^2 = iY^2$ thus $X = \alpha', Z = \gamma'$ from $O^I$ and $Y = (1+ i)\beta'$ where $\beta' \in O^I$. Inversely it is also true.
\end{proof}

Case 2: Let $\beta \equiv 0 \mod(1+i)^2$. Then 

\begin{lemma}
Let $(\alpha, \beta, \gamma)$ belong to $S$. Since $\alpha= i^m \alpha'$, $\beta = i^n (1+ i)^{2+k}  \beta'$, $\gamma = i^l \gamma'$ where $m, n, l$ take values $0, 1$ and $\alpha', \beta', \gamma'$ belong to $O^I$ then $m + l \equiv 1 \mod 2$.
\end{lemma}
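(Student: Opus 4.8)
The plan is to argue by contradiction, mirroring the proof of Case~1 (Lemma~4.6) but exploiting the fact that the higher power of $(1+i)$ dividing $\beta$ here shifts the relevant residue modulo $4$ by an extra factor of $2$, which is exactly what flips the parity conclusion from $m+l\equiv 0$ to $m+l\equiv 1$. First I would invoke Theorem~\ref{thm11} to record that the decomposition $\alpha=i^m\alpha'$, $\beta=i^n(1+i)^{2+k}\beta'$, $\gamma=i^l\gamma'$ with $\alpha',\beta',\gamma'\in O^I$ and $m,n,l\in\{0,1\}$ is unique, so the residues of $m$ and $l$ modulo $2$ are well defined.

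Next I would assume for contradiction that $m+l\equiv 0\bmod 2$. Substituting the decomposition into $\alpha^2+i\beta^2+\gamma^2=0$ and using $\alpha^2=(-1)^m\alpha'^2$, $\gamma^2=(-1)^l\gamma'^2$ with $(-1)^m=(-1)^l$ (since $m+l$ is even), I can factor out the common sign and reduce the relation to $\alpha'^2+\gamma'^2=\mp i\beta^2$. The key point is that here $\alpha'^2$ and $\gamma'^2$ enter with the \emph{same} sign, so their contributions add rather than cancel.

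I would then compare real parts modulo $4$. On the left, since $\alpha',\gamma'\in O^I\subset O_I$, Lemma~\ref{lem2} gives $R(\alpha'^2)\equiv R(\gamma'^2)\equiv 1\bmod 4$, so $R(\alpha'^2+\gamma'^2)\equiv 2\bmod 4$. On the right, because $\beta\equiv 0\bmod(1+i)^2$ we have $2\mid\beta$, hence $\beta\in E_0$, and Lemma~\ref{lem1} gives $R(\beta^2)\equiv I(\beta^2)\equiv 0\bmod 4$; since $R(i\beta^2)=-I(\beta^2)$, the right-hand side is $\equiv 0\bmod 4$. This forces $2\equiv 0\bmod 4$, the desired contradiction, so $m+l\equiv 1\bmod 2$.

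The main obstacle—or rather the one delicate bookkeeping step—is choosing to extract the \emph{real} part rather than the imaginary part: after multiplication by $i$ it is $I(\beta^2)$, surfacing via $R(i\beta^2)=-I(\beta^2)$, that is pinned to $0\bmod 4$, whereas taking imaginary parts would only yield $0\equiv 0$ and no contradiction. One should also verify that the sign ambiguity from $(-1)^m$ is genuinely harmless: it is, precisely because the assumption $m+l$ even makes $\alpha'^2$ and $\gamma'^2$ carry identical signs, so their real parts sum to $2\bmod 4$ regardless of the overall sign. This is the structural feature that distinguishes Case~2 from Case~1, where $\beta$ carries only a single factor of $(1+i)$ and $R(i\beta^2)\equiv 2\bmod 4$, producing the opposite parity.
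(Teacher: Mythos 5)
Your proof is correct and follows essentially the same route as the paper: both assume $m+l\equiv 0\bmod 2$, use the uniqueness from Theorem~\ref{thm11}, and derive the contradiction $2\equiv 0\bmod 4$ by comparing real parts, with Lemma~\ref{lem2} pinning $R(\alpha'^2)\equiv R(\gamma'^2)\equiv 1\bmod 4$ and Lemma~\ref{lem1} (via $\beta\in E_0$) pinning $I(\beta^2)\equiv 0\bmod 4$. Your closing remarks on why the real part (not the imaginary part) must be extracted, and on how the extra factor of $(1+i)$ flips the parity relative to Case~1, accurately explain the mechanism the paper leaves implicit.
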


\begin{proof}
According to theorem \ref{thm11} the presentation $\alpha= i^m \alpha'$, $\beta = i^n (1+ i)^{2+k}  \beta'$, $\gamma = i^l \gamma'$ where $\alpha', \beta', \gamma' \in O^I$ is uniquely. Let $n + m \equiv 0 \mod 2$ then $\alpha'^2 \pm i\beta^2 + \gamma'^2 = 0$ and $R(\alpha'^2 \pm i\beta^2+\gamma'^2)=R(\alpha'^2)\pm I(\beta^2) +R(\gamma'^2)=0$. According to lemmas \ref{lem1}, \ref{lem2} $R(\alpha'^2)\pm I(\beta^2)+R(\gamma'^2)\equiv 2 \mod 4$. That is a contradiction.
\end{proof}

\begin{theorem}
The canonical representation of the equation $X^2+iY^2+Z^2=0$ has the form $X^2+iY^2=Z^2$ if we take the value $X, Z$ from $O^I$ and $Y=(1+i)^{2+\gamma_1} p_2^{\gamma_2}...p_m^{\gamma_m}$,  $p_i \in O^I$ and all $p_i$ are distinct primes. A solution of the equation $X^2+iY^2+Z^2=0$, with conditions $\gcd(X, Y)=1$, $XYZ\neq 0$ can be reduced to a solution of the equation $X^2+iY^2=Z^2$, with the same conditions and inversely.
\end{theorem}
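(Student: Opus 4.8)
The plan is to follow the same template already used for the canonical form of $X^2+Y^2+Z^2=0$, since Case 2 is governed by the identical parity relation $m+l\equiv 1\bmod 2$. I would start from an arbitrary $(\alpha,\beta,\gamma)\in S$ and apply Theorem \ref{thm11} to obtain the unique decompositions $\alpha=i^m\alpha'$, $\beta=i^n(1+i)^{2+k}\beta'$, $\gamma=i^l\gamma'$ with $m,n,l\in\{0,1\}$ and $\alpha',\beta',\gamma'\in O^I$; here the standing Case 2 hypothesis $\beta\equiv 0\bmod(1+i)^2$ is precisely what forces the $(1+i)$-exponent to be $2+k$ with $k\geq 0$.

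Next I would use the homogeneity of the equation to clear the power of $i$ on the middle entry. Multiplying $\alpha^2+i\beta^2+\gamma^2=0$ through by the unit $i^{-2n}$ gives $(i^{-n}\alpha)^2+i(i^{-n}\beta)^2+(i^{-n}\gamma)^2=0$, and since $i^{-n}\beta=(1+i)^{2+k}\beta'$ this reads $(-1)^{m-n}\alpha'^2+i\bigl((1+i)^{2+k}\beta'\bigr)^2+(-1)^{l-n}\gamma'^2=0$. Now I invoke the preceding lemma: $m+l\equiv 1\bmod 2$, so $(m-n)+(l-n)$ is odd and the two signs $(-1)^{m-n}$ and $(-1)^{l-n}$ are opposite. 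Hence exactly one of the outer squares carries a minus sign; transposing it to the right and relabelling $\{X,Z\}=\{\alpha',\gamma'\}$ — legitimate because $X^2$ and $Z^2$ enter $X^2+iY^2+Z^2$ with equal coefficients, so the first and third coordinates are interchangeable — turns the relation into $X^2+iY^2=Z^2$ with $X,Z\in O^I$ and $Y=(1+i)^{2+k}\beta'$, which is the asserted canonical shape. The converse is immediate: since $(iZ)^2=-Z^2$, any solution of $X^2+iY^2=Z^2$ satisfies $X^2+iY^2+(iZ)^2=0$, so $(X,Y,iZ)$ solves the original equation, and the coprimality and nonvanishing conditions persist under multiplication of a coordinate by the unit $i$.

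The one delicate point is unit and sign bookkeeping rather than any arithmetic depth. I must check that the twisted outer exponents $m-n$ and $l-n$ — which a priori lie in $\{-1,0,1\}$ — enter only through $(-1)^{m-n}$ and $(-1)^{l-n}$ after squaring, so that the single parity statement $m+l\equiv 1$ pins down the entire sign pattern; and I must confirm that after the transposition the two coefficients surviving on the $X^2,Z^2$ side are genuinely both $+1$, which is exactly what the opposite-sign property guarantees. Everything else reduces to the uniqueness furnished by Theorem \ref{thm11}, the preceding lemma, and the symmetry of the first and third coordinates.
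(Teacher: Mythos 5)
Your proposal is correct and follows essentially the same route as the paper's own proof: the unique decomposition from Theorem \ref{thm11}, normalization by a unit to clear the power of $i$ on the middle term, the parity lemma $m+l\equiv 1 \bmod 2$ to force opposite signs on the outer squares, and the interchange of the first and third coordinates to land on $X^2+iY^2=Z^2$, with the converse given by $(X,Y,iZ)$. Your version is in fact more careful than the paper's about the sign bookkeeping (tracking $(-1)^{m-n}$, $(-1)^{l-n}$ explicitly) and about why the gcd and nonvanishing conditions persist.
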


\begin{proof}
Let $(\alpha, \beta, \gamma)$ belong to $S$ and $\alpha=i^m \alpha'$, $\beta=i^n (1+i)^2 \beta'$, $\gamma=i^l \gamma'$, where $m, n, l$ take values $0, 1$ and $\alpha', \gamma' \in O^I$. Since $(\alpha, \beta, \gamma)$ is a solution so we can consider that $(i^{m-n} \alpha', (1+i)^2 \beta', i^{l-n} \gamma')$, where $m - n$, $l - n$ take values $0, 1$ is also a solution of the equation $X^2+iY^2+Z^2=0$. According lemma 4.8, $m + l \equiv 1 \mod 2$ so $(\alpha', (1+ i)^2 \beta', i\gamma')$ or $(i\alpha', (1+ i)^2 \beta', \gamma')$ is a solution of the equation $X^2+iY^2+Z^2=0$ and $(\alpha', (1+ i)^2 \beta',\gamma')$ is a solution of the equation $X^2+iY^2=Z^2$ thus $X = \alpha', Z = \gamma'$ from $O^I$ and $Y=(1+i)^2 \beta'$ or $(i\alpha', (1+ i)\beta', \gamma')$ is a solution of the equation $X^2+iY^2+Z^2=0$ and $(\gamma', (1+ i)\beta', \alpha')$ is a solution of $X^2+iY^2=Z^2$ thus $X = \gamma', Z = \alpha'$ from $O^I$ and $Y=(1+i)^2 \beta'$. Inversely it is also true.
\end{proof}

\subsection{The solutions of the equation $X^2+iY^2+Z^2=0$ in special variables}

Case 1: $Y \equiv 0 \mod (1+i)$ and  $Y \nequiv 0 \mod (1+i)^2$.

\begin{theorem}
A solution of the equation $X^2+Z^2=\pm iY^2$, $\gcd(X, Y, Z) \in U$ and $XYZ \neq 0$ can be represented in the  parametric form: $X=i^{t+1} \frac{P^2\pm(-1)^t iQ^2}{1+i}$, $Y=i^t \frac{P^2\mp(-1)^t iQ^2}{1+i}$, $Z=(1 + i)PQ$ where $P, Q$ are the products of the factors $p_2^{\gamma_2},...,p_m^{\gamma_m}$  belonging to $O^I$ and $\gcd(P, Q) = 1$.
\end{theorem}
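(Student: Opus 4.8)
The plan is to mirror the proof of the parametrization of $X^2+Y^2=Z^2$: factor the sum of two squares, reduce to a Mordell-type product equation with coprime factors, and invoke Lemma~\ref{lemMord}. First I would take a solution $(\alpha,\beta,\gamma)$ of the canonical equation in the Case~1 normalization, so that $\alpha,\gamma\in O^I$ while the even entry carries the factor $(1+i)$ to the first power only, say $\beta=(1+i)\beta'$ with $\beta'\in O^I$. The decisive algebraic move is the factorization $\alpha^2+\gamma^2=(\alpha+i\gamma)(\alpha-i\gamma)$, which rewrites the relation as $(\alpha+i\gamma)(\alpha-i\gamma)=\pm i\beta^2$, a product equal to a unit times a square.

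Second, I would pin down $\gcd(\alpha+i\gamma,\alpha-i\gamma)$. Using the parity results (Lemmas~\ref{lem1},~\ref{lem2} and Theorem~\ref{thm9}) one checks that when $\alpha,\gamma\in O^I$ both factors $\alpha\pm i\gamma$ are even, while the primitivity hypothesis $\gcd(\alpha,\beta,\gamma)\in U$ forces $\gcd(\alpha,\gamma)\in U$; hence the common factor is exactly one copy of $(1+i)$, the total power of $(1+i)$ on the right being $2$ since $\beta^2=(1+i)^2\beta'^2$. Dividing both factors by $1+i$ then produces $x=\frac{\alpha+i\gamma}{1+i}$ and $y=\frac{\alpha-i\gamma}{1+i}$ with $xy=\pm i\,\beta'^2$ and $\gcd(x,y)\in U$.

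Third, after absorbing the unit $\pm i$ into one factor, the equation $\tilde x\,y=\beta'^2$ with $\gcd(\tilde x,y)\in U$ and $\beta'\in G$ is exactly the hypothesis of Mordell's Lemma~\ref{lemMord} (taken with $k=1$, $V=\beta'$). Applying it gives $\tilde x=i^tP^2$, $y=i^{-t}Q^2$ with $\beta'=PQ$, $\gcd(P,Q)=1$, and $P,Q$ products of $O^I$-primes. Solving the linear system $\alpha+i\gamma=(1+i)x$, $\alpha-i\gamma=(1+i)y$ for $\alpha$ and $\gamma$, and recording the even entry as $(1+i)PQ$, yields the asserted parametric triple; the converse, that every such triple satisfies $X^2+Z^2=\pm iY^2$, is a direct substitution using $(1+i)^2=2i$.

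The main obstacle is this final bookkeeping: collecting the several powers of $i$ — the exponent from Mordell's Lemma, the one from absorbing $\pm i$, and the one hidden in the identity $\frac{1+i}{2}=\frac{i}{1+i}$ — into the single parameter $t$, and reducing them to the displayed pattern $\pm(-1)^t i$ over the common denominator $1+i$. Landing every unit in its exact printed position, rather than merely up to a unit, is delicate precisely because $i^2=-1$ keeps converting surplus factors of $i$ into the signs $(-1)^t$, and the correlation between the sign in $X^2+Z^2=\pm iY^2$ and the signs $\pm,\mp$ in the parametrization must be tracked consistently throughout.
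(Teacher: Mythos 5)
Your proposal is correct and follows essentially the same route as the paper: factor the sum of the two odd squares as $(\alpha+i\gamma)(\alpha-i\gamma)$, strip the powers of $(1+i)$ and the unit $\pm i$ to reach $xy=z^2$ with $\gcd(x,y)\in U$, invoke Lemma~\ref{lemMord} (the paper also uses it with $k=1$, $V$ the odd part of the even entry), and solve the resulting linear system for the parametrization. Your explicit verification that each factor carries exactly one copy of $(1+i)$ and that the quotients are coprime is detail the paper silently assumes, but it is the same argument.
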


\begin{proof}
Let $(\alpha, \beta, \gamma)$ be some solution of the equation $X^2 \pm iY^2+Z^2=0$. We have the equality
$\alpha^2+\beta^2=\pm i\gamma^2$ where $\alpha, \beta \in O^I$ and $\gamma=(1+i)p_2^{\gamma_2}...p_m^{\gamma_m}$,  $p_i \in O^I$ and all $p_i$ are distinct. This leads to: $xy = z^2$, $\gcd(x, y) \in U$
where $x=\frac{\alpha+i\beta}{1+i}$, $y=\pm\frac{\alpha-i\beta}{i(1+i)}$, $z=\frac{\gamma}{1+ i}$. According to lemma \ref{lemMord} there exists $t$ $(0\leq t \leq 3)$ such that $x'=\frac{\alpha+i\beta}{i^t (1+i)}$, $y'=\pm\frac{\alpha-i\beta}{i^{1-t} (1+i)}$ are faithful squares belong to $G$, $x'y'= xy$ and $\frac{\alpha+i\beta}{i^t (1+i)}=P^2$,  $\pm\frac{\alpha-i\beta}{i^{1-t} (1+i)} =Q^2$. $\alpha=i^{t+1}\frac{P^2\pm(-1)^t iQ^2}{1+i}$, $\beta=i^t \frac{P^2\mp(-1)^t iQ^2}{1+i}$, $\gamma=(1+i)PQ$  where $P, Q$ are the products of the factors  $p_2^{\gamma_2},...,p_m^{\gamma_m}$ belonging to $O^I$ and $\gcd(P, Q) = 1$.
\end{proof}

Case 2: $Y \equiv 0 \mod (1+i)^2$

\begin{theorem}
A solution of the equation $X^2+iY^2=Z^2$, $\gcd(X, Y) = 1$ and $XYZ \neq 0$  can be represented in the parametric form:  $X=i^{t+1} (P^2-(-1)^t iQ^2)$, $Y=(1+ i)^2 PQ$, $Z=i^{t+1} (P^2+(-1)^t iQ^2)$ 
where $P, Q$ are the products of factors $(1+i)^{\gamma_1}$; $p_2^{\gamma_2}$,...,$p_m^{\gamma_m}$ belonging to $O^I$ and $\gcd(P, Q) = 1$.
\end{theorem}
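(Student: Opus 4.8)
The plan is to reuse the template that worked for $X^2+Y^2=Z^2$ and for Case~1 of this equation: reduce to the canonical form, rewrite the equation as a difference of squares, strip off the common even factor to reach a Mordell-type product equation with coprime factors, and finish with Lemma~\ref{lemMord}. First I would invoke the canonical representation for Case~2 just proved to assume the solution $(\alpha,\beta,\gamma)$ has $\alpha,\gamma\in O^I$ and $\beta=(1+i)^{2+\gamma_1}p_2^{\gamma_2}\cdots p_m^{\gamma_m}$ with the $p_i\in O^I$ distinct. The equation then reads $\gamma^2-\alpha^2=i\beta^2$, that is, $(\gamma-\alpha)(\gamma+\alpha)=i\beta^2$. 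Setting $z=\beta/(1+i)^2\in G$ and dividing through by $(1+i)^4$ yields $xy=iz^2$ with $x=(\gamma-\alpha)/(1+i)^2$ and $y=(\gamma+\alpha)/(1+i)^2$.

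The crux, and the step I expect to cost the most care, is the coprimality $\gcd(x,y)\in U$; here the parity data carried by $O^I$ does the work. An element of $O^I$ has $R\equiv 1\bmod 4$ and hence $I$ even, so $R(\gamma\pm\alpha)$ and $I(\gamma\pm\alpha)$ are both even, whence $2\mid(\gamma\pm\alpha)$ and the power of $(1+i)$ dividing each of $\gamma\pm\alpha$ is at least $2$. On the other hand $(\gamma-\alpha)+(\gamma+\alpha)=2\gamma$ has $(1+i)$-valuation exactly $2$, so the two valuations cannot both exceed $2$; their minimum is exactly $2$, and therefore $(1+i)\nmid\gcd(x,y)$. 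Any common odd prime of $x,y$ would divide both $2\gamma$ and $2\alpha$, hence $\gcd(\alpha,\gamma)$, which is a unit because $\gcd(X,Y)=1$ and the equation force $\gcd(X,Z)\in U$. Combining the two observations gives $\gcd(x,y)\in U$.

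Finally, the unit $i$ in $xy=iz^2$ is absorbed by passing to $x_1=-ix$, which gives $x_1y=z^2$ with $\gcd(x_1,y)\in U$ and $z\in G$, so Lemma~\ref{lemMord} applies with $k=1$ (forcing $k_1=k_2=1$). It produces an integer $t$ with $(\gamma+\alpha)/\bigl(i^t(1+i)^2\bigr)=P^2$ and $(\gamma-\alpha)/\bigl(i^{3t+1}(1+i)^2\bigr)=Q^2$, where $P,Q\in G$, $\gcd(P,Q)=1$, and $z=PQ$. Solving the resulting linear system for $\alpha,\gamma$ and using $(1+i)^2=2i$ (so $(1+i)^2/2=i$ and $i^{3t+1}=(-1)^t i^{t+1}$) gives $\alpha=i^{t+1}\bigl(P^2-(-1)^t iQ^2\bigr)$, $\gamma=i^{t+1}\bigl(P^2+(-1)^t iQ^2\bigr)$, and $\beta=(1+i)^2PQ$, exactly the asserted parametrization; the converse is the direct verification that any such triple satisfies $X^2+iY^2=Z^2$.
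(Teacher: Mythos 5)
Your proof follows essentially the same route as the paper's: pass to the canonical form, factor $\gamma^2-\alpha^2=i\beta^2$, divide by $(1+i)^4$ to reach a Mordell-type product equation, apply Lemma~\ref{lemMord}, and solve the resulting linear system for $\alpha,\gamma$. If anything, your version is more complete than the paper's, since you explicitly verify $\gcd(x,y)\in U$ and correctly track the unit $i$ (the paper writes $xy=z^2$ where the identity actually gives $xy=iz^2$, compensating for this only implicitly through the asymmetric divisors $i^{1-t}$ and $i^{t}$).
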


\begin{proof}
Let $(\alpha, \beta, \gamma)$ be a solution of the equations then we have the following equality
$\alpha^2+i\beta^2=\gamma^2$, where  $\alpha, \gamma \in O^I$  and $\beta=(1+i)^{2+\gamma_1}p_2^{\gamma_2}...p_m^{\gamma_m}$, $p_i \in O^I$ and all $p_i$ are distinct. This leads to: $xy = z^2$, $\gcd(x, y) \in U$, where $x=\frac{\gamma-\alpha}{(1+i)^2}$ ,$ y=\frac{\gamma+\alpha}{(1+i)^2}$ , $z=\frac{\beta}{(1+i)^2}$. According to lemma \ref{lemMord}, there is $t$ such that $x'=\frac{\gamma-\alpha}{i^{1-t} (1+i)^2}$ ,  $y'=\frac{\gamma+\alpha}{i^t (1+i)^2}$  are faithful squares, belong to $G$ and $x'y'= xy$. Thus $\frac{\gamma-\alpha}{i^{1-t} (1+i)^2} = Q^2$, $\frac{\gamma+\alpha}{i^t (1+i)^2} =P^2$. $\alpha=i^{t+1} (P^2-(-1)^t iQ^2)$, $\beta=(1+i)^2 PQ$, $\gamma=i^{t+1} (P^2+(-1)^t iQ^2)$, where $P, Q$ are the products of factors $(1+i)^{\gamma_1},p_2^{\gamma_2},...,p_m^{\gamma_m}$,  $p_i \in O^I$ and $\gcd(P, Q) = 1$.
\end{proof}

\subsection{The Equation $X^2+(1 \pm i)Y^2+Z^2=0$} 

Let $S$ denote a set of solutions of the equation $X^2+(1\pm i)Y^2+Z^2=0$, where $(X, Y, Z)\in\mathbb{Z}[i]^3$, $XYZ \neq 0$ and $\gcd(X, Y, Z) \in U$.
 
\subsubsection{A canonical form of the equation $X^2+(1 \pm i)Y^2+Z^2=0$ in special variables}

\begin{lemma}
Let $(\alpha, \beta, \gamma)$ belong to $S$ then  $\beta\equiv 0 \mod (1+i)^2$. 
\end{lemma}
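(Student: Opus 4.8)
The plan is to reduce the divisibility claim to a statement about parities of real and imaginary parts and then play exactly the residue game of Lemmas \ref{lem1} and \ref{lem2}, as in the analogous lemmas for $X^2+Y^2+Z^2=0$ and $X^2+iY^2+Z^2=0$. First observe that $(1+i)^2=2i$ is an associate of $2$, so $\beta\equiv 0\bmod(1+i)^2$ is equivalent to $\beta\equiv 0\bmod 2$, i.e. to $\beta\in E_0$ (both $R(\beta)$ and $I(\beta)$ even); thus the goal is to show $\beta\in E_0$. As preparation I would record how multiplication by $1\pm i$ acts on a square: writing $\beta^2=a+bi$ one has $R((1+i)\beta^2)=a-b$, $I((1+i)\beta^2)=a+b$ and $R((1-i)\beta^2)=a+b$, $I((1-i)\beta^2)=b-a$. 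Feeding in the residues $(a,b)\bmod 4$ supplied by Lemmas \ref{lem1}, \ref{lem2} — namely $(\pm1,0)$ for $\beta\in O$, $(0,0)$ for $\beta\in E_0$, and $(0,2)$ for $\beta\in E_I$ — gives in all cases: $I((1\pm i)\beta^2)$ is odd when $\beta\in O$, it is $\equiv 2\bmod 4$ when $\beta\in E_I$, and it is $\equiv 0\bmod 4$ when $\beta\in E_0$, and these values are the same for both signs.

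The first step is to rule out $\beta\in O$. Take imaginary parts of $\alpha^2+(1\pm i)\beta^2+\gamma^2=0$. For any Gaussian integer the imaginary part of its square is even (Lemmas \ref{lem1}, \ref{lem2} give $I(\cdot^{\,2})\equiv 0$ or $2\bmod 4$ in every case), so $I(\alpha^2)$ and $I(\gamma^2)$ are even; hence $I((1\pm i)\beta^2)$ must be even, which by the table above forces $\beta$ to be even. Next I would pin down the parities of $\alpha$ and $\gamma$. Since $1\pm i\equiv 0\bmod(1+i)$, reducing the equation modulo $(1+i)$ collapses the middle term and leaves $\alpha^2\equiv\gamma^2\bmod(1+i)$; as squaring is the identity on $\mathbb{Z}[i]/(1+i)\cong\mathbb{F}_2$, this says $\alpha$ and $\gamma$ have the same parity. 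They cannot both be even, for then $\alpha,\beta,\gamma$ would all be divisible by $(1+i)$, contradicting $\gcd(\alpha,\beta,\gamma)\in U$; so $\alpha$ and $\gamma$ are both odd.

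The final step refines the imaginary-part congruence modulo $4$. With $\alpha,\gamma\in O$, Lemma \ref{lem2} gives $I(\alpha^2)\equiv I(\gamma^2)\equiv 0\bmod 4$, so the equation forces $I((1\pm i)\beta^2)\equiv 0\bmod 4$. If $\beta$ were in $E_I$ the table would give $I((1\pm i)\beta^2)\equiv 2\bmod 4$, a contradiction; hence $\beta\in E_0$, which is exactly $\beta\equiv 0\bmod(1+i)^2$. The main obstacle is bookkeeping rather than conceptual: one must carry both signs $1+i$ and $1-i$ through simultaneously and check that the residue of $I((1\pm i)\beta^2)\bmod 4$ comes out the same for both signs in each parity class, and one must not forget that the $\beta$-term vanishes modulo $(1+i)$, which is precisely what makes the parity link between $\alpha$ and $\gamma$ available.
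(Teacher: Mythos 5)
Your proof is correct and takes essentially the same route as the paper's: rule out $\beta\in O$ by a parity argument on imaginary parts, then rule out $\beta\in E_I$ by the same congruence taken modulo $4$, in both cases feeding the residues from Lemmas \ref{lem1} and \ref{lem2} into $I((1\pm i)\beta^2)$. The only difference is that you explicitly justify $\alpha,\gamma\in O$ (via reduction mod $(1+i)$ and the condition $\gcd(\alpha,\beta,\gamma)\in U$), a step the paper asserts without detailed argument.
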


\begin{proof}
Let $\alpha^2+(1\pm i) \beta^2+\gamma^2=0$ and $\beta\nequiv 0 \mod (1+i)$ so, according to theorem 10, $(1+i) \beta^2$ belongs to $E_I$ and $\alpha, \gamma \in O$. Then $I(\alpha^2+(1\pm i) \beta^2+\gamma^2)=I(\alpha^2)+I((1\pm i)\beta^2)+I(\gamma^2)=0$. Thus $I(\alpha^2)+I((1\pm i)\beta^2)+I(\gamma^2)\equiv0 \mod 2$. According to lemma 2.19, since $(1\pm i) \beta^2 \in E_I$ we have
$I(\alpha^2)+I((1\pm i)\beta^2)+I(\gamma^2)\equiv1 \mod 2$. That is a contradiction. 

Let $\beta \equiv 0 \mod(1+i)$ and $\beta\nequiv 0 \mod (1+i)^2$ then $\beta \in E_I$  and $I((1+i) \beta^2) \equiv 2 \mod 4$. Thus $I(\alpha^2)+I((1\pm i)\beta^2)+I(\gamma^2)\equiv2 \mod 4$. That is a contradiction. Thus $\beta\in E_0, \beta\equiv 0 \mod(1+i)^2$.
\end{proof}

\begin{lemma}\label{lem12}
Let $(\alpha, \beta, \gamma)$ belong to $S$. Since $\alpha=i^m \alpha'$, $\beta=i^n (1+i)^2 \beta'$, $\gamma=i^l \gamma'$, where $m, n, l$ take values $0, 1$ and $\alpha', \beta', \gamma'$ belong to $O^I$ then  $m + l \equiv 1 \mod 2$.
\end{lemma}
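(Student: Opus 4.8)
The plan is to follow the same contradiction scheme already used for the companion equations $X^2+Y^2+Z^2=0$ and $X^2+iY^2+Z^2=0$, reducing the defining equation modulo $4$ in its real component. By the preceding lemma every solution satisfies $\beta\equiv 0 \mod (1+i)^2$, and by Theorem \ref{thm11} the stated decomposition $\alpha=i^m\alpha'$, $\beta=i^n(1+i)^2\beta'$, $\gamma=i^l\gamma'$ with $\alpha',\beta',\gamma'\in O^I$ and $m,n,l\in\{0,1\}$ is unique. It therefore suffices to rule out the alternative $m+l\equiv 0 \mod 2$.

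First I would substitute the decomposition into $\alpha^2+(1\pm i)\beta^2+\gamma^2=0$. Since $\alpha^2=(-1)^m\alpha'^2$ and $\gamma^2=(-1)^l\gamma'^2$, the assumption $m+l\equiv 0 \mod 2$ forces $(-1)^m=(-1)^l=:\epsilon$, so the equation becomes $\epsilon(\alpha'^2+\gamma'^2)+(1\pm i)\beta^2=0$. Taking real parts and invoking Lemma \ref{lem2} — recalling $O^I\subset O_I$, so that $R(\alpha'^2)\equiv R(\gamma'^2)\equiv 1 \mod 4$ — yields $\epsilon\bigl(R(\alpha'^2)+R(\gamma'^2)\bigr)\equiv 2\epsilon\equiv 2 \mod 4$, independently of the sign of $\epsilon$.

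It then remains to show that the middle term contributes nothing modulo $4$. Writing $(1+i)^2=2i$ gives $\beta=2i^{n+1}\beta'$, whence $\beta^2=4\,i^{2(n+1)}\beta'^2$ is divisible by $4$ in $\mathbb{Z}[i]$; consequently $(1\pm i)\beta^2\equiv 0 \mod 4$, and in particular $R\bigl((1\pm i)\beta^2\bigr)\equiv 0 \mod 4$. Collecting the two contributions, the real part of the left-hand side is $\equiv 2+0\equiv 2 \mod 4$, which cannot vanish — the desired contradiction, forcing $m+l\equiv 1 \mod 2$.

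The only point requiring care is that the mixed coefficient $1\pm i$ does not disturb the vanishing of the $\beta$-term modulo $4$: this is safe precisely because the factor $(1+i)^2$ guaranteed by the preceding lemma already forces $\beta^2\equiv 0 \mod 4$ before multiplication by $1\pm i$. One must also keep both ambiguous signs — the $\pm$ in the equation and the sign $\epsilon$ — tracked simultaneously, but since each enters only a term that is either $\equiv 2$ or $\equiv 0 \mod 4$, neither ambiguity affects the final residue.
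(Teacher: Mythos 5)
Your proof is correct and follows essentially the same route as the paper: assume $m+l\equiv 0 \bmod 2$, reduce the real part of $\alpha'^2\pm(1\pm i)\beta^2+\gamma'^2=0$ modulo $4$, and obtain the contradiction $2\equiv 0 \bmod 4$ via Lemma \ref{lem2}. The only cosmetic difference is that you verify $R\bigl((1\pm i)\beta^2\bigr)\equiv 0 \bmod 4$ by the direct computation $\beta^2=4i^{2(n+1)}\beta'^2$, where the paper instead cites Lemma \ref{lem1} for $\beta\in E_0$ — the content is identical.
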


\begin{proof}
According to theorem \ref{thm11} the presentation $\alpha=i^m \alpha'$, $\beta=i^n (1+i)^2 \beta'$, $\gamma=i^l \gamma'$, where $\alpha', \gamma'$ belong to $O^I$ is uniquely. Let $m + l \equiv 0 \mod 2$ we have $\alpha'^2\pm(1\pm i)\beta^2+\gamma'^2=0$ then $R(\alpha'^2\pm(1\pm i) \beta^2+\gamma'^2)= R(\alpha'^2) \pm R((1\pm i)\beta^2) + R (\gamma'^2) =0$. According to lemmas \ref{lem1}, \ref{lem2} $R(\alpha'^2)\pm R((1\pm i)\beta^2)+R (\gamma'^2)\equiv2 \mod 4$. That is a contradiction.
\end{proof}

\begin{theorem}\label{thm19}
The canonical representation of the equation $X^2+(1\pm i)Y^2+Z^2=0$ has the form $X^2+(1\pm i)Y^2=Z^2$
if we take the value $X, Z$ from $O^I$ and $Y=(1+i)^{2+\gamma_1} p_2^{\gamma_2}...p_m^{\gamma_m}$,   $p_i\in O^I$ and all $p_i$ are distinct primes. Any solution of the equation  $X^2+(1\pm i)Y^2+Z^2=0$, with conditions $\gcd(X, Y)=1$, $XYZ \neq 0$ can be reduced to a solution of the equation $X^2+(1\pm i)Y^2=Z^2$, with the same conditions and inversely.
\end{theorem}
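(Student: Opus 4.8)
The plan is to follow closely the template already established for the two preceding canonical-form theorems (the one reducing $X^2+Y^2+Z^2=0$ to $X^2+Y^2=Z^2$ and Case~2 reducing $X^2+iY^2+Z^2=0$ to $X^2+iY^2=Z^2$). All of the genuinely arithmetic content has already been extracted into the two lemmas immediately preceding: the first guarantees that the middle variable satisfies $\beta\equiv 0\pmod{(1+i)^2}$, and Lemma~\ref{lem12} pins down the parity relation $m+l\equiv 1\pmod 2$. The theorem itself is then a bookkeeping assembly of these two facts, so I expect the write-up to be short and parallel to its predecessors.

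First I would take an arbitrary $(\alpha,\beta,\gamma)\in S$ and invoke Theorem~\ref{thm11} together with the preceding lemma to write it uniquely as $\alpha=i^m\alpha'$, $\beta=i^n(1+i)^{2+\gamma_1}\beta'$, $\gamma=i^l\gamma'$ with $\alpha',\gamma'\in O^I$ and $m,n,l\in\{0,1\}$. Next I would exploit the homogeneity of the equation: scaling all three variables by a common unit $u$ sends a solution to a solution, since $u^2\big(\alpha^2+(1\pm i)\beta^2+\gamma^2\big)=0$ and the coefficient $1\pm i$ is untouched. Taking $u=i^{-n}$ clears the power of $i$ from the middle term and produces the solution $\big(i^{m-n}\alpha',\,(1+i)^{2+\gamma_1}\beta',\,i^{l-n}\gamma'\big)$. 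Now I would apply Lemma~\ref{lem12}: from $m+l\equiv 1\pmod 2$ we get $(m-n)+(l-n)\equiv 1\pmod 2$, so exactly one of the two residual exponents is odd. Since only the squares of the variables enter the equation, signs and the factor $i^2=-1$ are harmless, and the lone surviving factor of $i$ can be absorbed by moving a square across the equality; this is what converts $\alpha'^2+(1\pm i)\big((1+i)^{2+\gamma_1}\beta'\big)^2+\gamma'^2=0$ into $X^2+(1\pm i)Y^2=Z^2$. According to which of $\alpha'$, $\gamma'$ carries the odd exponent, either $(\alpha',(1+i)^{2+\gamma_1}\beta',\gamma')$ or the swapped triple $(\gamma',(1+i)^{2+\gamma_1}\beta',\alpha')$ is the resulting solution, so in both cases $X,Z\in O^I$ and $Y=(1+i)^{2+\gamma_1}\beta'$. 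The converse is immediate: moving $Z^2$ back to the left and rescaling by a unit recovers a solution of $X^2+(1\pm i)Y^2+Z^2=0$ meeting the same coprimality and nonvanishing conditions.

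The one place that needs care is the unit bookkeeping in the middle step, namely verifying that after clearing $i^n$ from $\beta$ and invoking the parity relation one can always land both outer variables in $O^I$, possibly only after interchanging the roles of $X$ and $Z$. This, however, is exactly the job that Lemma~\ref{lem12} was designed to do, so the real obstacle has already been discharged upstream. The only feature distinguishing this argument from the pure $X^2+Y^2+Z^2=0$ case is that the coefficient $1\pm i$ multiplies $Y^2$ and is itself inert under the unit scaling; consequently the sign $\pm$ simply rides through the entire reduction unchanged, which is why a single argument disposes of both signs at once. I would flag this inertness explicitly, since it is the whole reason the two sign cases need not be treated separately.
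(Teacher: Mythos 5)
Your proof is correct and takes essentially the same route as the paper's own: write the solution via Theorem~\ref{thm11} and the preceding lemma as $\alpha=i^m\alpha'$, $\beta=i^n(1+i)^{2+\gamma_1}\beta'$, $\gamma=i^l\gamma'$, scale by the unit $i^{-n}$ to clear the middle exponent, invoke Lemma~\ref{lem12} for $m+l\equiv 1\pmod 2$, and then case-split on which of $\alpha'$, $\gamma'$ carries the odd power of $i$, absorbing $i^2=-1$ by moving that square across the equality (swapping the roles of $X$ and $Z$ in the second case), with the converse handled by the same unit rescaling. The only cosmetic difference is that you carry the full factor $(1+i)^{2+\gamma_1}$ where the paper writes $(1+i)^2\beta'$, and your explicit remark that the coefficient $1\pm i$ is inert under unit scaling is a useful clarification the paper leaves implicit.
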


\begin{proof}
Let $(\alpha, \beta, \gamma)$ belong to $S$ and $\alpha=i^m \alpha'$, $\beta=i^n (1+i)^2 \beta'$, $\gamma=i^l \gamma'$ where $m, n, l$ take values $0, 1$ and $\alpha', \gamma' \in O^I$. Since $(\alpha, \beta, \gamma)$ is a solution so we can consider that $(i^{m-n} \alpha'$, $(1+i)^2 \beta'$, $i^{l-n} \gamma')$, where $m - n$, $l - n$ take values $0, 1$ is also a solution of the equation $X^2+(1\pm i)Y^2+Z^2=0$. According lemma 4.13, $m + l \equiv 1 \mod 2$ so $(\alpha', (1+i)^2 \beta',i\gamma')$ or $(i\alpha', (1+i)^2 \beta',\gamma')$ is a solution of the equation $X^2+(1\pm i)Y^2+Z^2=0$ and $(\alpha', (1+i)^2 \beta',\gamma')$ is a solution of the equation $X^2+(1\pm i)Y^2=Z^2$ thus $X = \alpha'$, $Z = \gamma'$ from $O^I$ and $Y=(1+i)^2 \beta'$ or $(i\alpha', (1+i)^2 \beta',\gamma')$ is a solution of $X^2+(1\pm i)Y^2+Z^2=0$ and $(\gamma', (1+i)^2 \beta',\alpha')$ is a solution of $X^2+(1\pm i)Y^2+Z^2=0$ thus $X = \gamma'$, $Z = \alpha'$ from $O^I$ and $Y=(1+i)^2 \beta'$. Inversely it is also true. 
\end{proof}

\subsubsection{The solutions of the equation $X^2+(1 \pm i)Y^2+Z^2=0$  in special variables}

\begin{theorem}
Any solution of the equation $X^2+(1\pm i)Y^2=Z^2$, $\gcd(X, Y, Z) \in U$ and $XYZ \neq 0$  can be represented  in the parametric from: $X = i^{t+1} (P^2-(-1)^t (1\pm i) Q^2 )$, $Y=(1 +i)^2 PQ$, $Z = i^{t+1} (P^2+(-1)^t (1\pm i) Q^2 )$, where $P, Q$ are the products of the factors $(1+i)^{\gamma_1}, p_2^{\gamma_2},...,p_m^{\gamma_m}$,  $p_i \in O^I$  and all $p_i$ are distinct primes, $\gcd(P, (1+ i)Q) =1$.  
\end{theorem}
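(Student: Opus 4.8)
The plan is to follow the template of the two preceding parametrization theorems verbatim, the single new feature being that the coefficient $1\pm i$ is not a unit but carries one factor of the prime $1+i$; this is what forces the $(1+i)$-adic bookkeeping to differ from the $X^2+iY^2=Z^2$ case. First I would invoke Theorem \ref{thm19} to put an arbitrary solution $(\alpha,\beta,\gamma)$ into canonical form, so that $\alpha,\gamma\in O^I$, $\beta=(1+i)^{2+\gamma_1}p_2^{\gamma_2}\cdots p_m^{\gamma_m}$ with the $p_i\in O^I$ distinct, and (as one checks from the relation and $\gcd(X,Y)=1$) $\gcd(\alpha,\gamma)\in U$. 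Writing the equation as a difference of squares gives $(\gamma-\alpha)(\gamma+\alpha)=(1\pm i)\beta^2$. Since $\alpha,\gamma\in O^I$ have real part $\equiv1\bmod4$ and even imaginary part, Lemmas \ref{lem1} and \ref{lem2} give $R(\gamma-\alpha)\equiv0\bmod4$ and $R(\gamma+\alpha)\equiv2\bmod4$ with both imaginary parts even, so both factors are divisible by $(1+i)^2$.

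Next I would carry out the valuation analysis, which is the heart of the proof. Let $v$ denote the $(1+i)$-adic valuation. Because $v(\beta)=2+\gamma_1$, the right-hand side has valuation $5+2\gamma_1$, an odd number, so $v(\gamma-\alpha)$ and $v(\gamma+\alpha)$ have opposite parity; as their gcd divides $2\gcd(\alpha,\gamma)$, which is an associate of $(1+i)^2$, the smaller valuation is exactly $2$ and the larger is odd and $\ge3$. Identifying the factor of even valuation — say $\gamma+\alpha$ — with valuation exactly $2$, I would set $x=\frac{\gamma-\alpha}{(1\pm i)(1+i)^2}$, $y=\frac{\gamma+\alpha}{(1+i)^2}$, $z=\frac{\beta}{(1+i)^2}$. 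By construction these lie in $\mathbb{Z}[i]$, $z\in G$, and $xy=z^2$ with $\gcd(x,y)\in U$: the $(1+i)$-parts are disjoint by the valuation split, and the odd parts are coprime because $\gcd(\gamma-\alpha,\gamma+\alpha)$ is a power of $1+i$. The point of peeling off $1\pm i$ before invoking the lemma is exactly to reduce to Mordell's Lemma with trivial constant $k=1$ rather than $k=1\pm i$, which would violate the hypothesis $\gcd(k,V)\in U$ whenever $\gamma_1>0$.

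Then I would apply Lemma \ref{lemMord} to $xy=z^2$: there is an integer $t$ with $y=i^{t}P^2$, $x=i^{-t}Q^2$, $z=PQ$, $\gcd(P,Q)\in U$, $P,Q\in G$. Back-substituting through $\gamma+\alpha=(1+i)^2y=2i^{t+1}P^2$ and $\gamma-\alpha=(1\pm i)(1+i)^2x=2i^{t+1}(-1)^t(1\pm i)Q^2$ (using $(1+i)^2=2i$ and $i^{1-t}=(-1)^t i^{t+1}$) and solving for $\alpha=X$, $\gamma=Z$ yields exactly $X=i^{t+1}\!\left(P^2-(-1)^t(1\pm i)Q^2\right)$, $Z=i^{t+1}\!\left(P^2+(-1)^t(1\pm i)Q^2\right)$, and $Y=\beta=(1+i)^2PQ$. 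Since $y=i^{t}P^2$ has valuation $0$, $P$ is coprime to $1+i$, and together with $\gcd(P,Q)\in U$ this gives $\gcd(P,(1+i)Q)=1$; conversely, any $P,Q\in G$ with this property produce a solution, as one verifies by direct substitution into $X^2+(1\pm i)Y^2$.

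I expect the main obstacle to be the valuation step: proving rigorously that $\{v(\gamma-\alpha),v(\gamma+\alpha)\}=\{2,\text{ odd}\ge3\}$ and deciding \emph{which} factor receives the odd valuation, i.e. absorbs $1\pm i$. Both assignments are a priori consistent with the real-part congruences $R(\gamma-\alpha)\equiv0$, $R(\gamma+\alpha)\equiv2\bmod4$, so one must descend to $I(\gamma\pm\alpha)\bmod4$ via the relation $\alpha^2+(1\pm i)\beta^2=\gamma^2$. The opposite assignment is then reduced to the chosen one by interchanging the two factors, which corresponds to the relabeling $P\leftrightarrow Q$ and reproduces the same family, the asymmetric condition $\gcd(P,(1+i)Q)=1$ being precisely the record that $P$ is the odd parameter. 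This is the only point where the non-unit nature of $1\pm i$ demands an argument genuinely different from the purely unit-valued coefficient of the $X^2+iY^2=Z^2$ theorem.
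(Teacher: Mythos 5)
Your overall route is the same as the paper's (canonical form via Theorem \ref{thm19}, the factorization $(\gamma-\alpha)(\gamma+\alpha)=(1\pm i)\beta^2$, division by $(1\pm i)(1+i)^2$ and $(1+i)^2$, then Lemma \ref{lemMord} and back-substitution), and you correctly isolate the one genuinely new difficulty: deciding which of $\gamma\mp\alpha$ carries the odd $(1+i)$-adic valuation, i.e.\ absorbs the non-unit coefficient $1\pm i$. The paper passes over this in silence, simply writing $x=\frac{\gamma-\alpha}{(1\pm i)(1+i)^2}$ as if it were automatically a Gaussian integer. But your proposed resolution of this point is wrong. You claim the opposite assignment ``is reduced to the chosen one by interchanging the two factors, which corresponds to the relabeling $P\leftrightarrow Q$.'' That relabeling does work for the symmetric equations $X^2+Y^2=Z^2$ and $X^2+iY^2=Z^2$, where swapping the factors amounts to $X\mapsto -X$ and one checks $-X=i^{t'+1}(P'^2-(-1)^{t'}Q'^2)$ with $t'\equiv -t \bmod 4$, $P'=Q$, $Q'=P$. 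Here it fails, precisely because $1\pm i$ is attached to $Q^2$ only: every member of the stated family satisfies $Z+X=2i^{t+1}P^2$ (even valuation) and $Z-X=2i^{t+1}(-1)^t(1\pm i)Q^2$ (odd valuation), and no relabeling can flip that parity. Interchanging the factors produces $(-X,Y,Z)$, which lies outside the family.

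Moreover, the unwanted case cannot be excluded by the congruence analysis you sketch, because it actually occurs. Take the $+$ sign and $(X,Y,Z)=(1,\,2i,\,1-2i)$: this is a solution in canonical form ($1,\,1-2i\in O^I$, $Y=(1+i)^2$, $\gcd(X,Y,Z)=1$, $XYZ\neq 0$), since $1+(1+i)(2i)^2=-3-4i=(1-2i)^2$; here $v(Z-X)=v(-2i)=2$ is even while $v(Z+X)=v(2-2i)=3$ is odd. It is not representable in the stated parametric form: $Y=(1+i)^2PQ$ forces $P=Q=1$, and then $t=0,1,2,3$ yield $(X,Z)=(1,-1+2i),\ (-2-i,\,i),\ (-1,1-2i),\ (2+i,-i)$, never $(1,1-2i)$ --- equivalently, $Z-X$ in the family always has odd valuation. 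So the theorem as printed is false; the paper's proof breaks on exactly this example, where $x=\frac{\gamma-\alpha}{(1\pm i)(1+i)^2}=\frac{-1+i}{2}\notin\mathbb{Z}[i]$, and your proof cannot be completed as described. A correct statement requires a second family obtained by running the same argument when $v(\gamma+\alpha)$ is odd (equivalently, allowing $X\mapsto -X$, so that $X=\pm\,i^{t+1}(P^2-(-1)^t(1\pm i)Q^2)$ with matching signs), rather than a reduction of that case to the first.
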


\begin{proof}
Let $(\alpha, \beta, \gamma)$ be a solution of the equation. Thus we have the equality
$\alpha^2+(1\pm i) \beta^2=\gamma^2$ where $\alpha, \gamma$ belong $O^I$ and $\beta=(1+i)^{2+\gamma_1} p_2^{\gamma_2}...p_m^{\gamma_m}$,  $p_i \in O^I$  and all $p_i$ are distinct primes $xy=z^2$, $\gcd(x, y)\in U$, where $x=\frac{\gamma-\alpha}{(1\pm i)(1+i)^2}$, $y=\frac{\gamma+\alpha}{(1+i)^2}$ , $z=\frac{\beta}{(1+i)^2}$ . According to lemma \ref{lemMord} there is an integer $t$ such that $x'=\frac{\gamma-\alpha}{i^t (1\pm i)(1+i)^2}$ ,  $y'=\frac{\gamma+\alpha}{i^{-t} (1+i)^2}$  are faithful squares, belong to $G$ and $x'y'= xy$. Thus $\frac{\gamma-\alpha}{i^t (1\pm i)(1+i)^2} =Q^2$, $\frac{\gamma+\alpha}{i^{-t} (1+i)^2} =P^2$ and 
$\alpha= i^{t+1} (P^2-(-1)^t (1\pm i) Q^2)$, $\beta=(1+i)^2 PQ$, $\gamma= i^{t+1} (P^2+(-1)^t (1\pm i) Q^2)$.
where $P, Q$ are the products of the factors $(1+i)^{\gamma_1},p_2^{\gamma_2},...,p_m^{\gamma_m}$,  $p_i\in O^I$ and all $p_i$ are distinct primes and $\gcd(P, (1+ i)Q) = 1$.
\end{proof}

\subsection{The Equation $X^2+iY^2+(1+i)Z^2=0$} Let $S$ denote the set of all solutions of the equation $X^2+iY^2+(1+i)Z^2=0$, where $(X, Y, Z) \in \mathbb{Z}[i]^3$,  $XYZ\neq 0$ and $\gcd(X, Y, Z) \in U$. 

\subsubsection{The canonical form of the equation $X^2+iY^2+(1+i)Z^2=0$ in special variables}

\begin{lemma}
Let $(\alpha, \beta, \gamma)$ belong to $S$ then $\alpha\beta\gamma \nequiv 0 \mod(1+ i)$. 
\end{lemma}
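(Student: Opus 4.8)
The plan is to split the defining equation into its real and imaginary parts and then reduce each part modulo $2$, where the squaring data from Lemmas \ref{lem1} and \ref{lem2} collapses to a single clean parity statement. Using $R(iw)=-I(w)$, $I(iw)=R(w)$, $R((1+i)w)=R(w)-I(w)$ and $I((1+i)w)=R(w)+I(w)$, the equation $\alpha^2+i\beta^2+(1+i)\gamma^2=0$ separates into
\begin{equation*}
R(\alpha^2)-I(\beta^2)+R(\gamma^2)-I(\gamma^2)=0, \qquad I(\alpha^2)+R(\beta^2)+R(\gamma^2)+I(\gamma^2)=0.
\end{equation*}

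Next I would record the uniform parity consequences of the two squaring lemmas, valid for every Gaussian integer $w$: the quantity $I(w^2)$ is always even (it is $\equiv 0 \mod 4$ when $w\in O$ by Lemma \ref{lem2}, and $\equiv 0$ or $2 \mod 4$ when $w\in E$ by Lemma \ref{lem1}), whereas $R(w^2)$ is odd precisely when $w$ is odd and even precisely when $w$ is even (since $R(w^2)\equiv \pm 1 \mod 4$ for $w\in O$ and $R(w^2)\equiv 0 \mod 4$ for $w\in E$). Reducing the real-part relation modulo $2$ then annihilates every $I(\cdot)$ term and leaves $R(\alpha^2)+R(\gamma^2)\equiv 0 \mod 2$, so $\alpha$ and $\gamma$ have the same parity; reducing the imaginary-part relation modulo $2$ similarly leaves $R(\beta^2)+R(\gamma^2)\equiv 0 \mod 2$, so $\beta$ and $\gamma$ have the same parity.

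Combining these, all of $\alpha,\beta,\gamma$ share one common parity. The final step is to eliminate the even alternative: were $\alpha,\beta,\gamma$ all even, then $(1+i)$ would divide each of them and hence divide $\gcd(\alpha,\beta,\gamma)$, contradicting $\gcd(\alpha,\beta,\gamma)\in U$. Therefore all three are odd, which is exactly the assertion $\alpha\beta\gamma \nequiv 0 \mod (1+i)$.

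I expect the only delicate point to be the bookkeeping needed to see that the parity facts about $R(w^2)$ and $I(w^2)$ hold uniformly across the even and odd cases, so that Lemmas \ref{lem1} and \ref{lem2} must be applied together rather than separately; once that observation is in place, the modulo-$2$ reduction of both coordinate equations and the coprimality argument are entirely routine.
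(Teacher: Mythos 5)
Your proof is correct, and it reaches the conclusion by a genuinely cleaner route than the paper's. The paper argues by contradiction in two stages: first it claims $\alpha$ and $\beta$ must both be odd, asserting that $\gcd(\alpha,\beta,\gamma)\in U$ forces $\gcd(\alpha,\beta)\in U$ and $\alpha\beta \nequiv 0 \bmod (1+i)$ --- a step which, as literally written, is a non sequitur and really needs the equation itself (if $(1+i)$ divides one of $\alpha,\beta$ then it divides the other, and then also $\gamma$, contradicting the gcd condition); then, with $\gamma$ as the only candidate even variable, it reduces only the real-part equation modulo $2$ to get the contradiction $R(\alpha^2)\equiv 0 \bmod 2$. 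You instead reduce both coordinate equations modulo $2$: the real part shows $\alpha$ and $\gamma$ share a parity, the imaginary part shows $\beta$ and $\gamma$ share a parity, and the gcd hypothesis is invoked exactly once, at the end, to exclude the all-even case. The toolkit is the same (splitting into $R$ and $I$, the parity consequences of Lemmas \ref{lem1} and \ref{lem2}, the gcd condition), but your symmetric use of both coordinates replaces the paper's divisibility argument for $\alpha,\beta$, thereby repairing its shakiest step and avoiding case analysis; what the paper's version buys in exchange is that it only ever needs the real-part equation. Your uniform parity observations ($I(w^2)$ always even, $R(w^2)$ odd exactly when $w$ is odd) do hold across both lemmas, so the one point you flagged as delicate goes through without trouble.
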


\begin{proof}
Let $\alpha\beta\gamma \equiv 0 \mod(1+i)$. As $\gcd(\alpha, \beta, \gamma) = 1$ so $\gcd(\alpha, \beta) = 1$ and $\alpha\beta \nequiv 0 \mod (1+i)$. Thus $\alpha, \beta$ belong to $O$ then $R(\alpha^2+i\beta^2+(1+i)  \gamma^2)= R(\alpha^2 )+R(i\beta^2 )+R((1+i) \gamma^2 )=R(\alpha^2 )-I(\beta^2 )+R(\gamma^2 )-I(\gamma^2 )=0$ and $R(\alpha^2+i\beta^2+(1+i) \gamma^2)=R(\alpha^2)-I(\beta^2)+R(\gamma^2)-I(\gamma^2)\equiv0 \mod 2$. Since $\gamma\equiv0 \mod (1+i)$  then $R(\alpha^2)\equiv0 \mod 2$ but $\alpha$ belongs to $O$ so $R(\alpha^2)\equiv1 \mod 2$. That is a contradiction. Thus $\gamma$ belongs to $O$.
\end{proof}

\begin{lemma}
Let $(\alpha, \beta, \gamma)$ belong to $S$. Since $\alpha=i^m \alpha'$, $\beta=i^n \beta'$, $\gamma=i^l \gamma'$, where $m, n, l$ take values $0, 1$ and $\alpha', \beta', \gamma'$ belong to $O^I$ then $m+l \equiv n+l \equiv1 \mod 2$.
\end{lemma}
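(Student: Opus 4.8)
The plan is to mirror the strategy of Lemmas \ref{lem1}--\ref{lem2} and of the parity lemmas already proved for the earlier equations: reduce the unit ambiguity to a sign by squaring, then read off the parities of $m,n,l$ from the real and imaginary parts of the equation taken modulo $4$. By the preceding lemma every solution in $S$ has $\alpha,\beta,\gamma$ odd, so the corollary giving the unique decomposition $\alpha=i^{m}\alpha'$ with $\alpha'\in O^I$ applies to all three variables. Squaring collapses the unit, since $i^{2m}=(-1)^m$, so only the parity of the exponent survives and I may take $m,n,l\in\{0,1\}$; substituting into $\alpha^2+i\beta^2+(1+i)\gamma^2=0$ gives $(-1)^m\alpha'^2+(-1)^n i\beta'^2+(-1)^l(1+i)\gamma'^2=0$.

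Next I would record the relevant residues. Since $O^I\subset O_I$, Lemma \ref{lem2} yields $R(\alpha'^2)\equiv R(\beta'^2)\equiv R(\gamma'^2)\equiv 1\pmod 4$ and $I(\alpha'^2)\equiv I(\beta'^2)\equiv I(\gamma'^2)\equiv 0\pmod 4$. I then separate the displayed equation into its real and imaginary parts, using $R(iw)=-I(w)$, $I(iw)=R(w)$ and $R((1+i)w)=R(w)-I(w)$, $I((1+i)w)=R(w)+I(w)$. The real part becomes $(-1)^m R(\alpha'^2)-(-1)^n I(\beta'^2)+(-1)^l\bigl(R(\gamma'^2)-I(\gamma'^2)\bigr)=0$ and the imaginary part becomes $(-1)^m I(\alpha'^2)+(-1)^n R(\beta'^2)+(-1)^l\bigl(R(\gamma'^2)+I(\gamma'^2)\bigr)=0$.

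Reducing both identities modulo $4$ finishes the argument. In the real part every imaginary-part contribution is $\equiv 0$, leaving $(-1)^m+(-1)^l\equiv 0\pmod 4$; since a sum of two signs is $0$ modulo $4$ exactly when the signs are opposite (otherwise it is $\equiv 2$), this forces $m\not\equiv l$, i.e. $m+l\equiv 1\pmod 2$. The imaginary part collapses in the same way to $(-1)^n+(-1)^l\equiv 0\pmod 4$, giving $n+l\equiv 1\pmod 2$. Equivalently, one can run each conclusion by contradiction, as in Lemma \ref{lem12} and the earlier parity lemmas: assuming $m+l\equiv 0$ (resp. $n+l\equiv 0$) produces a residue $\equiv 2\pmod 4$ against the required $0$.

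The one point requiring care is the $(1+i)$ coefficient, which, unlike the $1$ and $i$ coefficients, feeds $R(\gamma'^2)$ into both the real and the imaginary part simultaneously. The argument goes through cleanly only because the accompanying $I(\gamma'^2)$ cross-terms are $\equiv 0\pmod 4$, so that in each component the $\gamma'$-contribution reduces to the single sign $(-1)^l$; checking this vanishing, together with $I(\alpha'^2)\equiv I(\beta'^2)\equiv 0$ so that the ``wrong'' parts of $\alpha'^2$ and $\beta'^2$ drop out of the respective components, is the crux — and it is precisely what Lemma \ref{lem2} supplies. Everything else is routine sign bookkeeping.
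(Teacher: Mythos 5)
Your proof is correct and follows essentially the same route as the paper: write $\alpha=i^m\alpha'$, $\beta=i^n\beta'$, $\gamma=i^l\gamma'$ with $\alpha',\beta',\gamma'\in O^I$ via the unique decomposition, then split $(-1)^m\alpha'^2+(-1)^n i\beta'^2+(-1)^l(1+i)\gamma'^2=0$ into real and imaginary parts and reduce modulo $4$ using Lemma \ref{lem2}. The only cosmetic difference is that you conclude the parities directly from $(-1)^m+(-1)^l\equiv(-1)^n+(-1)^l\equiv 0 \pmod 4$, whereas the paper assumes $m+l\equiv 0$ (resp. $n+l\equiv 0$) and derives the contradiction $2\equiv 0 \pmod 4$ — the same computation phrased contrapositively.
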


\begin{proof}
According to theorem \ref{thm11}, the presentation $\alpha=i^m \alpha'$, $\beta=i^n \beta'$, $\gamma=i^l \gamma'$, where $\alpha', \beta', \gamma'$ belong to $O^I$ is uniquely. Let $m+l \equiv 0 \mod 2$  we have $\alpha'^2\pm i\beta'^2+(1+i)  \gamma'^2=0$. Hence $R(\alpha'^2\pm i\beta'^2+(1+i) \gamma'^2)=R(\alpha'^2)\mp I(\beta'^2)+R(\gamma'^2)-I(\gamma'^2)\equiv0 \mod 4$.

Therefore, $R(\alpha'^2)+R(\gamma'^2)\equiv0 \mod 4$ but $\alpha', \gamma'\in O^I$ and $R(\alpha'^2)+R(\gamma'^2)\equiv2 \mod 4$. That is a contradiction. 

Let $n+l\equiv0 \mod 2$, $\pm\alpha'^2+i\beta'^2+(1+i)  \gamma'^2=0$ so $I(\pm\alpha'^2+i\beta'^2+(1+i)  \gamma'^2)=\pm I(\alpha'^2)+ R(\beta'^2)+I(\gamma'^2)+R(\gamma'^2) \equiv0 \mod 4$. $R(\beta'^2)+R(\gamma'^2)\equiv0 \mod 4$ but $\beta', \gamma' \in O^I$ and $R(\beta'^2)+R(\gamma'^2)\equiv2 \mod 4$. That is a contradiction. 
\end{proof}

\begin{theorem}
The canonical representation of the equation $X^2+iY^2+(1+i)Z^2=0$ has the form $X^2+iY^2=(1+i)Z^2$ if we take the values $X, Y, Z$ from $O^I$. Any solution of the equation $X^2+iY^2+(1+i)Z^2=0$, with conditions $\gcd(X, Y)=1$, $XYZ\neq0$ can be reduced to a solution of the equation $X^2+iY^2=(1+i)Z^2$, with the same conditions and inversely.
\end{theorem}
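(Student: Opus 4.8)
The plan is to follow the same template used for the earlier three-variable equations in this section: substitute the canonical $i$-representations supplied by the two preceding lemmas into the equation, and then let the parity relations fix all the units. Concretely, by the first lemma of this subsection every solution $(\alpha,\beta,\gamma)\in S$ satisfies $\alpha\beta\gamma\nequiv 0\bmod(1+i)$, so $\alpha,\beta,\gamma\in O$ and, by the corollary to the submonoid theorem (equivalently Theorem \ref{thm11}), I may write $\alpha=i^{m}\alpha'$, $\beta=i^{n}\beta'$, $\gamma=i^{l}\gamma'$ with $\alpha',\beta',\gamma'\in O^{I}$ and $m,n,l\in\{0,1\}$; the second lemma then supplies the two parity constraints $m+l\equiv 1$ and $n+l\equiv 1\pmod 2$. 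Note that, in contrast to the previous equations, no variable here carries a factor of $(1+i)$, so the whole argument reduces to bookkeeping of the powers of $i$.

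First I would substitute these representations into $\alpha^{2}+i\beta^{2}+(1+i)\gamma^{2}=0$, obtaining $i^{2m}\alpha'^{2}+i^{2n+1}\beta'^{2}+(1+i)i^{2l}\gamma'^{2}=0$. Since the equation is homogeneous of degree two, scaling all three variables by the common unit $i^{-l}$ preserves solutions; equivalently I multiply through by $i^{-2l}=(-1)^{l}$ to normalize the $\gamma'$ term, obtaining $i^{2(m-l)}\alpha'^{2}+i^{2(n-l)+1}\beta'^{2}+(1+i)\gamma'^{2}=0$. The second lemma is exactly what is needed at this step: because $m-l$ and $n-l$ are both odd, $i^{2(m-l)}=-1$ and $i^{2(n-l)+1}=-i$, so the equation collapses to $-\alpha'^{2}-i\beta'^{2}+(1+i)\gamma'^{2}=0$, that is, $\alpha'^{2}+i\beta'^{2}=(1+i)\gamma'^{2}$. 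This is the asserted canonical form with $X=\alpha'$, $Y=\beta'$, $Z=\gamma'$ all lying in $O^{I}$, and the conditions $\gcd(X,Y)=1$ and $XYZ\neq 0$ are inherited from the original solution up to units.

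For the converse I would simply reverse the normalization: if $\alpha',\beta',\gamma'\in O^{I}$ satisfy $\alpha'^{2}+i\beta'^{2}=(1+i)\gamma'^{2}$, then setting $Z=i\gamma'$ gives $(1+i)(i\gamma')^{2}=-(1+i)\gamma'^{2}$, so $(\alpha',\beta',i\gamma')$ solves $X^{2}+iY^{2}+(1+i)Z^{2}=0$ with the same gcd and nonvanishing conditions. Hence the solution sets of the two equations correspond exactly, as claimed.

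I do not expect a genuine obstacle here, since both of the hard ingredients (that all three variables are odd, and the two parity relations) are already furnished by the preceding lemmas. The only point requiring care is the unit bookkeeping in the middle step: one must check that the two parity relations flip precisely the coefficients of $\alpha'^{2}$ and $\beta'^{2}$ (from $+1,+i$ to $-1,-i$) while leaving the $(1+i)\gamma'^{2}$ term fixed, so that the global sign reversal rearranges cleanly into $\alpha'^{2}+i\beta'^{2}=(1+i)\gamma'^{2}$ rather than some unit twist of it. This is the same subtlety that arose in Theorem \ref{thm19} and its companions, and it is handled identically here.
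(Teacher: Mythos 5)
Your proposal is correct and follows essentially the same route as the paper's own proof: both invoke the two preceding lemmas (oddness of $\alpha\beta\gamma$ modulo $(1+i)$ and the parity relations $m+l\equiv n+l\equiv 1 \bmod 2$), normalize away the unit $i^{l}$, and observe that the resulting sign flips turn the equation into $\alpha'^{2}+i\beta'^{2}=(1+i)\gamma'^{2}$ with $\alpha',\beta',\gamma'\in O^{I}$, with the converse handled by reinserting a unit. Your unit bookkeeping is in fact somewhat more explicit than the paper's, but the argument is identical in substance.
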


\begin{proof}
Let $(\alpha, \beta, \gamma)$ belong to $S$ and $\alpha=i^m \alpha'$, $\beta=i^n \beta'$, $\gamma=i^l \gamma'$, where $m, n, l$ take values $0, 1$ and $\alpha', \beta',\gamma' \in O^I$. As $(\alpha, \beta, \gamma)$ is a solution so we can consider that $(i^{m-l} \alpha',i^{n-l} \beta ', \gamma')$, where $m - l$, $n - l$ take the value $1$ is also a solution of the equation $X^2+iY^2+(1+i)Z^2=0$. According lemma 4.17, $m + l \equiv 1 \mod 2$ and $n + l \equiv 1 \mod 2$ so $(i\alpha', i\beta', \gamma')$ or $(\alpha', \beta', i\gamma')$ is a solution of the equation $X^2+iY^2+(1+i)Z^2=0$ and $(\alpha', \beta', \gamma')$ is a solution of $X^2+iY^2=(1+i)Z^2$  where $X = \alpha', Y = \beta', Z = \gamma'$ from $O^I$. Inversely it is also true.
\end{proof} 
 
 \subsubsection{The solutions of the equation $X^2+iY^2+(1+i)Z^2=0$  in special variables}
 
 \begin{theorem} 
Some solutions of the equation $X^2+iY^2=(1+i)Z^2$, $\gcd(X, Y) \in U$, $XYZ \neq 0$ can be represented in the  parametric form: $X=P^2-(1+i)iQ^2$, $Y=P^2-2(1+i)PQ+(1+ i)iQ^2$, $Z=P^2-2iPQ+(1+i)iQ^2$
where $P, Q$ are products of factors $(1+i)^{\gamma_1},p_2^{\gamma_2},...,p_m^{\gamma_m}$, $p_i \in O^I$ and all $p_i$ are distinct primes, $\gcd(P,Q) = 1$, $Q \equiv 0 \mod (1+i)$.
\end{theorem}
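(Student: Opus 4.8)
The plan is to proceed by the chord (secant) construction, rather than by the difference-of-squares factorization used in the earlier theorems of this section, because the left-hand side $X^2+iY^2$ does \emph{not} split into linear forms over $\mathbb{Z}[i]$: one checks that the only way to kill the cross term in $(\alpha+i^a\beta)(\alpha+i^{a+2}\beta)=\alpha^2+(-1)^{a+1}\beta^2$ produces $\alpha^2\pm\beta^2$, never $\alpha^2+i\beta^2$, equivalently $\sqrt{-i}\notin\mathbb{Z}[i]$. So the naive reduction to a product equation is unavailable and I would instead exploit the base solution $(1,1,1)$, which lies on the conic since $1+i=1+i$. The algebraic engine is the observation that, writing $(1+i)\gamma^2=\gamma^2+i\gamma^2$ and regrouping, any solution $(\alpha,\beta,\gamma)\in S$ satisfies
\[
(\alpha-\gamma)(\alpha+\gamma)=-i(\beta-\gamma)(\beta+\gamma),
\]
which plays here the role that $(\gamma-\alpha)(\gamma+\alpha)=\beta^2$ played for the Pythagorean forms.

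First I would intersect the conic $X^2+iY^2-(1+i)Z^2=0$ with the pencil of lines through $(1,1,1)$: substituting $(1,1,1)+t(a,b,c)$, the constant term vanishes, so the quadratic in $t$ has $t=0$ as one root and a second root $t_0=-2L/Q_0$ with $L=a+ib-(1+i)c$ and $Q_0=a^2+ib^2-(1+i)c^2$. Clearing denominators gives the second intersection point as quadratic forms in the direction; specializing to a one-parameter family and renaming the homogeneous parameters $P,Q$ (with the abbreviation $R:=P-iQ$) produces, after routine simplification,
\[
X=R^2+2iRQ-iQ^2,\qquad Y=R^2-2RQ-iQ^2,\qquad Z=R^2+iQ^2,
\]
which is exactly the stated form rewritten through $P=R+iQ$ and $(1+i)i=i-1$. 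The core verification is then the polynomial identity $X^2+iY^2-(1+i)Z^2\equiv0$ in $\mathbb{Z}[i][P,Q]$, which I would confirm by direct expansion; the two homogeneous end-terms are already pinned down by the specializations $Q=0$ (giving $(1,1,1)$ up to scaling) and $R=0$ (giving $-(1+i)Q^4$ on both sides).

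The remaining work is to check the side conditions. For $\gcd(X,Y)\in U$ I would use the computed combinations $X+Y=2P(P-(1+i)Q)$ and $X-Y=2(1+i)Q(P-iQ)$: any odd prime $p\in O^I(\pi')$ dividing both $X$ and $Y$ divides each factor after cancelling $2(1+i)$, and running each case ($p\mid P$, $p\mid P-(1+i)Q$) back through $X=P^2+(1-i)Q^2$ forces $p\mid\gcd(P,Q)$, contradicting $\gcd(P,Q)=1$. The hypothesis $Q\equiv0\bmod(1+i)$ then handles the even prime: it makes $(1-i)Q^2$ divisible by $(1+i)^3$ while $P$ is odd, so $X$ (and likewise $Y,Z$) is odd, whence $(1+i)\nmid\gcd(X,Y)$. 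Together with $\gcd(P,Q)=1$ this also keeps $XYZ\neq0$ off the degenerate locus.

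I expect the main obstacle to be precisely this coprimality bookkeeping around the ramified prime $(1+i)$ — showing the two quadratic forms share no odd prime and correctly tracking the $(1+i)$-adic valuations forced by $Q\equiv0\bmod(1+i)$. This same point explains why the theorem claims only ``some'' solutions: the chord construction naturally yields projective points with $\mathbb{Q}(i)$-denominators, and only those parameter pairs $P,Q\in G$ with $\gcd(P,Q)=1$ and $Q\equiv0\bmod(1+i)$ clear to genuinely coprime integral triples of the required special type. A full converse, that every element of $S$ arises this way, would require a uniform control of these denominators and is not asserted here.
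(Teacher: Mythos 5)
Your proof is correct, but it takes a genuinely different route from the paper's. The paper never parametrizes this conic directly: its proof of the present theorem is deferred to the end of Section 4 and consists of assembling a chain of auxiliary results. Proposition 4.20 uses the half-sum substitution $m=\frac{\alpha+\gamma}{2}$, $n=\frac{\alpha-\gamma}{2}$, $k=\frac{\gamma+\beta}{2}$, $l=\frac{\gamma-\beta}{2}$ --- resting on exactly the identity $\alpha^2-\gamma^2=i(\gamma^2-\beta^2)$ that you record at the outset and then never actually use --- to convert the equation into the system $x-y=u+v$, $xy=iuv$; Proposition 4.21 and Corollary 4.22 convert that system into the condition that the discriminant $(u+v)^2+4iuv$ be a square $D^2$; Proposition 4.23 then observes that $D^2=(u+v)^2+4iuv$ is an instance of the already-solved conic $X^2+(1-i)Y^2=Z^2$ (take $X=D$, $Y=(1+i)^2v$, $Z=u+(2i+1)v$) and imports its parametrization from Theorem 4.15, which in turn rests on Mordell's lemma; Corollary 4.24 plus Proposition 4.20 then yield the stated formulas. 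You instead produce the formulas by the chord construction through the rational point $(1,1,1)$ and verify everything by hand: the polynomial identity $X^2+iY^2=(1+i)Z^2$ (which does hold, and your change of variables $P=R+iQ$, using $(1+i)i=i-1$, matches the stated forms exactly), and the side conditions via $X+Y=2P(P-(1+i)Q)$, $X-Y=2(1+i)Q(P-iQ)$, together with $X\equiv Y\equiv Z\equiv P^2 \bmod (1+i)^3$, which excludes the prime $1+i$ from $\gcd(X,Y)$ and incidentally gives $XYZ\neq 0$ (an element not divisible by $1+i$ cannot vanish). Since the theorem claims only that \emph{some} solutions take this form, your verification-style argument is fully sufficient and is considerably more self-contained: it needs neither Mordell's lemma nor Theorem 4.15 nor Propositions 4.20--4.24. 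What the paper's longer route buys is a derivation rather than a verification: the formulas emerge from its uniform unique-factorization machinery, and the intermediate equivalences (Propositions 4.20 and 4.21 are stated as ``if and only if'') retain more information, pointing toward a description of all solutions rather than just exhibiting a family. The only soft spots in your write-up are presentational: the passage from the pencil of lines to the specific forms is waved through as ``routine simplification,'' and the final expansion is promised rather than displayed; both are genuinely routine and neither affects correctness.
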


\begin{proposition}
The canonical equation $X^2+iY^2=(1+i)Z^2$, with conditions $\gcd(X, Y) \in U$, $XYZ \neq 0$
has a solution if and only if the system of equations 
\begin{equation*}
\begin{cases}
x - y = u + v \\ 
xy = iuv,
\end{cases}
\end{equation*}
where $x + y, u+v, u - v \in O^I$, $uv \equiv 0 \mod (1+i)$ and $\gcd(x, y)$, $\gcd(u, v) \in U$ has a solution.
\end{proposition}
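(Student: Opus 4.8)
The plan is to exhibit an explicit, mutually inverse change of variables between the two problems and then check that all side conditions correspond. Motivated by the identities $(x+y)^2-(x-y)^2=4xy$ and $(u+v)^2-(u-v)^2=4uv$, I would set
\[
X=x+y,\qquad Y=u-v,\qquad Z=u+v,
\]
with inverse
\[
x=\tfrac{X+Z}{2},\quad y=\tfrac{X-Z}{2},\quad u=\tfrac{Z+Y}{2},\quad v=\tfrac{Z-Y}{2}.
\]
First I would record the purely algebraic equivalence, which handles the ``equation'' part of both implications at once. If $x-y=u+v$ and $xy=iuv$, then using $4uv=(u+v)^2-(u-v)^2=Z^2-Y^2$ one gets
\[
X^2=(x-y)^2+4xy=Z^2+4iuv=Z^2+i(Z^2-Y^2)=(1+i)Z^2-iY^2,
\]
so $X^2+iY^2=(1+i)Z^2$; the computation reverses verbatim, so the system holds if and only if the canonical equation does. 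This isolates the real work into matching the accompanying conditions.

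For the direction ($\Rightarrow$), starting from a solution of $X^2+iY^2=(1+i)Z^2$ with $X,Y,Z\in O^I$, I would first verify integrality: since every element of $O^I$ has real part $\equiv1\pmod4$ and even imaginary part, each of $X\pm Z$ and $Z\pm Y$ has both coordinates even, so $x,y,u,v\in\mathbb{Z}[i]$, and the inverse formulas give $x+y=X$, $u+v=Z$, $u-v=Y$ in $O^I$ together with $x-y=u+v$. The coprimality conditions follow from the equation itself: a nonunit prime $p$ (necessarily odd, as $X,Y,Z$ are odd) dividing two of $X,Y,Z$ divides the third through $X^2+iY^2=(1+i)Z^2$, contradicting $\gcd(X,Y)\in U$; hence $X,Y,Z$ are pairwise coprime, and since $\gcd(x,y)\mid\gcd(X,Z)$ and $\gcd(u,v)\mid\gcd(Y,Z)$ both are units. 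Finally $uv\equiv0\pmod{1+i}$ is the one genuinely arithmetic point: writing $I(Y)=2t_Y$, $I(Z)=2t_Z$, the real parts of $\tfrac{Z+Y}{2}$ and $\tfrac{Z-Y}{2}$ are odd and even respectively, while $t_Y+t_Z$ and $t_Z-t_Y$ share the same parity, so exactly one of $\tfrac{Z+Y}{2},\tfrac{Z-Y}{2}$ is even and the product $uv$ is therefore even.

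For the direction ($\Leftarrow$), I would feed a system solution into $X=x+y$, $Y=u-v$, $Z=u+v$; the equation holds by the identity above, and $X,Y,Z\in O^I$ (hence $XYZ\neq0$) is exactly the hypothesis $x+y,\,u-v,\,u+v\in O^I$. It remains to prove $\gcd(X,Y)\in U$: if an odd prime $p$ divided both $X$ and $Y$, then $p^2\mid(1+i)Z^2$ would force $p\mid Z=u+v$, so $p$ would divide $u+v$ and $u-v=Y$ and hence $\gcd(u,v)\in U$, a contradiction. I expect the main obstacle to be precisely this bookkeeping of congruences and divisibilities: keeping the distinguished prime $1+i$ separate from the odd primes, justifying the divisions by $2$ through the $\bmod 4$ structure of $O^I$ (Lemma \ref{lem2}), and establishing the parity fact that forces $uv\equiv0\pmod{1+i}$. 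By comparison, the algebra converting the quadratic form into the bilinear system is routine.
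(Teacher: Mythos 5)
Your proposal is correct and takes essentially the same route as the paper's own proof: the identical change of variables $x=\tfrac{X+Z}{2}$, $y=\tfrac{X-Z}{2}$, $u=\tfrac{Z+Y}{2}$, $v=\tfrac{Z-Y}{2}$ (the paper's $m,n,k,l$) together with the same algebraic identity converting $xy=iuv$, $x-y=u+v$ into $X^2+iY^2=(1+i)Z^2$. The only difference is one of completeness, in your favor: you actually verify the side conditions (integrality of the half-sums via the $\bmod\ 4$ structure of $O^I$, the parity argument giving $uv\equiv 0 \bmod (1+i)$, and the transfer of the gcd conditions in both directions), whereas the paper merely asserts them.
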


\begin{proof}
Let $(\alpha, \beta, \gamma)$, where $\alpha, \beta, \gamma$ belong $O^I$ and $\gcd(\alpha, \beta, \gamma) = 1$, be a solution of the equation, i.e. $\alpha^2+i\beta^2=(1+i) \gamma^2$. This leads to the following equality $\alpha^2-\gamma^2=i (\gamma^2-\beta^2)$.

Let $m=\frac{\alpha + \gamma}{2}$, $n=\frac{\alpha - \gamma}{2}$, $k=\frac{\gamma + \beta}{2}$,$l= \frac{\gamma - \beta}{2}$,  where $m, n, k, l$ are Gaussian integers, $n \equiv l \equiv 0 \mod (1+ i)$, $\gcd(m, n), gcd(k, l) \in U$. Then $(m, n, k, l)$ is a solution of the system of equations $x - y = u + v, xy = i uv$, where $x + y, u+ v, u - v \in O^I$, $uv \equiv 0 \mod (1+i)$ and $\gcd (x, y), gcd (u, v) \in U$. 

Let $(m, n, k, l)$ is a solution of the system of equations $x - y = u + v, xy = i uv$ where $x + y, x - y, u - v \in O^I$ and $\gcd(x, y), \gcd(u, v) \in U$. Let $m + n = \alpha, k - l = \beta, m - n = k + l = \gamma$ where $\alpha, \beta, \gamma \in O^I$ and $\gcd(\alpha, \gamma) = \gcd (\beta, \gamma) = 1$ then $m=\frac{\alpha + \gamma}{2}, n=\frac{\alpha - \gamma}{2}, k=\frac{\gamma + \beta}{2},l=\frac{\gamma - \beta}{2}$.  Since $mn=i kl$ then $\alpha^2+i\beta^2=(1+i)  \gamma^2$ and $(\alpha, \beta, \gamma)$ is a solution of $X^2+iY^2=(1+i)Z^2$, $\gcd(X, Y)=1$, $XYZ \neq 0$.
\end{proof}

\begin{proposition}
The system of equations 
\begin{equation*}
\begin{cases}
x - y = u + v \\ 
xy = i uv
\end{cases}
\end{equation*}
where $\gcd(x, y), \gcd(u, v) \in U$, $uv \equiv 0 \mod (1+i)$ has a solution over Gaussian integers if and only if the quadratic equation 
$z^2  - (u + v)z - iuv = 0$ has a solution over $\mathbb{Z}[i]$.
\end{proposition}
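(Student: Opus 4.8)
The plan is to eliminate the variable $y$ by means of the linear relation $x - y = u + v$, collapsing the two equations of the system into the single quadratic of the statement. First I would write $y = x - (u + v)$ and substitute into $xy = iuv$, obtaining $x\bigl(x - (u+v)\bigr) = iuv$, i.e. $x^2 - (u+v)x - iuv = 0$. This is precisely $z^2 - (u+v)z - iuv = 0$ with $z = x$. Hence any solution $(x, y)$ of the system (for the given $u, v$) produces at once the root $z = x \in \mathbb{Z}[i]$, which settles the direction ``system solvable $\Rightarrow$ quadratic solvable''.

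For the converse I would start from a root $z \in \mathbb{Z}[i]$ of $z^2 - (u+v)z - iuv = 0$ and set $x = z$, $y = z - (u+v)$. Then $x - y = u + v$ holds by construction, and $xy = z\bigl(z - (u+v)\bigr) = z^2 - (u+v)z = iuv$ by the defining equation, so $(x, y)$ satisfies both equations of the system. The conditions $\gcd(u, v) \in U$ and $uv \equiv 0 \mod (1+i)$ are attached to the coefficients of the quadratic and are simply carried along; the one remaining thing to confirm is $\gcd(x, y) \in U$.

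To confirm $\gcd(x, y) \in U$ I would argue by contradiction using unique factorization in $\mathbb{Z}[i]$: suppose a Gaussian prime $\pi$ divided both $x$ and $y$. Then $\pi \mid xy = iuv$, so $\pi \mid uv$ and hence $\pi \mid u$ or $\pi \mid v$; say $\pi \mid u$. But also $\pi \mid (x - y) = u + v$, whence $\pi \mid v$ as well, contradicting $\gcd(u, v) \in U$. Therefore no such $\pi$ exists and $\gcd(x, y) \in U$, completing the converse.

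I expect the substitution to be entirely routine; indeed, the system is nothing but Vieta's relations asserting that $x$ and $-y$ are the two roots of $z^2 - (u+v)z - iuv = 0$ (their sum being $u+v$ and their product $-iuv$), so the equivalence is structural rather than computational. The only genuine bookkeeping is the preservation of the side condition $\gcd(x, y) \in U$ in the converse, and even this is the short coprimality argument above, powered by $\gcd(u, v) \in U$. Thus the main, and quite modest, obstacle is simply to verify that passing from the quadratic back to the system does not destroy the coprimality requirement, which the hypotheses on $u$ and $v$ guarantee.
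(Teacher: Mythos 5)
Your proof is correct and follows essentially the same route as the paper: both directions rest on the Vieta correspondence, your $x = z$, $y = z - (u+v)$ being exactly the paper's $x_0 = z_1$, $y_0 = -z_2$ (since the second root of the quadratic is $(u+v) - z_1$). Your write-up is in fact slightly more complete, because the paper merely asserts $\gcd(x,y) \in U$ in the converse direction, whereas you verify it via the prime-divisor argument from $\gcd(u,v) \in U$.
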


\begin{proof}
Let $(x_0, y_0,u_0  , v_0)$ be a solution of the system equations then $x_0$ is a solution of the equation 
$z^2  - (u_0  + v_0)z - iu_0 v_0= 0$.

Let $u_0$, $v_0$ such that the equation $z^2  - (u_0  + v_0)z - iu_0 v_0= 0$ has the solutions over $\mathbb{Z}[i]$ then $z_1=\frac{ u_0 + v_0 + \sqrt{(u_0 + v_0)^2 + 4iu_0v_0}}{2}, z_2=\frac{ u_0 + v_0 - \sqrt{(u_0 + v_0)^2 + 4iu_0v_0}}{2}$, $z_1z_2 = -iu_0v_0$.

Then $(x_0 = z_1, y_0 = - z_2, u_0, v_0)$ is a solution of the system of equations $x - y = u + v, xy = iuv$, where $\gcd(x, y)$ belong to $U$.
\end{proof}

\begin{corollary}
Let a discriminant $D$ of a quadratic equation 
$z^2  - (u + v)z - iuv = 0$, where $u + v \in O^I$, $uv \equiv 0 \mod (1+i)$ and $\gcd(u, v) \in U$, be a square in $\mathbb{Z}[i]$ then the system of equations
\begin{equation*}
\begin{cases} 
x - y = u + v \\
xy = iuv
\end{cases}
\end{equation*}
where $\gcd(x, y) \in U$, has a solution.
\end{corollary}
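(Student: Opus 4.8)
The plan is to reduce the corollary to the proposition proved immediately before it, which already shows that the system $x-y=u+v$, $xy=iuv$ (with the stated side conditions) has a solution over $\mathbb{Z}[i]$ if and only if the quadratic $z^2-(u+v)z-iuv=0$ has a root in $\mathbb{Z}[i]$. So it suffices to prove that the hypothesis ``$D$ is a square'' already forces the quadratic to be solvable in $\mathbb{Z}[i]$. Writing $D=(u+v)^2+4iuv=w^2$ with $w\in\mathbb{Z}[i]$, the quadratic formula gives the two candidate roots
\[
z_{1,2}=\frac{(u+v)\pm w}{2}.
\]
The only possible obstruction to these being Gaussian integers is the division by $2$, so the heart of the matter is to show $(u+v)\pm w\equiv 0\mod 2$, i.e. that both $R\big((u+v)\pm w\big)$ and $I\big((u+v)\pm w\big)$ are even.

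First I would pin down the residue of $D$ modulo $4$. Since $u+v\in O^I\subset O_I$, Lemma \ref{lem2} gives $R\big((u+v)^2\big)\equiv 1\mod 4$ and $I\big((u+v)^2\big)\equiv 0\mod 4$. Writing $4iuv$ in coordinates, $R(4iuv)=-4I(uv)\equiv 0\mod 4$ and $I(4iuv)=4R(uv)\equiv 0\mod 4$, so from $D=(u+v)^2+4iuv$ I conclude $R(D)\equiv 1\mod 4$ and $I(D)\equiv 0\mod 4$.

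Next I would read off the parity class of $w$ from that of $w^2=D$. If $w$ were even, Lemma \ref{lem1} would force $R(w^2)\equiv 0\mod 4$, contradicting $R(D)\equiv 1\mod 4$; hence $w\in O$. Among the two odd classes, Lemma \ref{lem2} gives $R(w^2)\equiv -1\mod 4$ for $w\in O_0$ and $R(w^2)\equiv 1\mod 4$ for $w\in O_I$, so $R(D)\equiv 1\mod 4$ pins $w\in O_I$. Thus $R(w)$ is odd and $I(w)$ is even. Since $u+v\in O_I$ likewise has $R(u+v)$ odd and $I(u+v)$ even, the real part $R\big((u+v)\pm w\big)$ is a sum of two odd integers and the imaginary part $I\big((u+v)\pm w\big)$ a sum of two even integers; both are even. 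Therefore $(u+v)\pm w\equiv 0\mod 2$ and $z_1,z_2\in\mathbb{Z}[i]$.

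Having produced Gaussian-integer roots of $z^2-(u+v)z-iuv=0$, I would invoke the preceding proposition to conclude that the system $x-y=u+v$, $xy=iuv$ has a solution with $\gcd(x,y)\in U$; concretely one takes $x=z_1$, $y=-z_2$, so that $x-y=z_1+z_2=u+v$ and $xy=-z_1z_2=iuv$ by Vieta's relations. The main obstacle is exactly the divisibility-by-$2$ step: everything hinges on matching the parities of $u+v$ and $w$, and the congruence lemmas \ref{lem1} and \ref{lem2} are precisely what deliver it. I note that only the weaker membership $u+v\in O_I$ is actually used, so the argument does not secretly rely on the full strength of $u+v\in O^I$.
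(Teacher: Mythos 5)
Your proof is correct and takes essentially the same route as the paper's: both arguments hinge on showing that the square root of the discriminant lies in the same parity class as $u+v$ (odd real part, even imaginary part), so that the quadratic-formula division by $2$ stays in $\mathbb{Z}[i]$, after which $x,y$ are produced by the Vieta construction underlying the preceding proposition. The only difference is presentational: the paper compresses your mod-$4$ analysis into the single claim that $D^2\equiv\gamma^2 \bmod (1+i)^4$ forces $D=\pm D'$ with $D'\in O^I$, whereas your appeal to Lemmas \ref{lem1} and \ref{lem2} spells out exactly that parity-matching step (and correctly observes that the weaker membership in $O_I$ suffices).
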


\begin{proof}
A discriminant of the quadratic equation has the form $D^2=(u+v)^2+4iuv$.  Since $\gamma = u + v$ belongs to $O^I$ and $D^2 \equiv \gamma^2  \mod (1+i)^4$ hence $D = \pm D'$, where $D' \in O^I$. Thus $x=\frac{\gamma+D'}{2}$, $y=\frac{-\gamma+D'}{2}$, where $x, y$ are Gaussian integers and $\gcd(x, y) \in U$.
\end{proof}

\begin{proposition}
The solutions of the equation $D^2=(u+v)^2+4iuv$, where $D$ and $u + v \in O^I$, $\gcd(u, v) \in U$ and $v\equiv 0 \mod(1+i)$, have the parametric form: $D = P^2 + (1-i) Q^2$, $u = P^2 -(1- i) Q^2-(2i+1)PQ$ and $v=PQ$ where $P, Q$ are the products of factors $(1+i)^{\gamma_1},p_2^{\gamma_2},...,p_m^{\gamma_m}$, $p_i \in O^I$ and all $p_i$  are distinct primes, $\gcd(P, Q) =1$, $Q \equiv 0 \mod (1+i)$.
\end{proposition}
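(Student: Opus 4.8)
The plan is to treat this as one more ``solutions in parametric form'' statement and to run the scheme used throughout this section: a linear change of variables converts the single quadratic relation into a product equation of the exact shape handled by Mordell's Lemma (Lemma \ref{lemMord}), after which the two factors are forced to be units times perfect squares and the parametrisation is read off. The forward direction is the easy half: substituting $D = P^2 + (1-i)Q^2$, $u = P^2 - (1-i)Q^2 - (2i+1)PQ$, $v = PQ$ into $D^2 - (u+v)^2 - 4iuv$ and expanding, one checks that every term cancels, so the triple is a solution. It then only remains to see that the side conditions $\gcd(P,Q)=1$, $Q \equiv 0 \bmod (1+i)$ are equivalent to $\gcd(u,v)\in U$, $v \equiv 0 \bmod(1+i)$ and $D, u+v \in O^I$; this is routine since $P$ is odd and coprime to $Q$.

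For the converse, start from a solution and put $a = \tfrac{D-(u+v)}{2}$, $b = \tfrac{D+(u+v)}{2}$. These are Gaussian integers because $D$ and $u+v$ both lie in $O^I$ and hence have the same parity, and they satisfy $a+b=D$, $b-a=u+v$. The relation $D^2=(u+v)^2+4iuv$ becomes $(a+b)^2-(b-a)^2=4iuv$, that is, $ab=iuv$. Now set $X = b+iv$ and $Y = a-iv$. A short computation using $ab=iuv$ and $a-b=-(u+v)$ gives $X+Y=D$ together with
\[
XY = ab + iv(a-b) + v^2 = iuv - iv(u+v) + v^2 = (1-i)v^2 .
\]
Thus the solution $(D,u,v)$ yields a solution of $XY=(1-i)v^2$ with $X+Y=D$, which is precisely the input for Lemma \ref{lemMord} once $\gcd(X,Y)\in U$ has been deduced from $\gcd(u,v)\in U$.

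Applying Lemma \ref{lemMord} to $XY=(1-i)v^2$ forces $X$ and $Y$ to be, up to units and a splitting of the constant $1-i$, perfect squares with $v=PQ$. Using the normalisation afforded by $O^I$ (Theorem \ref{thm11}) to pin down the residual unit $i^{t}$ exactly as in the earlier proofs, one arrives at $X=P^2$ and $Y=(1-i)Q^2$. Then $D = X+Y = P^2+(1-i)Q^2$ and $v=PQ$, while recovering $u$ from $u = (b-a)-v = (X-Y-2iv)-v$ gives $u = P^2-(1-i)Q^2-(2i+1)PQ$, the asserted form; the factor structure of $P,Q$, the coprimality $\gcd(P,Q)=1$ and $Q\equiv 0 \bmod(1+i)$ all follow from the conclusion of Mordell's Lemma together with the evenness of $v$.

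The main obstacle is the constant $1-i = -i(1+i)$: it shares the prime $(1+i)$ with $v$, so $\gcd(1-i,v)$ is not a unit and Lemma \ref{lemMord} does not apply verbatim. I expect to isolate the odd part $w$ of $v$, writing $v=(1+i)^s w$, and apply the lemma to $XY = \bigl(-i(1+i)^{2s+1}\bigr)w^2$, where the constant is now coprime to $w$; the odd total power $2s+1$ of $(1+i)$ then distributes as an even power into $X=P^2$ and an odd power into $Y=(1-i)Q^2$, which is exactly what permits the clean split. Tracking these $(1+i)$-powers alongside the unit $i^{t}$, and confirming $\gcd(X,Y)\in U$, is the delicate bookkeeping; once it is settled the parametrisation drops out and the stated equivalence holds in both directions.
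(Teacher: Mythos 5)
Your route is genuinely different from the paper's. The paper never applies Mordell's lemma directly to this proposition: it observes that a solution of $D^2=(u+v)^2+4iuv$ yields the solution $X=D$, $Y=(1+i)^2v$, $Z=u+(2i+1)v$ of the already-solved equation $X^2+(1-i)Y^2=Z^2$, imports the parametrization from the theorem cited there as Theorem 4.15 (the parametric theorem following Theorem \ref{thm19}), and then pins the residual unit by the condition $D,P\in O^I$ (``hence $t=3$''). That reduction is precisely what lets the paper sidestep the obstacle you ran into: in the proof of that earlier theorem the constant $1\pm i$ is absorbed into the denominator of the substitution ($x=\frac{\gamma-\alpha}{(1\pm i)(1+i)^2}$, $y=\frac{\gamma+\alpha}{(1+i)^2}$), so the product equation reads $xy=z^2$ with constant $1$ and the $\gcd$-hypothesis of Lemma \ref{lemMord} holds automatically. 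Your substitution $a=\frac{D-(u+v)}{2}$, $b=\frac{D+(u+v)}{2}$, $X=b+iv$, $Y=a-iv$, giving $X+Y=D$ and $XY=(1-i)v^2$, is correct and elegant, but it leaves the constant $1-i$ on the right-hand side, where it collides with $v\equiv 0 \bmod (1+i)$.

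The gap is that you never resolve this collision: you sketch the fix ($v=(1+i)^sw$) and defer ``the delicate bookkeeping,'' but that bookkeeping is the entire mathematical content of the converse direction. Two things would have to be proved: (i) the coprimality clause in the conclusion of Lemma \ref{lemMord} forces the whole block $(1+i)^{2s+1}$ into a single factor (this part is indeed fine), and (ii) the residual unit $i^t$ can be normalized so that exactly $X=P^2$ and $Y=(1-i)Q^2$ with $P,Q\in G$. Step (ii) is not routine, and in the stated generality it cannot be carried out, because the proposition as written fails when $v$ is exactly divisible by $(1+i)$: take $(D,u,v)=(-3-2i,\,-5i,\,1+i)$. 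All hypotheses hold ($D\in O^I$, $u+v=1-4i\in O^I$, $\gcd(u,v)\in U$, $v\equiv 0\bmod(1+i)$), and $v=1+i$ forces $P=1$, $Q=1+i$, yet $P^2+(1-i)Q^2=3+2i=-D$; the sign cannot be absorbed into $P,Q\in G$, so the $O^I$-conditions pin the unit to $D=-(P^2+(1-i)Q^2)$ rather than the claimed form. (This defect is shared by the paper itself, whose one-line assertion ``since $D$ and $P$ belong to $O^I$ then $t=3$'' fails on the same example; the statement is correct only up to such a unit, or under the stronger hypothesis $v\equiv 0\bmod(1+i)^2$.) In short: your reduction to $XY=(1-i)v^2$ and the forward verification are sound, but the proof stops exactly at the step where the difficulty --- yours and the paper's alike --- is concentrated.
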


\begin{proof}
Let the equation $D^2=(u+v)^2+4iuv$  have a solution then the equation  $X^2+(1-i)Y^2=Z^2$, $\gcd(X, Y,Z)=1$, $XYZ \neq 0$ also has a solution $X = D$, $Y=(1+i)^2 v$, $Z = u + (2i+1)v$. According to theorem 4.15, the solution of the equation has a form $X = i^{1+t}(P^2-(-1)^t (1-i) Q^2)$, $Y = (1+ i)^2 PQ$, $Z = i^{1+t}(P^2+(-1)^t (1-i) Q^2)$ where $P, Q$ are the products of the factors $(1+i)^{\gamma_1},p_2^{\gamma_2},...,p_m^{\gamma_m}$,   $p_i \in O^I$ and all $p_i$  are distinct primes, $\gcd(P, Q) = 1$, $Q \equiv 0 \mod (1+i)$. Thus the solution of the initial equation has the form $D = i^{1+t}(P^2-(-1)^t (1-i)Q^2), u = i^{1+t} (P^2+(-1)^t (1-i)Q^2)-(2i+1)PQ, v=PQ$. Since $D$ and $P$ belong to $O^I$ then $t=3$. Thus we have $D=P^2 + (1-i)Q^2$, $u = P^2 - (1-i)Q^2 - (2i + 1)PQ$, $v = PQ$.
\end{proof}

\begin{corollary}
A parametric presentation of the solutions of the system of equations 
\begin{equation*}
\begin{cases}
x - y = u + v \\ 
xy = i uv
\end{cases}
\end{equation*}
where $\gcd(x, y), gcd(u, v) \in U$ has the form $x = P^2- iPQ$, $u=P^2 - (1-i) Q^2-(2i+1)PQ$,  $y = (1-i) Q^2+iPQ$,  $v = PQ$ where $P, Q$ are the products of the factors $(1+i)^{\gamma_1},p_2^{\gamma_2},...,p_m^{\gamma_m}$,   $p_i \in O^I$ and all $p_i$  are distinct primes, $\gcd(P, Q) = 1$, $Q \equiv 0 \mod (1+i)$.
\end{corollary}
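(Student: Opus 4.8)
The plan is to assemble this corollary purely by substitution from the three immediately preceding results, since all the genuine content has already been established. The ingredients are: (i) the corollary on the discriminant, which tells us that whenever $D^2=(u+v)^2+4iuv$ is a square, a solution of the system with $\gcd(x,y)\in U$ is given by
\begin{equation*}
x=\frac{(u+v)+D'}{2},\qquad y=\frac{-(u+v)+D'}{2},
\end{equation*}
where $D'\in O^I$ is the representative of $\sqrt{D^2}$, and these quotients are guaranteed to be Gaussian integers with $\gcd(x,y)\in U$; and (ii) the preceding proposition, which supplies the parametrization $D'=P^2+(1-i)Q^2$, $u=P^2-(1-i)Q^2-(2i+1)PQ$, $v=PQ$ with $\gcd(P,Q)=1$ and $Q\equiv 0\mod(1+i)$. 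Feeding (ii) into (i) is all that remains.

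First I would compute the quantity $u+v$ appearing in both formulas:
\begin{equation*}
u+v=P^2-(1-i)Q^2-(2i+1)PQ+PQ=P^2-(1-i)Q^2-2iPQ,
\end{equation*}
using $-(2i+1)+1=-2i$. Substituting this together with $D'=P^2+(1-i)Q^2$ into the two displayed expressions, the $(1-i)Q^2$ terms cancel in $x$ while the $P^2$ terms cancel in $y$, and after dividing by $2$ one obtains
\begin{equation*}
x=\frac{2P^2-2iPQ}{2}=P^2-iPQ,\qquad y=\frac{2(1-i)Q^2+2iPQ}{2}=(1-i)Q^2+iPQ,
\end{equation*}
which are exactly the asserted forms. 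The parametrizations of $u$ and $v$ are simply carried over verbatim from the preceding proposition, completing the stated presentation.

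The algebra above is routine; the step demanding care is the bookkeeping of the square root $D$. The quadratic $z^2-(u+v)z-iuv=0$ has two roots with product $-iuv$, and the discriminant corollary pins $D$ down to the branch lying in $O^I$ so that both $x$ and $y$ land in $\mathbb{Z}[i]$. One must check that the representative forced by the previous proposition (the choice $t=3$ giving $D=+(P^2+(1-i)Q^2)$ rather than its negative) is indeed this $O^I$-branch; the opposite sign would negate and swap the roles of $x$ and $y$. I expect the main obstacle to be verifying that these sign conventions are consistent across the three results, and confirming that $\gcd(x,y)\in U$ and $\gcd(u,v)\in U$ genuinely follow from $\gcd(P,Q)=1$ and $Q\equiv 0\mod(1+i)$, rather than any new difficulty in the derivation itself.
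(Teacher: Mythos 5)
Your proposal is correct and follows exactly the paper's route: the paper's own proof is the single line that the corollary is a direct consequence of the proposition parametrizing $D^2=(u+v)^2+4iuv$ and the discriminant corollary giving $x=\frac{(u+v)+D'}{2}$, $y=\frac{-(u+v)+D'}{2}$. Your explicit substitution computing $u+v=P^2-(1-i)Q^2-2iPQ$ and hence $x=P^2-iPQ$, $y=(1-i)Q^2+iPQ$ simply fills in the routine algebra the paper leaves implicit.
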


\begin{proof}
Corollary 4.24 are direct consequence of proposition 4.23 and corollary 4.22.
\end{proof}

\newpage

\begin{proof}[Proof of theorem 4.19]
Theorem 4.19 is a direct consequence of corollary 4.24 and proposition 4.20.
\end{proof}

\section{Conclusion}

In the work we have developed one of the feasible algebraic methods of studying of Diophantine quadratic equations maximizing algebraic properties of the ring $\mathbb{Z}[i]$.

Our approach differs from others similar (see \cite{Sexauer:1966aa,Kubota:1972aa}) due to using the fundamental theorem arithmetic \cite{Sierpinski:1988aa} and Mordell's lemma \cite{Mordell:1969aa} are adapted to solving of Diophantine quadratic equations in three variables over the ring of Gaussian integers.

We intend to apply the method offered and results obtained to studying of the quartic equations $aX^4+bX^2$ $Y^2+cY^4=dZ^2$ over the ring $\mathbb{Z}[i]$, including as new equations $X^4+6X^2 Y^2+Y^4=Z^2$ (Mordell's equation over the ring $\mathbb{Z}[i]$), $X^4+6(1+i)X^2 Y^2+2iY^4=Z^2$ and $X^4\pm Y^4=(1+i)Z^2$ as well as the equations $X^4+Y^4=Z^2$ (Fermat - Hilbert) and $X^4\pm Y^4=iZ^2$ (Szab{\'o} - Najman).

\bibliographystyle{IEEEtran}
\bibliography{references}

\end{document}